\numberwithin{equation}{section}
\newtheorem{teo}{Theorem}[section]
\newtheorem{lem}[teo]{Lemma}
\theoremstyle{definition}
\newtheorem{rem}[teo]{Remark}
\numberwithin{equation}{section}
\def\a{\alpha}
\def\b{\beta}
\def\l{\lambda }
\def\g{\gamma }
\def\k{\kappa}
\def\R{\mathbb{R}}
\def\NN{\mathbb{N}}
\def\ZZ{\mathbb{Z}}
\def\d{\delta}
\def\s{\sigma}
\begin{document}


\title[On Gagliardo-Nirenberg type inequalities]{On Gagliardo-Nirenberg type inequalities}

\author[V.I. Kolyada and F.J. P\'erez L\'azaro]{V.I. Kolyada and F.J. P\'erez L\'azaro}

\address{Department of Mathematics\\ Karlstad University\\ Universitetsgatan 1
\\651 88 Karlstad\\ Sweden}
\email{viktor.kolyada@kau.se}

\address{Departamento de Matem\'aticas y
Computaci\'on\\Universidad de La Rioja\\ Edif. Vives, c. Luis de
Ulloa\\ 26004 Logro\~no\\Spain}

\email{javier.perezl@unirioja.es}

\thanks{Research of the second named author was partially supported by Grant MTM2012-36732-C03-02 of the D.G.I. of Spain}

\keywords{Gagliardo-Nirenberg inequality;  Sobolev spaces; Besov
spaces; Triebel-Lizorkin spaces.}

\begin{abstract}
We present a Gagliardo-Nirenberg inequality which bounds  Lorentz
norms of the function by Sobolev norms and homogeneous Besov
quasinorms with negative smoothness. We prove also other versions
involving Besov or Triebel-Lizorkin quasinorms. These inequalities can be considered as refinements of Sobolev type
embeddings. They can also be applied to obtain Gagliardo-Nirenberg
inequalities in some limiting cases. Our methods are based on
estimates of rearrangements in terms of heat kernels. These methods
enable us to cover also the case of Sobolev norms with $p=1$.

\end{abstract}

\subjclass[2010]{Primary 46E35, 26D10; Secondary 46E30 }

\maketitle

\date{}

\maketitle

\section{Introduction}
In this paper we establish Gagliardo-Nirenberg type inequalities for
Sobolev, Besov and Triebel-Lizorkin spaces.

Recently, some Gagliardo-Nirenberg inequalities have been developed
as a refinement of Sobolev inequalities. Let $f$ be a function on
$\mathbb{R}^n$ such that its distribution function $\l_f(y)$ is
finite. The Gagliardo-Nirenberg-Sobolev  embedding theorem assures
that
\begin{equation}\label{Sobolev_p1}
\|f\|_{n/(n-1)}\le c\|\nabla f\|_1,
\end{equation}
where $c$ only depends on $n$. In the works of Cohen-Meyer-Oru
\cite{CMO}, Cohen-DeVore-Petrushev-Xu \cite{CDPX},
Cohen-Dahmen-Daubechies-DeVore \cite{CDDD} it is proved that
\begin{equation}\label{sobolev_improved}
\|f\|_{n/(n-1)}\le c \|\nabla f\|_1^{\frac{n-1}{n}}\|f\|^{\frac{1}{n}}_{\dot{B}^{-(n-1)}_{\infty,\infty}},
\end{equation}
where $\dot{B}^{-(n-1)}_{\infty,\infty}$ is the homogeneous Besov
space of indices $(-(n-1),\infty,\infty)$. This improved Sobolev
inequality is easily seen to be sharper than (\ref{Sobolev_p1})
(indeed, inequality (\ref{P_h}) below imply $L^{n/(n-1)}\subset \dot{B}^{-(n-1)}_{\infty,\infty}$). Inequality (\ref{sobolev_improved}) presents an
additional feature: it is invariant under the Weil-Heisenberg group
action (see \cite{CMO}). The proof of (\ref{sobolev_improved}) in
\cite{CMO,CDPX,CDDD} is based on wavelet decompositions together
with weak-$\ell^1$ type estimates and interpolation results.

Ledoux \cite{Led} extended inequality (\ref{sobolev_improved}). He
proved that for any $f\in W_p^1(\mathbb{R}^n)$
\begin{equation}\label{ineq_ledoux}
\|f\|_q\le c\|\nabla f\|_p^\theta \|f\|^{1-\theta}_{\dot{B}^{\theta/(\theta-1)}_{\infty,\infty}},\quad 1\le p <q<\infty,\quad \theta= p/q.
\end{equation}
His approach relied on pseudo-Poincar\'e inequalities for heat
kernels.

In particular, inequality (\ref{ineq_ledoux}) gives a refinement of
the Sobolev embedding
\begin{equation}\label{sobolev_con_p}
\|f\|_{np/(n-p)}\le c\|\nabla f\|_p\quad 1\le p<n.
\end{equation}

Afterwards, Mart\'in and Milman \cite{MM} proved an estimate based
on non-increasing rearrangements:
\begin{equation*}
f^{**}(s)\le c|\nabla f|^{**}(s)^{\frac{|\alpha|}{1+|\alpha|}} \|f\|^\frac{1}{1+|\alpha|}_{\dot{B}^\alpha_{\infty,\infty}}, \quad \alpha<0.
\end{equation*}
(here $f^{**}(s)=\frac{1}{s}\int_0^s f^*(t)dt$ and $f^*$ is the
non-increasing rearrangement of $f$). This estimate implies
(\ref{ineq_ledoux}) for $p>1.$ However, since the operator $f\mapsto
f^{**}$ is not bounded in $L^1$, the important case $p=1$ is
unclear.

In this paper we extend inequality (\ref{ineq_ledoux}) to stronger
Lorentz quasinorms  and higher order derivatives . It is well known
that the Sobolev inequality (\ref{sobolev_con_p}) can be improved in
terms of Lorentz spaces. Namely, let $r\in\NN,\,\,1\le r<n, \,\,
1\le p<n/r,$ and let $p^*=np/(n-rp).$ Then for any function $f\in
W_p^r(\R^n)$
\begin{equation}\label{Sobolev_Lorentz}
\|f\|_{p^*,p}\le c\|\mathcal{D}^r f\|_p,
\end{equation}
where
$$
\mathcal{D}^rf(x)=\sum_{|\nu|=r}|D^\nu f(x)|.
$$

We prove that for the same values of parameters,
\begin{equation}\label{refinement}
\|f\|_{p^*,p}\le \|\mathcal{D}^rf\|_p^{1-pr/n}\|f\|_{\dot{B}_{\infty,p}^{r-n/p}}^{pr/n},\quad 1\le p<\frac{n}{r}
\end{equation}
 (see Theorem \ref{Sobolev2} below). This is a refinement of
 (\ref{Sobolev_Lorentz}).

Inequality (\ref{refinement}) is a special case of one of our main
results, Theorem \ref{Sobolev1}. This theorem states the following.
Let $1\le p_1,p_2\le \infty$ and $1\le q_1,q_2\le \infty$. Assume
that $p_1\neq p_2$, $q_1=1$ if $p_1=1$, and $q_i=\infty$  if
$p_i=\infty$ ($i=1,2$). Let $r\in\NN,$ $s<0$ and set
$\theta=r/(r-s)$. Let
\begin{equation*}
\frac{1}{p}=\frac{1-\theta}{p_1}+\frac{\theta}{p_2},\qquad \frac{1}{q}=\frac{1-\theta}{q_1}+\frac{\theta}{q_2}.
\end{equation*}
Then,  for any function $f\in W^r_{p_1,q_1}(\R^n)\cap
\dot{B}^s_{p_2,q_2}(\R^n)$,
\begin{equation}\label{des_main}
 \|f\|_{p,q}\le c
\|\mathcal{D}^rf\|_{p_1,q_1}^{1-\theta}\|f\|_{\dot{B}^s_{p_2,q_2}}^\theta,
\end{equation}
where $c$ doesn't depend on $f.$

It is obvious that (\ref{ineq_ledoux}) can be obtained as a special case
of (\ref{des_main}).

We emphasize that the proof of this result is straightforward and
uses only elementary reasonings. In particular, it doesn't use the
Littlewood-Paley theory.  On the other hand, this theory establishes
the equivalence between Sobolev spaces $W_p^r$ and Lizorkin-Triebel
spaces $F_{p,2}^r$ for $1<p<\infty.$ Therefore for $p_1>1$ Theorem
\ref{Sobolev1} can be partly derived from Gagliardo-Nirenberg
inequalities which we prove for Triebel-Lizorkin and Besov spaces.
We shall briefly describe these results.

First we observe that limiting embeddings into Lorentz spaces
similar to (\ref{Sobolev_Lorentz}) hold also for Besov spaces. Let
$0<r<\infty,\,\,1\le p<n/r, 1\le q\le \infty,$ and let
$p^*=np/(n-rp).$ Then for any function $f$ in the Besov space
$B_{p,q}^r(\R^n)$
\begin{equation}\label{Herz-Peetre}
\|f\|_{p^*,q}\le c\|f\|_{\dot{B}_{p,q}^r}.
\end{equation}
(see \cite{Herz}, \cite{Pe}). In the case $p=q$ a refinement of this
inequality was proved by Bahouri and Cohen \cite{BC}. Namely, they
proved that if $1\le p<n/r$ ($r>0$) and  $p^*=np/(n-rp),$ then
\begin{equation}\label{Bah-Cohen}
\|f\|_{p^*,p}\le c \|f\|_{\dot{B}^r_{p,p}}^{1-pr/n}\|f\|_{\dot{B}_{\infty,p}^{r-n/p}}^{pr/n}.
\end{equation}

In section 6 below  we prove various inequalities similar to
(\ref{des_main}), in which the quasinorms in the right-hand side are
both of Besov type (see Theorem \ref{teoBB}),  or both of
Triebel-Lizorkin-Lorentz type (Theorem \ref{teoFF}),  or represent a
mixture involving Besov and Triebel-Lizorkin types (Theorems \ref{tFB}
and \ref{last}). The exact conditions on the parameters will be
specified in these theorems; here we consider only some special
cases.

An important special case of  Theorem \ref{teoBB} is inequality
(\ref{Bah-Cohen}) and, more generally, a refinement of inequality
(\ref{Herz-Peetre}) for all $1\le q\le \infty,$ that is,
$$
\|f\|_{p^*,q}\le c \|f\|_{\dot{B}^r_{p,q}}^{1-pr/n}\|f\|_{\dot{B}_{\infty,q}^{r-n/p}}^{pr/n}.
$$

Further, Ledoux \cite{Led} observed that inequality
(\ref{ineq_ledoux}) implies some limiting cases of
Gagliardo-Nirenberg inequalities. To be more concrete,
(\ref{ineq_ledoux}) implies
\begin{equation*}
\|f\|_q\le c\|\nabla f\|_p^{p/q}\|f\|_r^{1-p/q}, \quad 1\le p<q<\infty,\,\,\frac{1}{q}=\frac{1}{p}-\frac{r}{qn}.
\end{equation*}

Other examples of limiting cases of Gagliardo-Nirenberg inequalities
were proved by Wadade \cite{Wa}. Similar inequalities to
\cite[Theorem 1.1 and Corollary 1.2]{Wa} can be deduced as
consequences of theorems \ref{last}, \ref{teoFF}, and transitivity of
embeddings (see Remark \ref{remark_wadade}). That is, let
$1<p<q<\infty$, $0<r,\rho <\infty$. Then the following inequalities
hold:
\begin{equation}\label{wadade1}
\|f\|_{q}\le c \|f\|_{\dot{B}^{n/r}_{r,\rho}}^{1-p/q}\|f\|_p^{p/q}
\end{equation}
and
\begin{equation}\label{wadade2}
\|f\|_{q}\le c \|f\|_{\dot{F}^{n/r}_{r,\infty}}^{1-p/q}\|f\|_p^{p/q}.
\end{equation}
Here $\dot{F}^{n/r}_{r,\infty}$ denotes the corresponding
homogeneous Triebel-Lizorkin quasinorm. Let us remark that, in spite
of inequalities (\ref{wadade1}) and (\ref{wadade2}) seem the same as
those in \cite{Wa}, the range of the parameters $p$, $q$, $r$,
$\rho$ where they hold is different. Thus the behaviour of the
constants $c$ is rather different.

The paper is organized as follows. Section 2 contains definitions
and some basic results which are used in the sequel. In Section 3 we
give auxiliary propositions which we apply in Gagliardo-Nirenberg
inequalities involving Sobolev norms. These inequalities are proved
in Section 4. Section 5 contains auxiliary propositions  for
inequalities involving Triebel-Lizorkin and Besov norms. These
inequalities are proved in Section 6.

Our approach is based on estimates of rearrangements in terms of
heat kernels and derivatives.
 We use truncations and corresponding  decompositions (cf. \cite{PW}) to deal with the important case of Sobolev norm in $L^1$. Also, transitivity of embeddings is applied to obtain some results.

\section{Definitions and basic properties}

 Denote by $S_0(\mathbb{R}^n)$ the class of all measurable and
almost everywhere finite functions $f$ on $\mathbb{R}^n$ such that
for each $y>0$
\begin{equation*}
\lambda_f (y) \equiv | \{x \in \mathbb{R}^n : |f(x)|>y \}| <
\infty.
\end{equation*}

A non-increasing rearrangement of a function $f \in
S_0(\mathbb{R}^n)$ is a non-increasing function $f^*$ on
$\mathbb{R}_+ \equiv (0, + \infty)$ such that for any $y>0$
\begin{equation} \label{equi}
|\{t\in \mathbb{R}_+ : f^*(t)>y\}|= \lambda_f (y).
\end{equation}
We shall assume in addition that the rearrangement $f^*$ is left
continuous on $(0,\infty).$ Under this condition it is defined
uniquely by
$$
f^*(t)=\inf\{y>0: \lambda_f (y)<t\}, \quad 0<t<\infty.
$$
For any $t>0$ and any $f, g\in S_0(\mathbb{R}^n)$
\begin{equation*}
(f+g)^*(2t)\le f^*(t)+g^*(t).
\end{equation*}
The following relation holds \cite[Ch. 5]{SW}
\begin{equation} \label{sup}
\sup_{|E|=t} \int_E |f(x)| dx = \int_0^t f^*(u) du \,.
\end{equation}
In what follows we denote
\begin{equation*}
f^{**}(t)= \frac{1}{t} \int_0^t f^*(u) du.
\end{equation*}
By (\ref{sup}), the operator $f\mapsto f^{**}$ is subadditive,
$$
(f+g)^{**}(t)\le f^{**}(t)+g^{**}(t).
$$

 Let $0<p,r<\infty.$ A function $f \in S_0(\mathbb{R}^n)$ belongs
to the Lorentz space $L^{p,r}(\mathbb{R}^n)$ if
\begin{equation*}
\|f\|_{p,r} \equiv \left( \int_0^\infty \left( t^{1/p} f^*(t)
\right)^r \frac{dt}{t} \right)^{1/r} < \infty.
\end{equation*}
For $0<p\le \infty,$ the space $L^{p,\infty}(\mathbb{R}^n)$ is defined
as the class of all $f \in S_0(\mathbb{R}^n)$ such that
$$
\|f\|_{p,\infty} \equiv \sup_{t>0}t^{1/p} f^*(t)<\infty.
$$
We have that $\|f\|_{p,p}=\|f\|_p.$ Further, for a fixed $p$, the
Lorentz spaces $L^{p,r}$ strictly increase as the secondary index
$r$ increases (see \cite[Ch. 4]{BSh}).

We shall use also an alternative expression of Lorentz quasinorms
\begin{equation}\label{lor_alt}
\|f\|_{p,r}=\left(p\int_0^\infty
y^{r-1}\l_f(y)^{r/p}\,dy\right)^{1/r}, \quad 0<p,r<\infty
\end{equation}
(see \cite[Proposition 1.4.9]{Graf}).

Let $1\le p\le \infty$ and $r\in\NN.$ Denote by $W_p^r(\R^n)$ the
Sobolev space of functions $f\in L^p(\R^n)$ for which all weak
derivatives $D^\nu f$ $(\nu=(\nu_1,...,\nu_n))$  of order
$|\nu|=\nu_1+\cdots +\nu_n\le r$ exist and belong to $L^p(\R^n).$

Further, we shall consider the homogeneous Besov spaces and the
homogeneous Triebel-Lizorkin spaces. These spaces have a wide
history.
 They admit several equivalent definitions in terms of moduli of smoothness, approximations, Littlewood-Paley decompositions,
 Cauchy-Poisson semigroup, Gauss-Weierstrass semigroup,  wavelet decompositions (see \cite{Pee, Tri, Tri2, Tri3}).
 In this paper we deal with the thermic description based on the Gauss-Weierstrass semigroup.

 From now on, define for any $x,y\in \R^n$,
\begin{equation*}
p_h(y)=\frac{e^{-|y|^2/(4h)}}{(4\pi h)^{n/2}},\qquad
P_hf(x)= \int_{\mathbb{R}^n} p_h(y)f(x-y)dy.
\end{equation*}

By H\"older's inequality, for any $f\in L^q(\R^n),\,\,1\le q\le
\infty,$
\begin{equation}\label{P_h}
\|P_hf\|_\infty\le ch^{-n/(2q)}\|f\|_q.
\end{equation}

 Let $-\infty<s<\infty$, $0<q\le \infty,$ and $0<p\le \infty$.  Let $m$ be a non-negative integer such that $2m>s$. The homogeneous Besov space
 $\dot{B}^s_{p,q}(\R^n)$ is defined as the space of all tempered distributions $f\in S'$ on $\R^n$ such that
\begin{equation*}
\|f\|_{\dot{B}^s_{p,q}}=\left(\int_0^\infty h^{(m-s/2)q}\left\|\frac{\partial^m P_hf}{\partial h^m}\right\|_p^q\frac{dh}{h}\right)^{1/q}<\infty
\end{equation*}
(usual modification if $q=\infty$).

It is well known that Besov spaces $\dot{B}^s_{p,q}$ increase as the
second index $q$ increases, that is
\begin{equation}\label{second_index}
\|f\|_{\dot{B}^s_{p,q}}\le c\|f\|_{\dot{B}^s_{p,r}},\quad 0<r<q\le \infty.
\end{equation}

Furthermore,  if $0<p_0<p_1\le \infty,\,\, 0<q\le \infty, \,
-\infty<s_0<\infty,$ and $s_1=s_0-n(1/p_0-1/p_1),$ then
\begin{equation}\label{embed1}
\|f\|_{\dot{B}^{s_1}_{p_1,q}}\le c \|f\|_{\dot{B}^{s_0}_{p_0,q}}
\end{equation}
(see \cite[2.7.1]{Tri}).

 We have also
the following inequality: if $1\le p_0<p_1\le\infty$, $n\ge 2$ if $p_0=1$, $r\in\NN,$ and
$s=r-n(1/p_0-1/p_1),$ then for any function $f\in W_{p_0}^r(\R^n)$
\begin{equation}\label{embed2}
\|f\|_{\dot{B}^s_{p_1,p_0}}\le c \sum_{|\nu|=r}\|D^\nu f\|_{p_0}.
\end{equation}
By (\ref{embed1}), it is sufficient to obtain (\ref{embed2}) in the
case $s>0$;  for this case, see   \cite{Herz}, \cite{Ko88} --
\cite{KP},  and references therein.

We recall also the thermic definition of Triebel-Lizorkin spaces.
Let $-\infty<s<\infty$, $0<p< \infty$ and $0<q\le \infty$.  Let
$m$ be a non-negative integer such that $2m>s$. The homogeneous
Triebel-Lizorkin space $\dot{F}^s_{p,q}(\R^n)$ is defined as the
space of all tempered distributions $f\in S'$ on $\R^n$ such that
\begin{equation*}
\|f\|_{\dot{F}^s_{p,q}}=\left\|\left(\int_0^\infty h^{(m-s/2)q}\left|\frac{\partial^m P_hf}{\partial h^m}(\cdot)\right|^q\frac{dh}{h}\right)^{1/q}\right\|_p
\end{equation*}
(usual modification if $q=\infty$).

For fixed $p$ the Tribel-Lizorkin spaces $\dot{F}^s_{p,q}(\R^n)$
increase as the  index $q$ increases.

In order to obtain more precise Gagliardo-Nirenberg inequalities
we will consider also the  Triebel-Lizorkin  spaces based on Lorentz
quasinorms. Let $-\infty<s<\infty$, $0<p<\infty$, $0<q,r \le
\infty.$ We say that a tempered distribution $f$ belongs to
$\dot{F}^s_{p,r;q}(\R^n)$ if
 \begin{equation*}
\left\|\left(\int_0^\infty h^{(m-s/2)q}\left|\frac{\partial^m P_hf}{\partial h^m}(\cdot)\right|^q\frac{dh}{h}\right)^{1/q}\right\|_{p,r}<\infty
\end{equation*}
(usual modification if $q=\infty$). Observe that quasinorms of this
kind were considered in \cite{MM}. The corresponding quasinorms
based on Littlewood-Paley decompositions were also used in \cite{XY,
YCP, ZWT}.

\section{Auxiliary propositions for inequalities involving  Sobolev norms}

The following lemma is a slight modification of Lemma 2.4 in
\cite{KP}.
\begin{lem}\label{KP}
Let $\{\alpha_k\}_{k\in \mathbb{Z}}\in\ell^1$ be a nonzero sequence
of nonnegative numbers, and let $0<\delta<\infty$. Then there exists
a sequence $\{\beta_k\}_{k\in\mathbb{Z}}$ of positive numbers
satisfying the following conditions:
\begin{enumerate}

\item $\displaystyle \alpha_k\le \beta_k$ for all $k\in \mathbb{Z}$;

\item $\displaystyle \sum_{k\in\ZZ}\beta_k = \frac{1}{(1-2^{-\delta})^{2}}\sum_{k\in\mathbb{Z}}\alpha_k$;

\item $\displaystyle 2^{-\delta}\le \beta_{k+1}/\beta_k\le 2^\delta$, $k\in\ZZ$.
\end{enumerate}

\end{lem}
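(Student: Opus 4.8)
The plan is to build the sequence $\{\beta_k\}$ explicitly by "spreading out" the mass of $\{\alpha_k\}$ according to a geometric envelope of ratio $2^{\pm\delta}$. First I would form the discrete analogue of a maximal function: for each $k\in\ZZ$ set
\begin{equation*}
\gamma_k=\sup_{j\in\ZZ}2^{-\delta|k-j|}\alpha_j.
\end{equation*}
By construction $\gamma_k\ge\alpha_k$ (take $j=k$), and $\gamma_k$ is finite since $\{\alpha_k\}\in\ell^1$ is bounded. The key structural property is the one-step comparison $2^{-\delta}\le\gamma_{k+1}/\gamma_k\le 2^\delta$: indeed, every term $2^{-\delta|k+1-j|}\alpha_j$ defining $\gamma_{k+1}$ is at most $2^\delta\cdot 2^{-\delta|k-j|}\alpha_j\le 2^\delta\gamma_k$, whence $\gamma_{k+1}\le 2^\delta\gamma_k$, and symmetrically $\gamma_k\le 2^\delta\gamma_{k+1}$; this gives condition (3) for $\gamma$. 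Also $\gamma_k>0$ for all $k$ because $\{\alpha_k\}$ is nonzero and the weights $2^{-\delta|k-j|}$ are strictly positive.

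Next I would control the $\ell^1$ norm of $\{\gamma_k\}$. Interchanging the sup with the sum crudely,
\begin{equation*}
\sum_{k\in\ZZ}\gamma_k\le\sum_{k\in\ZZ}\sum_{j\in\ZZ}2^{-\delta|k-j|}\alpha_j=\Big(\sum_{m\in\ZZ}2^{-\delta|m|}\Big)\sum_{j\in\ZZ}\alpha_j=\frac{1+2^{-\delta}}{1-2^{-\delta}}\sum_{j\in\ZZ}\alpha_j.
\end{equation*}
A small check shows $\tfrac{1+2^{-\delta}}{1-2^{-\delta}}\le\tfrac{1}{(1-2^{-\delta})^2}$ (equivalently $(1+2^{-\delta})(1-2^{-\delta})=1-2^{-2\delta}\le 1$), so $\{\gamma_k\}$ already satisfies (1), (3), and the \emph{inequality} version of (2) with the stated constant. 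To upgrade the inequality in (2) to an equality I would simply scale: let $A=\sum_k\alpha_k$, $\Gamma=\sum_k\gamma_k$, and put $\beta_k=\big(\tfrac{A}{(1-2^{-\delta})^2\Gamma}\big)\gamma_k$. Since $\Gamma\ge A$ while $\Gamma\le\tfrac{A}{(1-2^{-\delta})^2}$ (using the bound above together with $1-2^{-2\delta}\le 1$), the scaling factor lies in $[(1-2^{-\delta})^{-2}\cdot\tfrac{A}{\Gamma},\,1]\supseteq$ a set of numbers $\ge 1$, hence $\beta_k\ge\gamma_k\ge\alpha_k$, giving (1); the ratio bound (3) is scale-invariant; and $\sum_k\beta_k=\tfrac{A}{(1-2^{-\delta})^2}$ by design, giving (2).

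The only delicate points are bookkeeping ones: verifying that the scaling constant is $\ge 1$ (so that (1) survives the rescaling) and that the elementary inequality $1-2^{-2\delta}\le 1$ is exactly what reconciles the crude geometric-series bound $\tfrac{1+2^{-\delta}}{1-2^{-\delta}}$ with the target constant $\tfrac{1}{(1-2^{-\delta})^2}$. I expect the main obstacle to be nothing more than arranging these constants consistently; the substantive content—the maximal-function envelope $\gamma_k$ and its ratio estimate—is short and self-contained, and this is presumably why the statement is flagged as only a "slight modification" of Lemma 2.4 in \cite{KP}.
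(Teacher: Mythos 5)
Your proof is correct, but it takes a genuinely different route from the paper's. The paper builds $\beta_k$ by two successive one-sided weighted tail sums: first $\alpha'_k=2^{-k\delta}\sum_{m\le k}2^{m\delta}\alpha_m$ (which makes $2^{k\delta}\alpha'_k$ increasing, hence gives the lower ratio bound), then $\beta_k=2^{k\delta}\sum_{m\ge k}2^{-m\delta}\alpha'_m$ (which makes $2^{-k\delta}\beta_k$ decreasing, hence the upper bound); two applications of Fubini produce the constant $(1-2^{-\delta})^{-2}$ exactly, so condition (2) holds as an identity with no rescaling. Your two-sided maximal envelope $\gamma_k=\sup_j 2^{-\delta|k-j|}\alpha_j$ delivers both ratio bounds and the majorization $\gamma_k\ge\alpha_k$ in one stroke, and the sup-to-sum estimate gives the slightly better constant $\tfrac{1+2^{-\delta}}{1-2^{-\delta}}\le(1-2^{-\delta})^{-2}$; you then rescale upward to hit the prescribed sum, which preserves (1) and (3) because the factor is $\ge 1$ and the ratio condition is scale-invariant. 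The one blemish is the sentence locating the scaling factor in an interval: as written it is garbled (the factor equals $A/((1-2^{-\delta})^{2}\Gamma)$ and lies in $[1,(1-2^{-\delta})^{-2}]$ because $A\le\Gamma\le A(1-2^{-\delta})^{-2}$), but the conclusion you actually need, that the factor is at least $1$, is correctly justified by your bound on $\Gamma$. A side benefit of your argument is that it exhibits the improved constant $\tfrac{1+2^{-\delta}}{1-2^{-\delta}}$ when only the inequality version of (2) is required, which is all that is used later in the paper; the paper's construction, by contrast, realizes the stated constant as an exact identity without any rescaling step.
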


\begin{proof}
Define
\begin{equation*}
\alpha'_k=2^{-k\delta}\sum_{m\le k} 2^{m\delta} \alpha_m,\quad k\in\mathbb{Z}.
\end{equation*}
Then $\alpha'_k\ge \alpha_k$ and
\begin{equation}\label{sumaalphaprima}
\sum_{k\in\ZZ}\alpha'_k =\sum_{m\in \ZZ}2^{m\delta}\alpha_m \sum_{k=m}^\infty 2^{-k\delta}=\frac{1}{1-2^{-\delta}}\sum_{m\in\ZZ}\alpha_m.
\end{equation}
Since $2^{k\delta}\alpha'_k$ increases, we have
\begin{equation}\label{cocientealphaprima}
\alpha'_{k+1}\ge 2^{-\delta}\alpha'_k,\quad k\in\ZZ.
\end{equation}
Further, set
\begin{equation*}
\beta_k=2^{k\delta}\sum_{m=k}^\infty 2^{-m\delta}\alpha'_m.
\end{equation*}
Then $\beta_k\ge \alpha'_k\ge \alpha_k$. By (\ref{sumaalphaprima}),
we have also
\begin{equation*}
\sum_{k\in\ZZ}\beta_k= \sum_{k\in\ZZ}2^{k\delta}\sum_{m=k}^\infty 2^{-m\delta}\alpha'_m=\sum_{m\in \ZZ}2^{-m\delta}\alpha'_m\sum_{k=-\infty}^m 2^{k\delta}=
\end{equation*}
\begin{equation*}
=\frac{1}{1-2^{-\delta}}\sum_{m\in \ZZ}\alpha'_m=\frac{1}{(1-2^{-\delta})^2}\sum_{m\in \ZZ}\alpha_m.
\end{equation*}
Since $\{\alpha_k\}_{k\in\ZZ}$ is nonzero, we have $\alpha'_k>0$ for
$k\ge k_0$. Thus, $\beta_k>0$ ($k\in\ZZ$). Since
$2^{-k\delta}\beta_k$ decreases, we have
\begin{equation*}
\beta_k\ge 2^{-\delta}\beta_{k+1}.
\end{equation*}
On the other hand, using (\ref{cocientealphaprima}), we obtain
$$
\beta_k= 2^{k\delta}\sum_{m=k}^\infty 2^{-m\delta}\alpha'_m\le  2^{(k+1)\delta}\sum_{m=k}^\infty 2^{-m\delta}\alpha'_{m+1}
$$
$$
=2^{(k+1)\delta}\sum_{m=k+1}^\infty
2^{-(m-1)\delta}\alpha'_{m}=2^\delta \beta_{k+1}.
$$
\end{proof}

\begin{lem}\label{sets}
Let $J\subset \mathbb{Z}$ and $\{E_j\}_{j\in J}\subset \mathbb{R}^n$ be a sequence of measurable disjoint sets such that for any $j\in J$,
\begin{equation*}
\mu_j=\sum_{ k\in J, k\ge j}|E_k|>0
\end{equation*}
Let $1\le q\le p<\infty$. Then for any function $f\in L^{p,q}(\mathbb{R}^n)$
\begin{equation}\label{lemasetsmain}
\sum_{j\in J} \mu_j^{q/p-1}\int_{E_j}|f(x)|^qdx \le \|f\|_{q,p}^q.
\end{equation}
\end{lem}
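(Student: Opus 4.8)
The plan is to reduce inequality \eqref{lemasetsmain} to the integral representation \eqref{lor_alt} of the Lorentz quasinorm via a rearrangement argument. First I would order the sets. Since the $\mu_j$ are strictly decreasing in $j$ along $J$ (as the $E_k$ are disjoint with positive tail sums), relabel $J$ so that it is written in increasing order $j_1<j_2<\cdots$ and write $E_1,E_2,\ldots$ for the corresponding sets, $a_i=|E_i|$, and $\mu_i=\sum_{k\ge i}a_k$. The key geometric observation is that $\int_{E_i}|f|^q\,dx\le\int_{A_i}(|f|^q)^*(t)\,dt$ where $A_i=(\mu_{i+1},\mu_i]$ is an interval of length $a_i$ placed so that $\bigcup_i A_i$ is (essentially) an initial segment $(0,\mu_1]$ of $\R_+$; this follows from \eqref{sup}, but one must be slightly careful because the sets $E_i$ are not nested, so I would instead use the cleaner estimate $\int_{E_i}|f|^q\,dx\le\int_0^{a_i}(|f\chi_{E_i}|^q)^*(t)\,dt\le\int_0^{\mu_i}(|f|^q)^*(t)\,dt-\int_0^{\mu_{i+1}}(|f|^q)^*(t)\,dt$, the last step using that $(|f|^q)^*$ is non-increasing so the integral over the interval of length $a_i=\mu_i-\mu_{i+1}$ furthest out is the smallest. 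Denoting $g=(|f|^q)^*$ and $G(s)=\int_0^s g$, this gives $\int_{E_i}|f|^q\,dx\le G(\mu_i)-G(\mu_{i+1})$.

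Next I would perform an Abel summation against the weights $\mu_i^{q/p-1}$. Write $w_i=\mu_i^{q/p-1}$ and note that since $q\le p$ we have $q/p-1\le 0$, so $w_i$ is non-decreasing in $i$ (as $\mu_i$ decreases). Then
\begin{equation*}
\sum_i w_i\bigl(G(\mu_i)-G(\mu_{i+1})\bigr)=\sum_i (w_i-w_{i-1})\,G(\mu_i),
\end{equation*}
with the convention $w_0=0$ (and the boundary terms vanishing because $\mu_i\to0$, $G(\mu_i)\to0$; if $J$ has a smallest element the first term is just $w_1G(\mu_1)$, still covered). Since $w_i-w_{i-1}=\mu_i^{q/p-1}-\mu_{i-1}^{q/p-1}\ge0$ and $G(\mu_i)=\int_0^{\mu_i}g\le \int_0^{\mu_i}g(t)\,dt$, and more usefully $G(s)\le s\, g^{**}$-type bounds are \emph{not} what I want here; instead I would simply bound $G(\mu_i)$ by comparing with the integral $\int_0^\infty$. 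Concretely, telescoping the other way, $\sum_i(w_i-w_{i-1})G(\mu_i)=\sum_i(w_i-w_{i-1})\int_0^{\mu_i}g(t)\,dt=\int_0^\infty g(t)\Bigl(\sum_{i:\mu_i\ge t}(w_i-w_{i-1})\Bigr)dt=\int_0^\infty g(t)\,w_{i(t)}\,dt$, where $i(t)$ is the largest index with $\mu_{i(t)}\ge t$, so that $w_{i(t)}=\mu_{i(t)}^{q/p-1}$. Since $g=(|f|^q)^*$ and $(|f|^q)^*(t)=(f^*(t))^q$, and $\mu_{i(t)}\ge t$ forces $\mu_{i(t)}^{q/p-1}\le t^{q/p-1}$ (again using $q/p-1\le0$), I get
\begin{equation*}
\sum_{j\in J}\mu_j^{q/p-1}\int_{E_j}|f|^q\,dx\le\int_0^\infty t^{q/p-1}f^*(t)^q\,dt=\int_0^\infty\bigl(t^{1/p}f^*(t)\bigr)^q\frac{dt}{t}=\|f\|_{p,q}^q,
\end{equation*}
which is exactly \eqref{lemasetsmain}.

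The main obstacle, and the place requiring genuine care, is the first step: justifying that the "worst placement" of each set $E_i$ in rearrangement terms is the outermost interval $(\mu_{i+1},\mu_i]$, i.e. the inequality $\int_{E_i}|f|^q\,dx\le G(\mu_i)-G(\mu_{i+1})$. This is where disjointness of the $E_j$ is essential and where one must avoid the naive (false) claim that $|f\chi_{E_i}|^*$ is supported where $(|f|^q)^*$ is largest. The correct route is: disjointness gives $\sum_{k\ge i}\int_{E_k}|f|^q\le\int_0^{\mu_i}(|f|^q)^*$ by \eqref{sup} applied to the set $\bigcup_{k\ge i}E_k$ of measure $\mu_i$; then a second Abel summation converts the family of these "tail" inequalities (one for each $i$) into \eqref{lemasetsmain}, using the monotonicity $\mu_j^{q/p-1}-\mu_{j-1}^{q/p-1}\ge0$. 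I would actually organize the whole proof around this second viewpoint, as it is cleaner: set $S_i=\sum_{k\ge i}\int_{E_k}|f|^q\le G(\mu_i)$, write $\int_{E_j}|f|^q=S_j-S_{j+1}$, sum by parts to get $\sum_j\mu_j^{q/p-1}(S_j-S_{j+1})=\sum_j(\mu_j^{q/p-1}-\mu_{j-1}^{q/p-1})S_j\le\sum_j(\mu_j^{q/p-1}-\mu_{j-1}^{q/p-1})G(\mu_j)$, and finish as above. The condition $1\le q\le p<\infty$ enters exactly twice — to make the weight sequence monotone for the summation by parts, and to bound $\mu_{i(t)}^{q/p-1}\le t^{q/p-1}$ at the end — and the condition $\mu_j>0$ guarantees the weights are well defined.
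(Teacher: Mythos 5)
Your final organization of the argument --- the tail sums $S_j=\sum_{k\in J,\,k\ge j}\int_{E_k}|f|^q\le G(\mu_j):=\int_0^{\mu_j}(f^*)^q$ obtained from (\ref{sup}) applied to $\bigcup_{k\ge j}E_k$, followed by summation by parts against the non-decreasing weights $\mu_j^{q/p-1}$ and the pointwise bound $\mu_{i(t)}^{q/p-1}\le t^{q/p-1}$ --- is correct, and it reaches the right conclusion $\|f\|_{p,q}^q$ (the $\|f\|_{q,p}$ in the statement is a typo; the paper's own proof also ends at $\int_0^\infty u^{q/p-1}f^*(u)^q\,du$). The paper takes a shorter route: it writes the left-hand side as $\int_{\R^n}G(x)|f(x)|^q\,dx$ with $G=\sum_j\mu_j^{q/p-1}\chi_{E_j}$, uses disjointness and the monotonicity of the weights to compute $G^*(u)=\sum_j\mu_j^{q/p-1}\chi_{(\mu_{j+1},\mu_j]}(u)\le u^{q/p-1}$, and then invokes the Hardy--Littlewood inequality $\int G|f|^q\le\int_0^\infty G^*(u)(f^*(u))^q\,du$. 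Your Abel summation is in effect a self-contained proof of exactly this instance of Hardy--Littlewood, and both arguments use $q\le p$ in the same two places; what the paper's version buys is brevity, what yours buys is independence from the rearrangement inequality. Two small repairs: your relabelling $j_1<j_2<\cdots$ presumes $J$ has a least element, whereas in the application $J$ may be unbounded below --- truncate to $J\cap[-M,\infty)$ (which does not change the $\mu_j$ for $j\ge -M$) and let $M\to\infty$; and $\mu_j$ need only be non-increasing, not strictly decreasing, since some $|E_j|$ may vanish, though this costs nothing.

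One step in your first paragraph is not merely ``delicate'' but false and should be deleted: the term-by-term inequality $\int_{E_i}|f|^q\,dx\le G(\mu_i)-G(\mu_{i+1})$ fails in general. Take two disjoint sets $E_1,E_2$ of measure $1$ and $f=\chi_{E_1}$: then $\mu_1=2$, $\mu_2=1$, $(|f|^q)^*=\chi_{(0,1]}$, so $G(\mu_1)-G(\mu_2)=0$ while $\int_{E_1}|f|^q=1$. Your justification is backwards: the interval $(\mu_{i+1},\mu_i]$ is the \emph{outermost} block of length $|E_i|$, so $\int_{\mu_{i+1}}^{\mu_i}(f^*)^q$ is the \emph{smallest} possible value of $\int_{E}|f|^q$ over sets of that measure, not an upper bound for it. Fortunately your ``second viewpoint'' never uses this: it only uses $S_j\le G(\mu_j)$ \emph{after} the summation by parts, against the nonnegative increments of the weights, which is exactly why the argument goes through.
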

\begin{proof}
Observe that
\begin{equation*}
\sum_{j\in J} \mu_j^{q/p-1}\int_{E_j}|f(x)|^qdx = \int_{\mathbb{R}^n}G(x)|f(x)|^q dx,
\end{equation*}
where 
\begin{equation*}
G(x)=\sum_{j\in J} \mu_j^{q/p-1}\chi_{E_j}(x).
\end{equation*}
Since $q\le p$, it holds that $\mu_j^{q/p-1}$ increases as $j$ increases. Then, if $y\in E_j$,
\begin{equation*}
G(y)=\mu_j^{q/p-1}\le G(x)\text{ if }x\in \bigcup_{k\in J, k>j}E_k
\end{equation*}
and
\begin{equation*}
G(y)\ge G(x) \text{ if }x\in \bigcup_{k\in J, k<j}E_k.
\end{equation*}  In consequence,
\begin{equation*}
G^*(u)=\sum_{j\in J} \mu_j^{q/p-1}\chi_{(\mu_{j+1},\mu_j]}(u). 
\end{equation*}
Further, for $u\in (\mu_{j+1},\mu_j]$ it holds that $\mu_j^{q/p-1}\le u^{q/p-1}$, hence $G^*(u)\le u^{q/p-1}$. Applying Hardy-Littlewood inequality, we obtain
\begin{equation*}
\int_{\mathbb{R}^n}G(x)|f(x)|^q dx\le \int_0^\infty G^*(u)f^*(u)^qdu\le \int_0^\infty u^{q/p-1}f^*(u)^qdu.
\end{equation*}
This implies (\ref{lemasetsmain}).
\end{proof}

\begin{lem}\label{functions}
Let $1< p\le q<\infty.$ Assume that $f\in L^{p,q}(\R^n)$ and let
$\{E_j\}_{j\in \ZZ}$ be a sequence of  measurable sets such that for
some $N\in\NN$
$$
E_j\cap E_k=\emptyset \quad\mbox{if}\quad |j-k|\ge N.
$$
Then
\begin{equation}\label{lorentz}
\sum_{j\in\ZZ} \|f\chi_{E_j}\|_{p,q}^q\le  N^{q/p} \|f\|_{p,q}^q.
\end{equation}
\end{lem}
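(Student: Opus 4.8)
The plan is to reduce the claim to a situation where Lemma~\ref{sets} applies, by splitting the index set $\ZZ$ into $N$ arithmetic progressions of step $N$. First I would write $\ZZ=\bigcup_{i=0}^{N-1}J_i$ where $J_i=\{j\in\ZZ: j\equiv i \pmod N\}$. Within each $J_i$ the sets $\{E_j\}_{j\in J_i}$ are pairwise disjoint, since any two distinct indices in $J_i$ differ by at least $N$. Hence it suffices to prove, for each $i$,
\begin{equation*}
\sum_{j\in J_i}\|f\chi_{E_j}\|_{p,q}^q\le \|f\|_{p,q}^q,
\end{equation*}
and then sum over the $N$ values of $i$ to obtain the factor $N$ on the right-hand side; in fact this already gives the bound $N\|f\|_{p,q}^q$, which is stronger than $N^{q/p}\|f\|_{p,q}^q$ when $q>p$, so I would simply record that the stated (weaker) inequality follows a fortiori. [One must only be mildly careful: the sets $E_j$ in a fixed $J_i$ need not be disjoint from those in another $J_{i'}$, but that is irrelevant since we estimate each progression separately.]

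Next, for a fixed progression $J:=J_i$, I want to invoke Lemma~\ref{sets} with the single function $\chi_{E_j}\cdot f$ replaced appropriately, but more directly: Lemma~\ref{sets} as stated (with its roles of $p$ and $q$) bounds $\sum_j \mu_j^{q/p-1}\int_{E_j}|g|^q\le \|g\|_{q,p}^q$ for $1\le q\le p<\infty$ and disjoint $E_j$ with tails $\mu_j=\sum_{k\ge j}|E_k|>0$. To use it here, where $1<p\le q<\infty$, I would relabel: set $P=q$ and $Q=p$ so that $1\le Q<P<\infty$ (the case $p=q$ being trivial since then the left side of \eqref{lorentz} is just $\sum_j\int_{E_j}|f|^q\le\int|f|^q=\|f\|_q^q$). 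The key identity is that for the Lorentz quasinorm with $1\le q\le p$ one can control $\|f\chi_{E_j}\|_{p,q}^q$ from above by an expression of the form $|E_j|^{q/p-1}\int_{E_j}|f|^q$ only after a rearrangement argument; more precisely, I would use the bound
\begin{equation*}
\|g\chi_{E}\|_{p,q}^q\le |E|^{q/p-1}\int_E |g(x)|^q\,dx
\end{equation*}
valid when $q\le p$, which follows because $(g\chi_E)^*$ is supported on $(0,|E|]$ and there $t^{q/p-1}\le |E|^{q/p-1}$ is false in that direction — so instead I would argue via $\|g\chi_E\|_{p,q}^q=\int_0^{|E|}t^{q/p-1}(g\chi_E)^*(t)^q\,dt\le |E|^{q/p-1}\int_0^{|E|}(g\chi_E)^*(t)^q\,dt=|E|^{q/p-1}\int_E|g|^q$, which is correct precisely because $q/p-1\le 0$ makes $t^{q/p-1}$ decreasing, hence $\le |E|^{q/p-1}$ near $t=0$ — wait, that is also the wrong direction. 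The honest route is: since $q/p\le 1$, the function $t\mapsto t^{q/p-1}$ is decreasing, so it is \emph{not} bounded by $|E|^{q/p-1}$ on $(0,|E|)$; instead one writes $\mu_j^{q/p-1}$ (tail sums, which are $\ge|E_j|$) and this is exactly the shape of Lemma~\ref{sets}.

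So the actual chain is: apply Lemma~\ref{sets} directly with $f$ itself, $1\le q\le p$ replaced by the inequality $1\le q$ and $q\le p$ read off from $p\le q$ by swapping — i.e.\ I realize the cleanest statement is to apply Lemma~\ref{sets} \emph{verbatim} but with its ``$p$'' set to our $q$ and its ``$q$'' set to our $p$, giving $\sum_{j\in J}\mu_j^{p/q-1}\int_{E_j}|f|^p\le\|f\|_{p,q}^p$ — no, the exponents will not match $\|f\chi_{E_j}\|_{p,q}^q$ either. I expect the main obstacle to be exactly this bookkeeping: matching $\|f\chi_{E_j}\|_{p,q}^q$, which expands as $q\int_0^\infty y^{q-1}\lambda_{f\chi_{E_j}}(y)^{q/p}\,dy$ via \eqref{lor_alt}, to the quantity controlled by Lemma~\ref{sets}. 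Concretely I would: (1) fix the progression $J$ and, applying Lemma~\ref{sets} to the level set $\{|f|>y\}\cap E_j$ in place of $E_j$, obtain for each $y>0$ a bound on $\sum_j \mu_j(y)^{q/p-1}|\{|f|>y\}\cap E_j|$ — but the $\mu_j$ depend on $y$, which breaks monotonicity; (2) so instead I would use the global tails $\mu_j=\sum_{k\in J,k\ge j}|E_k|$ once and for all, note $|\{|f|>y\}\cap E_j|\le|E_j|$ hence the partial tails of the truncated sets are $\le\mu_j$, and since $q/p-1\le 0$, $\mu_j^{q/p-1}\le(\text{truncated tail})^{q/p-1}$; (3) apply \eqref{lor_alt} to $f\chi_{E_j}$, bound $\lambda_{f\chi_{E_j}}(y)^{q/p}$ by interpolating against $\mu_j$, sum in $j$ using step~(2), and integrate in $y$ to recover $q\int_0^\infty y^{q-1}\lambda_f(y)^{q/p}\,dy=\|f\|_{p,q}^q$ on the right. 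Summing the $N$ progressions then yields $\sum_{j\in\ZZ}\|f\chi_{E_j}\|_{p,q}^q\le N\|f\|_{p,q}^q\le N^{q/p}\|f\|_{p,q}^q$ since $q\ge p$, which is \eqref{lorentz}.
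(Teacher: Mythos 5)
Your opening reduction is sound: splitting $\ZZ$ into the $N$ progressions $J_i=\{j:\,j\equiv i \pmod N\}$, the sets within each $J_i$ are pairwise disjoint, and if one can show
\begin{equation*}
\sum_{j\in J_i}\|f\chi_{E_j}\|_{p,q}^q\le \|f\|_{p,q}^q
\end{equation*}
for each $i$, then summing over $i$ gives the (slightly stronger) bound $N\|f\|_{p,q}^q\le N^{q/p}\|f\|_{p,q}^q$. The genuine gap is that you never prove this per-progression estimate. Everything after the reduction is an attempt to force Lemma~\ref{sets} into service, but that lemma lives in the opposite regime $q\le p$, and its tail weights $\mu_j^{q/p-1}$ play no role here: in the present lemma $p\le q$, so $q/p-1\ge 0$, and your step (2), which rests on ``$q/p-1\le 0$'', starts from a false premise. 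The concluding instruction to ``bound $\lambda_{f\chi_{E_j}}(y)^{q/p}$ by interpolating against $\mu_j$'' is not a concrete argument. (You also misquote \eqref{lor_alt}: the prefactor is $p$, not $q$.)

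The single ingredient you are missing is elementary: for nonnegative numbers $a_j$ and an exponent $\rho=q/p\ge 1$ one has $\sum_j a_j^{\rho}\le\bigl(\sum_j a_j\bigr)^{\rho}$. Writing $f_j=f\chi_{E_j}$, the disjointness of $\{E_j\}_{j\in J_i}$ gives $\sum_{j\in J_i}\lambda_{f_j}(y)\le\lambda_f(y)$ for every $y>0$, hence $\sum_{j\in J_i}\lambda_{f_j}(y)^{q/p}\le\lambda_f(y)^{q/p}$, and integrating against $p\,y^{q-1}\,dy$ via \eqref{lor_alt} closes the per-progression bound. This is essentially the paper's own proof: there the $N$ progressions are recombined first, yielding $\sum_{j\in\ZZ}\lambda_{f_j}(y)\le N\lambda_f(y)$, and the same superadditivity then produces the factor $N^{q/p}$. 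Your decomposition is the right one; you only need to replace the detour through Lemma~\ref{sets} by this one-line convexity inequality.
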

\begin{proof}
Note that $E_{k+jN}\cap E_{k+iN}=\emptyset$ for any $k,j, i\in\mathbb{Z}$, $i\neq j$. Denote $f_j=f\chi_{E_j}$. We have for any $0\le k<N$, 
\begin{equation*}
\sum_{j\in\mathbb{Z}}\lambda_{f_{k+jN}}(y)=\sum_{j\in\mathbb{Z}} |\{x\in E_{k+jN}: |f(x)|>y\}|\le \lambda_f(y).
\end{equation*}
Then
\begin{equation*}
\sum_{j\in\mathbb{Z}}\lambda_{f_j}(y)=\sum_{k=0}^{N-1} \sum_{j\in\mathbb{Z}}\lambda_{f_{k+jN}}(y)\le N\lambda_f(y) \,\text{ for any }\,y>0. 
\end{equation*}

Thus, using (\ref{lor_alt}) and taking into account that $p\le q$ we get,
$$
\begin{aligned}
\sum_{j\in\ZZ} \|f_j\|_{p,q}^q&=p \sum_{j\in\ZZ} \int_0^\infty y^{q-1}\left(\l_{f_j}(y)\right)^{q/p}\,dy\\
&\le p\int_0^\infty y^{q-1}\left(\sum_{j\in\ZZ} \l_{f_j}(y)\right)^{q/p}\,dy\\
&\le N^{q/p} p\int_0^\infty y^{q-1}\l_f(y)^{q/p}\,dy= N^{q/p}\|f\|_{p,q}^q.
\end{aligned}
$$
\end{proof}

As above, we denote
$$
p_h(y)=(4\pi h)^{-n/2}e^{-|y|^2/(4h)}.
$$

\begin{lem}\label{heat} Assume that a function  $f\in L_{loc}^1(\R^n)$   has a weak gradient $\nabla f\in  S_0(\R^n)$ such that
\begin{equation}\label{gradient}
\int_0^t(\nabla f)^*(s)\,ds<\infty \quad\mbox{for any}\quad t>0.
\end{equation}
Then for any $h>0$
\begin{equation}\label{Pf}
\int_{\R^n}p_h(x-y)|f(y)|\,dy<\infty \quad\mbox{for almost all}\quad x\in \R^n
\end{equation}
and
\begin{equation}\label{f-Pf}
(f-P_hf)^{**}(t)\le c_n\sqrt{h}(\nabla f)^{**}(t)  \quad\mbox{for any}\quad t>0,
\end{equation}
  where  $c_n$ depends only on $n$.
\end{lem}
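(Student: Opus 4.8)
The plan is to deduce \eqref{f-Pf} from an elementary ``pseudo-Poincar\'e'' estimate, and to obtain \eqref{Pf} separately, from an a priori polynomial bound on the local averages of $f$. First I would record that \eqref{gradient} and \eqref{sup} give $\int_B|\nabla f|\le\int_0^{|B|}(\nabla f)^*(s)\,ds<\infty$ for every ball $B$, so that $f$ belongs to $W_1^1$ on every ball and, after passing to a good representative (alternatively, working with the mollifications $f_\e=f*\rho_\e$, for which \eqref{sup} yields $(\nabla f_\e)^{**}\le(\nabla f)^{**}$, and letting $\e\to0$ at the end), is absolutely continuous on almost every line segment. The fundamental theorem of calculus along segments then gives $f(x)-f(x-y)=\int_0^1 y\cdot\nabla f(x-ty)\,dt$ for a.e.\ $(x,y)$, and hence, since $\int_{\R^n}p_h=1$ and (by \eqref{Pf}) $\int_{\R^n}p_h(y)|f(x-y)|\,dy<\infty$,
\begin{equation*}
|f(x)-P_hf(x)|\le F(x):=\int_{\R^n}p_h(y)\,|y|\int_0^1|\nabla f(x-ty)|\,dt\,dy\qquad\text{for a.e. }x.
\end{equation*}

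To prove \eqref{Pf} I would show that \eqref{gradient} forces $f$ to grow slowly. Since $(\nabla f)^*$ is non-increasing, $\int_0^t(\nabla f)^*\le\int_0^1(\nabla f)^*+(t-1)(\nabla f)^*(1)$ for $t\ge1$, so \eqref{sup} gives $\int_{B(0,R)}|\nabla f|\le c(1+R^n)$. Inserting this into the Poincar\'e inequality on $B(0,R)$ yields $|B(0,R)|^{-1}\int_{B(0,R)}|f-f_{B(0,R)}|\le cR$ for $R\ge1$ (here $f_{B(0,R)}$ denotes the mean of $f$ over $B(0,R)$); telescoping over dyadic radii gives $|f_{B(0,R)}|\le c(1+R)$ and hence $\int_{B(0,R)}|f|\le cR^{n+1}$. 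Using $|x-y|^2\ge\frac{1}{2}|y|^2-|x|^2$ and summing over dyadic annuli, one obtains $\int_{\R^n}p_h(x-y)|f(y)|\,dy\le c\,e^{|x|^2/(4h)}\int_{\R^n}|f(y)|e^{-|y|^2/(8h)}\,dy<\infty$ for every $x$, which is \eqref{Pf}; this same estimate is what legitimizes the pointwise identity for $f-P_hf$ used above.

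To conclude \eqref{f-Pf}, fix a measurable set $E$ with $|E|=t$. By Tonelli's theorem, the translation invariance of Lebesgue measure, \eqref{sup} applied to $E-ty$ (which has measure $t$), and the scaling identity $\int_{\R^n}p_h(y)|y|\,dy=c_n\sqrt h$ (the first moment of the Gauss--Weierstrass kernel),
\begin{equation*}
\int_E F(x)\,dx=\int_{\R^n}p_h(y)|y|\int_0^1\Bigl(\int_{E-ty}|\nabla f|\Bigr)\,dt\,dy\le c_n\sqrt h\int_0^{t}(\nabla f)^*(s)\,ds=c_n\sqrt h\,t\,(\nabla f)^{**}(t).
\end{equation*}
Letting $t\to\infty$ and using that $(\nabla f)^{**}(t)\le t^{-1}\int_0^M(\nabla f)^*+(\nabla f)^*(M)\to0$ (first $t\to\infty$, then $M\to\infty$, since $\nabla f\in S_0(\R^n)$), the displayed bound forces $\lambda_F(y)<\infty$ for every $y>0$, i.e.\ $F\in S_0(\R^n)$; therefore $f-P_hf\in S_0(\R^n)$ and $(f-P_hf)^*\le F^*$. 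Finally, applying \eqref{sup} to $f-P_hf$,
\begin{equation*}
t\,(f-P_hf)^{**}(t)=\sup_{|E|=t}\int_E|f-P_hf|\le\sup_{|E|=t}\int_E F\le c_n\sqrt h\,t\,(\nabla f)^{**}(t),
\end{equation*}
which is \eqref{f-Pf}.

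The pseudo-Poincar\'e computation in the third paragraph is the short heart of the matter; I expect the genuine work to lie in the two technical points of the first two paragraphs: establishing the slow-growth bound $\int_{B(0,R)}|f|\le cR^{n+1}$, hence \eqref{Pf}, from the sole hypothesis that $\int_0^t(\nabla f)^*<\infty$ for every $t$; and justifying carefully the segment-wise fundamental theorem of calculus for the weak gradient $\nabla f$ together with the ensuing interchanges of integration.
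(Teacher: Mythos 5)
Your proof of \eqref{f-Pf} is essentially the paper's: both arguments rest on the segment-wise identity $f(x)-f(x-y)=\int_0^1 y\cdot\nabla f(x-ty)\,dt$, the first-moment computation $\int p_h(y)|y|\,dy=c_n\sqrt h$, and the application of \eqref{sup} to the translates $E-ty$, which all have measure $t$; this is exactly the chain of estimates in the paper, and your extra step checking that $f-P_hf\in S_0(\R^n)$ (so that \eqref{sup} is legitimately applied) is a point the paper passes over silently but which is needed and which you settle correctly. Where you genuinely diverge is \eqref{Pf}. The paper gets it much more cheaply: it integrates $\int_{\R^n}p_h(v)|f(x+v)|\,dv$ in $x$ over an arbitrary cube $Q$, bounds the resulting double integral by $\int_Q|f|+c_n\sqrt h\int_0^{|Q|}(\nabla f)^*$ using the same translation trick, and concludes a.e.\ finiteness on $Q$ by Tonelli. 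Your route --- deriving $\int_{B(0,R)}|\nabla f|\le c(1+R^n)$ from \eqref{gradient}, feeding it into the $(1,1)$-Poincar\'e inequality, telescoping over dyadic radii to get $\int_{B(0,R)}|f|\le cR^{n+1}$, and then beating the polynomial growth with the Gaussian --- is correct but longer; what it buys is finiteness of $\int p_h(x-y)|f(y)|\,dy$ for \emph{every} $x$ rather than almost every $x$, together with an explicit quantitative growth bound on $f$ that is not needed for the lemma. Either way the remaining technical point, the validity of \eqref{diff} for a.e.\ $(x,y)$, is handled in the paper by a citation to Lieb--Loss, and your two suggested justifications (ACL representative, or mollification plus $(\nabla f_\e)^{**}\le(\nabla f)^{**}$ and a limit) are both standard and adequate.
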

\begin{proof} For almost every $x\in \R^n$ and almost every $v\in \R^n$ we have
\begin{equation}\label{diff}
f(x+v)-f(x)=\int_0^1\nabla f(x+\tau v)\cdot v\, d\tau
\end{equation}
(see \cite[p. 143]{LL}). Thus,
$$
|f(x+v)|\le |f(x)|+|v|\int_0^1|\nabla f(x+\tau v)|\, d\tau.
$$
From here, we obtain that for any cube $Q\subset \R^n$
$$
\int_Q \int_{\R^n}p_h(v)|f(x+v)|\,dv\,dx\le \int_Q|f(x)|\,dx
$$
$$
+ \int_0^1\int_{\R^n}p_h(v)|v|\int_Q|\nabla f(x+\tau v)|\,dx\,\,dv\, d\tau
$$
$$
\le \int_Q|f(x)|\,dx+ c_n\sqrt{h}\int_0^{|Q|}(\nabla f)^*(s)\,ds<\infty,
$$
where
$$
c_n=2\pi^{-n/2}\int_{\R^n}|v|e^{-|v|^2}\,dv.
$$
This implies (\ref{Pf}).

Further, for any $h>0$ we have
$$
f(x)-P_hf(x)= (4\pi h)^{-n/2}\int_{\R^n}e^{-|x-y|^2/(4h)}[f(x)-f(y)]dy
$$
$$
=\pi^{-n/2}\int_{\R^n}e^{-|z|^2}[f(x)-f(x+2\sqrt{h}z)]dz.
$$
Using (\ref{diff}), we get
$$
|f(x)-P_hf(x)|\le 2\sqrt{h}\pi^{-n/2}\int_{\R^n}|z|e^{-|z|^2}\int_0^1|\nabla f(x+2\sqrt{h}\tau z)|\,d\tau dz.
$$
By this estimate, we have for any measurable set $E\subset \R^n$
with measure $|E|=t>0$
$$
\begin{aligned}
&\int_E |f(x)-P_hf(x)|dx\\
&\le 2\sqrt{h}\pi^{-n/2}\int_{\R^n}|z|e^{-|z|^2}\int_0^1\int_E|\nabla f(x+2\sqrt{h}\tau z)|\,dx\,d\tau dz\\
&\le 2\sqrt{h}\pi^{-n/2}\int_0^t (\nabla f)^{*}(u)du \int_{\R^n}|z|e^{-|z|^2}dz.
\end{aligned}
$$
This implies (\ref{f-Pf}).
\end{proof}

\begin{rem}\label{M-M}
We observe that inequality (\ref{f-Pf}) was proved in \cite{MM}
(with the use of $K-$functionals). We give a direct proof of this
inequality for completeness.

\end{rem}

\begin{rem}\label{P**}
Assume that $f\in S_0(\R^n)$ and
\begin{equation}\label{f**}
f^{**}(t)<\infty \quad\mbox{for any}\quad t>0.
\end{equation}
Then
\begin{equation}\label{Pf**}
(P_hf)^{**}(t)\le f^{**}(t)\quad\mbox{for all}\quad t>0.
\end{equation}

Indeed, for any measurable set $E\subset \R^n$ with measure $|E|=t$
we have
$$
\int_E|P_hf(x)|\,dx\le \int_{\R^n}p_h(z)\int_E|f(x-z)|dxdz\le tf^{**}(t).
$$
This implies (\ref{Pf**}).

It is possible to prove that (\ref{f**}) holds for any function $f$
satisfying conditions of Lemma \ref{heat}.
\end{rem}

\begin{lem}\label{lifting_termico}
Let $\alpha=(\alpha_1,\ldots,\alpha_n)\in \mathbb{Z}^n$ be a non-negative multi-index and set $|\alpha|=\alpha_1+\ldots+\alpha_n$. Assume that $f$ is a locally integrable function 
, which has weak derivative $D^\alpha f$. Assume also that $f$ is a tempered distribution. Let $s<0$, $1\le p,q\le \infty$, then
\begin{equation*}
\|D^\alpha f\|_{\dot{B}_{p,q}^{s-|\alpha|}(\mathbb{R}^n)}\le c \|f\|_{\dot{B}_{p,q}^{s}(\mathbb{R}^n)},
\end{equation*}
where $c$ only depends on $n$, $\alpha$ and $s$.
\end{lem}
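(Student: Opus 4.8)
The plan is to argue directly from the thermic description of the Besov quasinorms, using three facts about the Gauss--Weierstrass semigroup: it commutes with differentiation, it satisfies $P_h=P_{h/2}\circ P_{h/2}$, and it is built from the self-similar kernel $p_h$. Since $f$ is a tempered distribution with weak derivative $D^\alpha f$, the distribution $D^\alpha f$ is also tempered, and $p_h,\ D^\alpha p_h,\ \Delta^m p_h\in\mathcal{S}$, so all convolutions below are legitimate (convolution of a Schwartz function with a tempered distribution), and derivatives and translations pass freely through them.

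First I would fix a non-negative integer $m$ with $2m>s$; since $s<0$ one may simply take $m=0$, and this same $m$ is admissible in the definition of $\dot{B}^{s-|\alpha|}_{p,q}$ because $s-|\alpha|\le s$. (A different admissible $m$ yields an equivalent quasinorm, so the statement does not depend on the choice.) If $\|f\|_{\dot{B}^s_{p,q}}=\infty$ there is nothing to prove, so assume it is finite; then $\partial_t^m P_tf\in L^p$ for a.e.\ $t>0$ (for all $t$ if $q=\infty$). From $\partial_h p_h=\Delta p_h$, $p_h=p_{h/2}*p_{h/2}$, and $p_{h/2}*D^\alpha f=(D^\alpha p_{h/2})*f$, distributing the $m$ Laplacians and the derivative $D^\alpha$ onto the two factors gives
\[
\frac{\partial^m}{\partial h^m}P_h(D^\alpha f)=(\Delta^m p_h)*D^\alpha f=(D^\alpha p_{h/2})*\big[(\Delta^m p_{h/2})*f\big]=(D^\alpha p_{h/2})*\Big(\frac{\partial^m}{\partial t^m}P_tf\Big)\Big|_{t=h/2}.
\]
(For $m=0$ this is just $P_h(D^\alpha f)=(D^\alpha p_{h/2})*P_{h/2}f$.)

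Next I would compute the relevant kernel norm by scaling: from $p_t(y)=t^{-n/2}p_1(y/\sqrt t)$ one gets $D^\alpha p_t(y)=t^{-n/2-|\alpha|/2}(D^\alpha p_1)(y/\sqrt t)$, hence $\|D^\alpha p_{h/2}\|_1=C_{n,\alpha}(h/2)^{-|\alpha|/2}$ with $C_{n,\alpha}=\|D^\alpha p_1\|_1$. Young's inequality (valid for every $1\le p\le\infty$) then yields
\[
\Big\|\frac{\partial^m}{\partial h^m}P_h(D^\alpha f)\Big\|_p\le C_{n,\alpha}\,(h/2)^{-|\alpha|/2}\,\Big\|\frac{\partial^m}{\partial t^m}P_tf\big|_{t=h/2}\Big\|_p.
\]

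Finally I would insert this bound into the thermic quasinorm of $\dot{B}^{s-|\alpha|}_{p,q}$ and change variables $t=h/2$. Using $m-(s-|\alpha|)/2=(m-s/2)+|\alpha|/2$ and $h^{(m-(s-|\alpha|)/2)q}(h/2)^{-|\alpha|q/2}=2^{|\alpha|q/2}h^{(m-s/2)q}$, together with $dh/h=dt/t$ and $h^{(m-s/2)q}=2^{(m-s/2)q}t^{(m-s/2)q}$, one obtains
\[
\|D^\alpha f\|_{\dot{B}^{s-|\alpha|}_{p,q}}^q\le C_{n,\alpha}^q\,2^{((m-s/2)+|\alpha|/2)q}\,\|f\|_{\dot{B}^s_{p,q}}^q,
\]
with the usual supremum-modification when $q=\infty$. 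This is the claimed inequality with $c=C_{n,\alpha}\,2^{(m-s/2)+|\alpha|/2}$, depending only on $n,\alpha,s$ (the choice of $m$ being dictated by $s$). I do not expect a genuine obstacle here: the only points needing care are the distributional bookkeeping in the convolution identities, which is routine since $p_h\in\mathcal{S}$ and $f,D^\alpha f\in\mathcal{S}'$, and the observation that admissible $m$'s give equivalent quasinorms, which is standard.
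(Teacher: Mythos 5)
Your argument is correct and follows essentially the same route as the paper's: both split the semigroup via the identity $p_h=p_{h/2}*p_{h/2}$ (the paper writes it as $P_{2h}=P_h\circ P_h$), move $D^\alpha$ onto one factor, and invoke the bound $\|D^\alpha(P_hg)\|_p\le c_{n,\alpha}h^{-|\alpha|/2}\|g\|_p$ before changing variables in the thermic quasinorm. The only difference is that you prove this last bound yourself via Young's inequality and the self-similarity of the kernel, whereas the paper cites it from Flett.
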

This lemma is well known. See, for instance \cite[p.59, 242]{Tri}. But there, the norms in homogeneous Besov spaces are taken in terms of Littlewood-Paley decompositions. For completeness, we present a proof using the thermic description of the Besov norm. 

\begin{proof}
First, as $f$ is a tempered distribution and $p_h$ is in the Schwartz class, it is well known (cf. \cite[p.52-53]{Stri})that the convolution $P_hf=f*p_h$ is a $C^\infty(\mathbb{R}^n)$ function which is also a tempered distribution and it holds that
\begin{equation}\label{convolutionSSprima}
D^\alpha (P_h f)=p_h*(D^\alpha f)= f*(D^\alpha p_h).
\end{equation}
It is also known that \cite[p.393, Theorem 2 (ii)]{Flett} for any $h>0$ and $g$ in $L^p(\mathbb{R}^n)$
\begin{equation}\label{des_Flett}
\|D^\alpha(P_h g)\|_p\le c_{n,\alpha} h^{-|\alpha|/2}\|g\|_p
\end{equation}
Moreover, since $p_{2h}=p_h*p_h$, we have
\begin{equation}\label{pdosh}
P_{2h}f=P_h(P_h f).
\end{equation}
Then, by (\ref{convolutionSSprima}), (\ref{pdosh}) and (\ref{des_Flett}) we obtain
\begin{equation*}
\|P_{2h} (D^\alpha f)\|_{p}=\|D^\alpha(P_{2h}f)\|_p=\|D^\alpha(P_{h}(P_{h}f))\|_p\le c_{n,\alpha} h^{-|\alpha|/2}\|P_{h}f\|_p 
\end{equation*}
From this inequality, the Lemma immediately follows. 
\end{proof}

\section{Inequalities with Sobolev norms}

By $W_{p,q}^r(\R^n)$ ($r\in\NN$) we denote the space of all
functions $f\in L^{p,q}(\R^n)$ for which all weak derivatives $D^\nu
f$ $(\nu=(\nu_1,...,\nu_n))$  of order $|\nu|=\nu_1+\cdots +\nu_n\le
r$ exist and belong to $L^{p,q}(\R^n).$ As above, we denote
$$
\mathcal{D}^rf(x)=\sum_{|\nu|=r}|D^\nu f(x)|.
$$

\begin{teo}\label{Sobolev1}
Let $1\le p_1,p_2\le \infty$ and $1\le q_1,q_2\le \infty$. Assume
that $p_1\neq p_2$,  $q_1=1$ if $p_1=1$, and $q_i=\infty$  if
$p_i=\infty$  ($i=1,2$). Let $r\in\NN,$ $s<0$ and set
$\theta=r/(r-s)$. Let
\begin{equation}\label{100}
\frac{1}{p}=\frac{1-\theta}{p_1}+\frac{\theta}{p_2},\qquad \frac{1}{q}=\frac{1-\theta}{q_1}+\frac{\theta}{q_2}.
\end{equation}
Then, for any function $f\in W^r_{p_1,q_1}(\R^n)\cap
\dot{B}^s_{p_2,q_2}(\R^n)$,
\begin{equation}\label{1}
\|f\|_{p,q}\le c \|\mathcal{D}^rf\|_{p_1,q_1}^{1-\theta}\|f\|_{\dot{B}^s_{p_2,q_2}}^\theta,
\end{equation}
where $c$ doesn't depend on $f.$
\end{teo}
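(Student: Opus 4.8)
The plan is to reduce everything to pointwise estimates on rearrangements via the Gauss-Weierstrass semigroup, using the thermic description of the Besov quasinorm together with Lemma \ref{heat} (and its iterates). First I would treat the model case $r=1$ to fix ideas. Split $f = (f - P_h f) + P_h f$ for a parameter $h>0$ to be chosen. By Lemma \ref{heat}, $(f - P_h f)^{**}(t) \le c_n \sqrt{h}\,(\nabla f)^{**}(t)$, while for the smooth part one uses the mapping property \eqref{P_h}, namely $\|P_h f\|_\infty \le c h^{-n/(2p_2)}\|f\|_{p_2}$; more precisely, rewriting this in terms of the homogeneous Besov quasinorm with negative smoothness $s$, one has a bound of the shape $\|P_h f\|_{p_2,q_2\text{-type}} \lesssim h^{(s)/2}\|f\|_{\dot B^s_{p_2,q_2}}$ coming directly from the integral defining $\|f\|_{\dot B^s_{p_2,q_2}}$ (the $m$-th $h$-derivative of $P_h f$ controls $P_h f$ itself after integrating in $h$, since $s<0$ forces convergence at $h=\infty$). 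For general $r$, I would iterate: write $f - P_h f$ as a telescoping/Taylor-type expansion in powers of $(I - P_h)$ so that the $r$-th order term is governed by $\mathcal{D}^r f$ with a gain $h^{r/2}$, i.e. an estimate $(f - \text{(smooth part)})^{**}(t) \le c\, h^{r/2} (\mathcal{D}^r f)^{**}(t)$, the smooth part again being estimated against $\|f\|_{\dot B^s_{p_2,q_2}}$ with a factor $h^{s/2}$.

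The key step is then optimizing in $h$. At the level of (quasi)norms one arrives at an inequality of the form
\begin{equation*}
\|f\|_{p,q} \le c\left( h^{r/2}\|\mathcal{D}^r f\|_{p_1,q_1} + h^{s/2}\|f\|_{\dot B^s_{p_2,q_2}}\right),
\end{equation*}
valid for every $h>0$; choosing $h$ so that the two terms balance, i.e. $h^{(r-s)/2} \sim \|f\|_{\dot B^s_{p_2,q_2}}/\|\mathcal{D}^r f\|_{p_1,q_1}$, produces exactly the exponents $1-\theta$ and $\theta$ with $\theta = r/(r-s)$, and the relations \eqref{100} for $p$ and $q$ emerge from interpolating the two endpoint estimates (the $\mathcal{D}^r f$ term lives at $(p_1,q_1)$, the Besov term effectively at $(\infty,\infty)$ after \eqref{P_h}, so the exponent $\theta$ splits the reciprocals as stated — one checks $1/p = (1-\theta)/p_1 + \theta/p_2$ is consistent with the scaling $p_2$ coming through $s = r - n/p_2 \cdot(\cdots)$, more transparently via the homogeneity/dilation invariance of all three quasinorms).

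To make the interpolation rigorous at the level of Lorentz quasinorms rather than just $f^{**}$, I would instead argue directly with distribution functions. For fixed $h$, the splitting gives $f^*(2t) \le (f - P_h f)^*(t) + (P_h f)^*(t)$, hence a pointwise bound $f^*(2t) \le c h^{r/2}(\mathcal{D}^r f)^{**}(t) + c h^{s/2}\|f\|_{\dot B^s_{p_2,q_2}}$ (using $(P_h f)^*(t) \le \|P_h f\|_\infty$ for the second, or a refined Lorentz estimate of $P_h f$ if one wants the sharp secondary index). Now for each fixed $t$ choose $h = h(t)$ to balance, obtaining
\begin{equation*}
f^*(2t) \le c\, \big[(\mathcal{D}^r f)^{**}(t)\big]^{1-\theta}\,\|f\|_{\dot B^s_{p_2,q_2}}^{\theta}.
\end{equation*}
Raising to the power $q$, multiplying by $t^{q/p - 1}$, integrating in $t$, and using $1/p = (1-\theta)/p_1 + \theta/p_2$ together with Hölder's inequality (with exponents $1/(1-\theta)$ and $1/\theta$ applied to the two factors against the measure $t^{q/p-1}\,dt$, after writing $q$ via \eqref{100}), one bounds the left side by $c\,\|(\mathcal{D}^r f)^{**}\|_{p_1,q_1}^{1-\theta}\|f\|_{\dot B^s_{p_2,q_2}}^{\theta}$; finally $\|(\mathcal{D}^r f)^{**}\|_{p_1,q_1} \le c\|\mathcal{D}^r f\|_{p_1,q_1}$ by Hardy's inequality, which is where the hypothesis $q_1 = 1$ when $p_1 = 1$ (and $q_i = \infty$ when $p_i = \infty$) is used — it is exactly the range where $f \mapsto f^{**}$ is bounded on $L^{p_1,q_1}$, and it is the reason the $p_1 = 1$ case needs the truncation device mentioned in the introduction if one wants to avoid $f^{**}$ altogether.

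The main obstacle I anticipate is the endpoint $p_1 = 1$: there $f \mapsto f^{**}$ is not bounded, so the clean argument above via $(\mathcal{D}^r f)^{**}$ fails, and one must instead run the heat-kernel estimate on truncations $f_\lambda = \max(\min(f,\lambda),-\lambda) - (\text{level set corrections})$ so that $\nabla f_\lambda$ is supported where $|f|$ is near level $\lambda$, summing a geometric decomposition (as in \cite{PW}) to recover $\|\nabla f\|_1$ without passing through the maximal-type operator $(\cdot)^{**}$. A secondary technical point is justifying \eqref{Pf}-type integrability of $P_h f$ and the commutation of $D^\nu$ with $P_h$ in the iterated ($r\ge 2$) estimate, but these follow from Lemma \ref{heat}, Lemma \ref{lifting_termico}, and the fact that $P_h f$ is $C^\infty$; and one must check that the balancing $h(t)$ stays in $(0,\infty)$, which it does since $r>0>s$ makes $r - s>0$.
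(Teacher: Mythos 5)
Your proposal captures the right basic mechanism (split $f=(f-P_hf)+P_hf$, pseudo-Poincar\'e via Lemma \ref{heat}, then balance in $h$), but as written it proves a weaker statement than the theorem, for two separate reasons. First, in your pointwise balancing you replace $\|P_hf\|_{p_2}$ by $c\,h^{s/2}\|f\|_{\dot{B}^s_{p_2,q_2}}$ \emph{before} optimizing, so after choosing $h=h(t)$ the second factor is the constant $\|f\|_{\dot{B}^s_{p_2,q_2}}^{\theta}$ times a power of $t$. The H\"older step you then invoke (exponents $q_1/((1-\theta)q)$ and $q_2/(\theta q)$ against $dt/t$) cannot be carried out: the $q_2$-factor would be $\int_0^\infty \|f\|_{\dot B}^{q_2}\,dt/t=\infty$. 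What you actually get is the theorem only for $q_2=\infty$ (this is essentially the weak-type estimate \eqref{24}, which the paper uses only to establish the qualitative a priori fact $f\in L^{p,q}$). To retain $q_2<\infty$ one must keep the Besov information as a genuine function/sequence of the scale, $\alpha_k=2^{ks}\|P_{h_k}f\|_{p_2}$ with $\|\{\alpha_k\}\|_{\ell^{q_2}}\lesssim\|f\|_{\dot{B}^s_{p_2,q_2}}$, balance discretely by selecting $\kappa(j)$ as in \eqref{k(j)}, and then prove that the selection map does not concentrate --- the paper first regularizes both sequences with Lemma \ref{KP} so that $\kappa(j)<\kappa(j+N)$, whence $\sum_j\beta_{\kappa(j)}^{q_2}\le cN\|\{\beta_k\}\|_{\ell^{q_2}}^{q_2}$ (inequality \eqref{18}). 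This counting/regularization step is the crux of the balancing and is absent from your sketch.

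Second, and more importantly for the point of the theorem, your main line of argument runs through $(\mathcal{D}^rf)^{**}$ and Hardy's inequality, which fails precisely at $p_1=1$, $q_1=1$ (note that $f\mapsto f^{**}$ is \emph{not} bounded on $L^{1,1}=L^1$, contrary to what your penultimate paragraph suggests); this is exactly the Mart\'in--Milman route that the introduction says leaves $p=1$ open. You correctly anticipate that truncations are needed, but the fix is only gestured at, and it is the bulk of the paper's proof: one truncates $f$ at the levels $f^*(2^{-j\pm\nu})$, obtains pieces $f_j$ with $\nabla f_j=\chi_{H_j}\nabla f$ supported on almost disjoint level sets, applies Lemma \ref{heat} to each $f_j$ rather than to $f$, and controls $\sum_jA_j^{q_1}$ with $A_j=2^{j/p_1'}\int_{H_j}|\nabla f|$ via Lemmas \ref{sets} and \ref{functions}; the error terms $2^{-\nu}f^{**}(2^{-j-\nu})+f^*(2^{-j+\nu})$ are then absorbed by choosing $\nu$ large. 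None of this is routine, and without it the theorem is only obtained for $p_1>1$. A minor further point: for $r\ge2$ you propose a direct Taylor-type expansion in $(I-P_h)$, whereas the paper inducts on $r$ using the already-proved $r=1$ case together with the lifting Lemma \ref{lifting_termico}; your route could be made to work but would require proving the iterated pseudo-Poincar\'e inequality, which you do not do.
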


\begin{proof} First we consider the case $r=1$.

For any $A\ge 0,$ set
$$
F(x,A)= \min(|f(x)|, A)\operatorname{sign}(f(x)).
$$
The same  reasonings as in \cite[2.1.4, 2.1.8]{Zi} show that $F(x,A)$ can be modified on a set of measure zero so that the modified function is locally absolutely continuous on almost all lines parallel to the coordinate axes (we shall call this  {\it the $W-$property}). Set now for any $j\in \ZZ$
$$
f_j(x)=F(x, f^*(2^{-j-\nu}))- F(x, f^*(2^{-j+\nu}))
$$
(where a number $\nu\in\NN$ will be chosen later). Then each $f_j$ has the $W-$property. 
Let
\begin{equation*}
H_j=\{x\in\R^n: f^*(2^{-j+\nu})<|f(x)|<f^*(2^{-j-\nu})\},
\end{equation*}
(clearly some $H_j$ may be empty). Then $\nabla f_j(x)=0$ for almost all $x\notin H_j$ and $\nabla
f_j(x)=\nabla f(x)$ for almost all $x\in H_j$. Thus,
\begin{equation}\label{2}
\nabla f_j(x)=\chi_{H_j}(x)\nabla f(x)\qquad \text{for almost all}\quad x\in\R^n.
\end{equation}
It follows from the definition of $H_j$ that
\begin{equation}\label{nonintersect}
H_{j}\cap H_{k}=\emptyset\quad\mbox{if}\quad |j-k|\ge 2\nu.
\end{equation}
Besides, we have
\begin{equation}\label{measures}
|H_j|\le  2^{-j+\nu}  \quad (j\in\NN).
\end{equation}
Denote
\begin{equation}\label{300}
A_j= 2^{j/p_1'}\int_{H_j}|\nabla f(x)|\,dx.
\end{equation}

 We shall show that
\begin{equation}\label{3}
\left(\sum_{j\in \ZZ} A_j^{q_1}\right)^{1/q_1}\le K \|\nabla f\|_{p_1,q_1}, \quad\mbox{where}\,\, K=2^{2\nu}\max\left(1,\frac{p_1'}{q_1'}\right).
\end{equation}

First we assume that $1\le q_1\le p_1<\infty.$ By H\"older's
inequality and (\ref{measures}),
\begin{equation}\label{tres_bis}
A_j^{q_1}\le 2^{jq_1/p_1'}|H_j|^{q_1-1}\int_{H_j}|\nabla f(x)|^{q_1}\,dx.
\end{equation}

For a fixed integer $0\le m<\ 2\nu$, consider the following proper subset of $\mathbb{Z}$:
\begin{equation*}
J=\{2\nu i +m \in \mathbb{Z}: i\in \mathbb{Z}, |H_{2\nu i +m}|>0\}.
\end{equation*}
By (\ref{nonintersect}), the sets $H_j$, $j\in J$ are pairwise disjoint. Further, set
$\mu_j=\sum_{k\in J, k\ge j}|H_k|$ for any $j\in J$.
Note that
\begin{equation}\label{otra_medida}
 0<|H_j|\le \mu_j\le 2^{-j+\nu}
\end{equation}
since 
\begin{equation*}
\bigcup_{k\in J, k\ge j} H_k \subset\{x\in \mathbb{R}^n: f^*(2^{-j+\nu})<|f(x)|\}\quad (j\in J).
\end{equation*}
Then, by (\ref{tres_bis}), (\ref{otra_medida}) and Lemma \ref{sets}, we have
\begin{equation*}
\sum_{i\in \mathbb{Z}}A_{2\nu i +m}^{q_1}= \sum_{j\in J} A_j^{q_1}\le
\end{equation*}
\begin{equation*}
\le 2^{\nu q_1/p_1'}\sum_{j\in J}\mu_j^{q_1/p_1-1}\int_{H_j}|\nabla f(x)|^{q_1}dx \le 2^{\nu q_1/p_1'} \|\nabla f\|_{p_1,q_1}^{q_1}.
\end{equation*}
Thus,
\begin{equation}\label{3-1}
\sum_{j\in \ZZ} A_j^{q_1}\le 2\nu 2^{\nu q_1/p_1'} \|\nabla f\|_{p_1,q_1}^{q_1}\quad \mbox{if}\quad 1\le q_1\le p_1<\infty.
\end{equation}

Let now $1<p_1<q_1<\infty.$ First, we have, applying H\"older's
inequality and taking into account (\ref{2}) and (\ref{measures})
$$
\begin{aligned}
&A_j\le 2^{j/p_1'}\int_0^{|H_j|}(\nabla f_j)^*(t)\,dt
\le 2^{j/p_1'}\left(\int_0^{|H_j|}t^{q_1'/p_1'}\frac{dt}{t}\right)^{1/q_1'}\|\nabla f_j\|_{p_1,q_1}\\
&= 2^{j/p_1'}|H_j|^{1/p_1'}\left(\frac{p_1'}{q_1'}\right)^{1/q_1'}\|\nabla f_j\|_{p_1,q_1}\le
2^{\nu/p_1'}\left(\frac{p_1'}{q_1'}\right)^{1/q_1'}\|\chi_{H_j}\nabla f\|_{p_1,q_1}.
\end{aligned}
$$
Using this estimate, (\ref{nonintersect}) and applying Lemma \ref{functions}, we obtain
that
$$
\sum_{j\in \ZZ} A_j^{q_1}\le (2\nu)^{q_1/p_1} 2^{\nu q_1/p_1'}\left(\frac{p_1'}{q_1'}\right)^{q_1-1}\|\nabla f\|_{p_1,q_1}^{q_1}, \quad 1<p_1<q_1<\infty.
$$
Together with (\ref{3-1}), this implies (\ref{3}) for the case
$p_1<\infty, q_1<\infty.$ In the case $q_1=\infty, 1<p_1\le \infty$
inequality (\ref{3}) is obvious.

We shall estimate $f^{**}(2^{-j})$. Observe that if 
$$f^*(2^{-j})\le |f(x)|\le f^*(2^{-j-\nu}),$$
then 
$$
|f(x)|=|f_j(x)|+f^*(2^{-j+\nu}).
$$
Thus,
$$
f^*(t)=f_j^*(t)+ f^*(2^{-j+\nu}) \quad\mbox{for}\quad 2^{-j-\nu}\le t\le 2^{-j}.
$$ 
Using this observation, we get
$$
f^{**}(2^{-j})= 2^j\left(\int_0^{2^{-j-\nu}}f^*(t)dt+\int_{2^{-j-\nu}}^{2^{-j}}[f_j^*(t)+ f^*(2^{-j+\nu})]dt\right)
$$
\begin{equation}\label{4}
\le 2^{-\nu}f^{**}(2^{-j-\nu}) + f_j^{**}(2^{-j})+f^*(2^{-j+\nu}).
\end{equation}

Further, for any $k\in\ZZ$, choose $h_k\in [2^{-2(k+1)},2^{-2k}]$
such that\footnote{In fact, it is known that $\|P_h f\|_{p_2}$ decreases in $h$ (cf. \cite[Theorem 4(ii)]{Flett}).}
\begin{equation*}
\|P_{h_k}f\|_{p_2}=\min\{ \|P_h f\|_{p_2}: h\in [2^{-2(k+1)},2^{-2k}]\}.
\end{equation*}
We have
\begin{equation}\label{5}
f_j^{**}(2^{-j})\le (f_j-P_{h_k}f_j)^{**}(2^{-j})+ (P_{h_k}f_j)^{**}(2^{-j})
\end{equation}
for all $j,k\in \ZZ.$ By (\ref{f-Pf}),
$$
(f_j-P_{h_k}f_j)^{**}(2^{-j})\le c \sqrt{h_k} (\nabla f_j)^{**}(2^{-j})
$$
\begin{equation}\label{6}
\le c 2^{j-k}\int_{H_j}|\nabla f(x)|\,dx=c2^{j/p_1-k}A_j,
\end{equation}
where $c$ depends only on $n$. Further,
\begin{equation*}
P_{h_k}f_j =P_{h_k}(f_j-f)+P_{h_k}f.
\end{equation*}
We have
\begin{equation}\label{7}
(P_{h_k}f)^{**}(2^{-j})\le 2^{j/p_2}\|P_{h_k}f\|_{p_2}\equiv 2^{j/p_2-ks}\alpha_k,
\end{equation}
where $\alpha_k=2^{ks}\|P_{h_k}f\|_{p_2}$. Besides, by (\ref{Pf**}),
\begin{equation}\label{8}
(P_{h_k}(f_j-f))^{**}(2^{-j})\le (f_j-f)^{**}(2^{-j}).
\end{equation}
If $|f(x)|>f^*(2^{-j-\nu})$, then
$$
|f(x)-f_j(x)|=|f(x)|-f^*(2^{-j-\nu})+f^*(2^{-j+\nu}).
$$
If $|f(x)|\le f^*(2^{-j-\nu})$, then
$$
|f(x)-f_j(x)|\le f^*(2^{-j+\nu}).
$$
Thus,
\begin{equation}\label{9}
(f-f_j)^{**}(2^{-j})\le 2^{-\nu}f^{**}(2^{-j-\nu})+f^*(2^{-j+\nu}).
\end{equation}
Applying inequalities (\ref{4}) -- (\ref{9}), we obtain
$$
f^{**}(2^{-j})\le c [2^{j/p_1-k}A_j+2^{j/p_2-ks}\a_k]
$$
\begin{equation}\label{10}
+2^{-\nu+1}f^{**}(2^{-j-\nu})+2f^*(2^{-j+\nu}),
\end{equation}
where $A_j$ is defined by (\ref{300}) and
$\a_k=2^{ks}\|P_{h_k}f\|_{p_2}.$ Set $d=1/p_1-1/p_2;$ by our
assumption, $d\not=0.$ Choose
\begin{equation}\label{delta}
0<\delta< \min(q_1|d|, \,q_2(1-s)).
\end{equation}
Applying Lemma \ref{KP}, we obtain that there exists a sequence
$\{B_j\}_{j\in\ZZ}$ of positive numbers such that
\begin{equation}\label{11}
A_j\le B_j \text{ for all }j\in\ZZ,
\end{equation}
\begin{equation}\label{12}
\|\{B_j\}\|_{l^{q_1}}\le (1-2^{-\delta})^{-2/q_1}\|\{A_j\}\|_{l^{q_1}},
\end{equation}
and
\begin{equation}\label{13}
2^{-\delta/q_1}\le B_{j+1}/B_j \le 2^{\delta/q_1},\quad j\in\ZZ.
\end{equation}
Further,
\begin{equation*}
\|\{\a_k\}\|_{l^{q_2}}= \left(\sum_{k\in\ZZ}2^{ksq_2}\|P_{h_k}f\|_{p_2}^{q_2}\right)^{1/q_2}\le c\|f\|_{\dot{B}^s_{p_2,q_2}}.
\end{equation*}
Applying  Lemma \ref{KP}, we obtain a sequence
$\{\beta_k\}_{k\in\ZZ}$ of positive numbers such that
\begin{equation}\label{14}
\alpha_k\le \beta_k \text{ for all }k\in\ZZ,
\end{equation}
\begin{equation}\label{15}
\|\{\beta_k\}\|_{\ell^{q_2}}\le c(1-2^{-\d})^{-2/q_2}\|f\|_{\dot{B}^s_{p_2,q_2}},
\end{equation}
and
\begin{equation}\label{16}
2^{-\delta/q_2}\le \beta_{k+1}/\beta_k \le 2^{\delta/q_2}\qquad (k\in\ZZ).
\end{equation}
Now we have from (\ref{10}), (\ref{11}) and (\ref{14})
$$
f^{**}(2^{-j})\le c [2^{j/p_1-k}B_j+2^{j/p_2-ks}\beta_k]
$$
\begin{equation}\label{17}
+2^{-\nu+1}f^{**}(2^{-j-\nu})+2f^*(2^{-j+\nu}).
\end{equation}
Note that, by (\ref{delta}) and (\ref{16}), $2^{k(1-s)}\beta_k$
strictly increases on $k$, and
$$
\lim_{k\to +\infty}2^{k(1-s)}\beta_k=\infty.
$$
Since $\{\b_k\}$ is bounded, we have also that
$$
\lim_{k\to -\infty}2^{k(1-s)}\beta_k=0.
$$
 Thus, for any fixed $j\in\ZZ$
there exists an integer  $\kappa(j)$  such that
\begin{equation}\label{k(j)}
2^{\kappa(j)(1-s)}\beta_{\kappa(j)}\le 2^{jd}B_j< 2^{(\kappa(j)+1)(1-s)}\beta_{\kappa(j)+1}
\end{equation}
(where $d=1/p_1-1/p_2).$ Choose a natural number
$$
N>\frac{1-s+\delta/q_2}{|d|-\delta/q_1}.
$$

Suppose first that $p_1<p_2$ and thus $d>0$. Applying inequalities
(\ref{13}), (\ref{16}), (\ref{k(j)}), and taking into account the
choice of $N$ and $\delta$, we obtain that for any $j\in\ZZ$
$$
\begin{aligned}
&2^{\kappa(j)(1-s)}\beta_{\k(j)}\le 2^{jd}B_j
\le 2^{(j+N)d}B_{j+N} 2^{-N(d-\delta/q_1)}\\
&<2^{(j+N)d}B_{j+N} 2^{-(1-s+\delta/q_2)}
< 2^{\k(j+N)(1-s)}\beta_{\k(j+N)+1}2^{-\delta/q_2}\\
 &\le  2^{\k(j+N)(1-s)}\beta_{\k(j+N)}.
\end{aligned}
$$
Since $2^{k(1-s)}\beta_k$ increases, this inequality implies that
$\k(j)<\k(j+N)$.
 Thus,  using (\ref{15}), we have that
\begin{equation}\label{18}
\left(\sum_{j\in\ZZ}\beta_{\k(j)}^{q_2}\right)^{1/q_2}\le c'N^{1/q_2}\|f\|_{\dot{B}^s_{p_2,q_2}}.
\end{equation}
Now we consider the case $p_2<p_1$ (that is, $d<0$). Following the
same reasonings
 as in the previous case, we get
$$
\begin{aligned}
&2^{\k(j+N)(1-s)}\beta_{\k(j+N)}\le 2^{(j+N)d}B_{j+N}\\
&\le 2^{jd}B_{j} 2^{-N(|d|-\delta/q_1)}
<2^{jd}B_{j}  2^{-(1-s+\delta/q_2)}\\
&< 2^{\k(j)(1-s)}\beta_{\k(j)+1}2^{-\delta/q_2}
 \le 2^{\k(j)(1-s)}\beta_{\k(j)}.
\end{aligned}
$$
Then, $\k(j+N)<\k(j)$, and (\ref{18}) holds in this case, too.

Using inequalities (\ref{17}) and (\ref{k(j)}), and taking into
account that
$$
\theta(1-s)=1\quad\mbox{and}\quad \frac{1}{p}=\frac{1-\theta}{p_1}+\frac{\theta}{p_2},
$$
 we obtain
\begin{equation}\label{19}
f^{**}(2^{-j})\le c 2^{j/p}B_j^{1-\theta}\beta_{\k(j)}^\theta
+2^{-\nu+1}f^{**}(2^{-j-\nu})+2f^*(2^{-j+\nu}).
\end{equation}
Denote $\s_j= 2^{j/p}B_j^{1-\theta}\beta_{\k(j)}^\theta.$ Recall that
$$
\frac{1}{q}=\frac{1-\theta}{q_1}+\frac{\theta}{q_2}.
$$
Thus, applying H\"older's inequality, we have
$$
\left(\sum_{j\in\ZZ}2^{-jq/p}\s_j^q\right)^{1/q}=\left(\sum_{j\in\ZZ}B_j^{(1-\theta)q}\beta_{\k(j)}^{\theta q}\right)^{1/q}
$$
$$
\le \left(\sum_{j\in\ZZ}B_j^{q_1}\right)^{(1-\theta)/q_1}\left(\sum_{j\in\ZZ}\b_{\k(j)}^{q_2}\right)^{\theta/q_2}.
$$
Using this estimate and inequalities (\ref{3}), (\ref{12}),
(\ref{15}), and (\ref{18}), we obtain
\begin{equation}\label{20}
\left(\sum_{j\in\ZZ}2^{-jq/p}\s_j^q\right)^{1/q}\le c \|\nabla f\|_{p_1,q_1}^{1-\theta}\|f\|_{\dot{B}^s_{p_2,q_2}}^\theta.
\end{equation}

Now we assume that $f\in L^{p,q}$ and we consider the last two terms
on the right hand side of (\ref{19}). We have
$$
2^{-\nu }\left(\sum_{j\in\ZZ}2^{-jq/p}f^{**}(2^{-j-\nu})^q\right)^{1/q}
$$
\begin{equation}\label{21}
= 2^{-\nu/p'}\left(\sum_{j\in\ZZ}2^{-jq/p}f^{**}(2^{-j})^q\right)^{1/q}
\end{equation}
and
$$
\left(\sum_{j\in\ZZ}2^{-jq/p}f^{**}(2^{-j+\nu})^q\right)^{1/q}
$$
\begin{equation}\label{22}
=
2^{-\nu/p}\left(\sum_{j\in\ZZ}2^{-jq/p}f^{**}(2^{-j})^q\right)^{1/q}.
\end{equation}
Since $p_1\not=p_2,$ we have that $1<p<\infty.$ Therefore we can
choose $\nu\in \NN$ such that $2^{-\nu/p'}+2^{-\nu/p}<1/4.$ Then,
applying (\ref{19}) -- (\ref{22}), we obtain
\begin{equation}\label{23}
\left(\sum_{j\in\ZZ}2^{-jq/p}f^{**}(2^{-j})^q\right)^{1/q}\le c \|\nabla f\|_{p_1,q_1}^{1-\theta}\|f\|_{\dot{B}^s_{p_2,q_2}}^\theta.
\end{equation}
This proves our theorem for $r=1$, but with additional assumption
that $f\in L^{p,q}$. It remains to show that this assumption in fact
is true (cf. \cite[p.663]{Led}). For this, we prove the following weak-type inequality
\begin{equation}\label{24}
f^{**}(t)\le c (\nabla f)^{**}(t)^{1-\theta}t^{-\theta/p_2}\|f\|_{\dot{B}^s_{p_2,q_2}}^{\theta}.
\end{equation}

First, by (\ref{f-Pf}),
$$
(f-P_hf)^{**}(t)\le c \sqrt{h}(\nabla f)^{**}(t).
$$
Besides,
$$
(P_hf)^{**}(t)\le t^{-1/p_2}\|P_hf\|_{p_2}.
$$
For any $\mu>0$, find $h_\mu\in [\mu,2\mu]$ such that
$$
\|P_{h_\mu} f\|_{p_2}=\min_{h\in [\mu,2\mu]}\|P_hf\|_{p_2}.
$$
Then
$$
\|f\|_{\dot{B}^s_{p_2,q_2}}^{q_2}\ge \int_\mu^{2\mu} h^{-sq_2/2}\|P_hf\|_{p_2}^{q_2}\,\frac{dh}{h}
$$
$$
\ge c_1\|P_{h_\mu} f\|_{p_2}^{q_2}\mu^{-sq_2/2} \quad (c_1>0).
$$
Using  estimates given above,   we have
$$
f^{**}(t)\le (f-P_{h_\mu}f)^{**}(t)+ (P_{h_\mu}f)^{**}(t)
$$
$$
\le c\left[\mu^{1/2}(\nabla f)^{**}(t)+\mu^{s/2}t^{-1/p_2}\|f\|_{\dot{B}^s_{p_2,q_2}}\right]
$$
for any $\mu>0$. Taking
$$
\mu =\left(\frac{t^{-1/p_2}\|f\|_{\dot{B}^s_{p_2,q_2}}}{(\nabla f)^{**}(t)}\right)^{2\theta},
$$
we obtain (\ref{24}).

Since $\nabla f\in L^{p_1,q_1}(\R^n)$, we have that $t^{1/p_1}(\nabla f)^{**}(t)$ is bounded. Thus, it follows from (\ref{24}) that $t^{1/p}f^{**}(t)$ is also bounded. In consequence, there exists $k_0\in\mathbb{Z}$ such that 
\begin{equation*}
2^{-k_0/p}f^{**}(2^{-k_0})\ge t^{1/p}f^{**}(t)/2\text{ for any }t>0.
\end{equation*} 
Let $\nu>0$. Then, for any integer $K\ge |k_0|$,
\begin{equation*}
2^{-\nu q}\sum_{j=-K}^K 2^{-jq/p}f^{**}(2^{-j-\nu})^q=2^{-\nu q/p'}\sum_{j=-K+\nu}^{K+\nu} 2^{-jq/p}f^{**}(2^{-j})^q\le 
\end{equation*}
\begin{equation*}
\le 2^{-\nu q/p'}\left(\sum_{j=-K}^{K} 2^{-jq/p}f^{**}(2^{-j})^q+\sum_{j=K+1}^{K+\nu} 2^{-jq/p}f^{**}(2^{-j})^q\right)\le 
\end{equation*}
\begin{equation*}
\le 2^{-\nu q/p'}\left(\sum_{j=-K}^{K} 2^{-jq/p}f^{**}(2^{-j})^q+\nu 2^q 2^{-k_0q/p}f^{**}(2^{-k_0})^q\right)\le 
\end{equation*}
\begin{equation*}
\le 2^{-\nu q/p'}\left(1+ 2^{q} \nu\right)    \sum_{j=-K}^{K} 2^{-jq/p}f^{**}(2^{-j})^q.
\end{equation*}

Similarly, we have
\begin{equation*}
\sum_{j=-K}^K 2^{-jq/p}f^{**}(2^{-j+\nu})^q=2^{-\nu q/p}\sum_{j=-K-\nu}^{K-\nu} 2^{-jq/p}f^{**}(2^{-j})^q\le 
\end{equation*}
\begin{equation*}
 \le 2^{-\nu q/p}(1+2^{q}\nu)\sum_{j=-K}^{K} 2^{-jq/p}f^{**}(2^{-j})^q.
\end{equation*}

We apply these estimates (for $\nu$ big enough) to the
inequality
$$
f^{**}(2^{-j})\le c \s_j
+2^{-\nu+1}f^{**}(2^{-j-\nu})+2f^*(2^{-j+\nu})
$$
(see (\ref{19})). Taking into account (\ref{20}), we obtain that for $K\in\NN$, $K\ge |k_0|$
$$
\left(\sum_{j=-K}^K 2^{-jq/p}f^{**}(2^{-j})^q\right)^{1/q}\le c \|\nabla f\|_{p_1,q_1}^{1-\theta}\|f\|_{\dot{B}^s_{p_2,q_2}}^\theta.
$$
This completes the proof of our theorem for $r=1.$

Now we apply induction. Assume that theorem is true for $r-1$ $(r\ge
2).$ Set
$$
\theta'=\frac{1}{r-s} \quad\mbox{and}\quad \bar{\theta}= \frac{r-1}{r-1-s}.
$$
Further, let
\begin{equation}\label{pbar}
\frac{1}{\bar{p_1}}= \frac{1-\theta'}{p_1}+ \frac{\theta'}{p_2}\qquad \frac{1}{\bar{q_1}}= \frac{1-\theta'}{q_1}+ \frac{\theta'}{q_2}.
\end{equation}
Observe that, since $p_1\neq p_2$, then $\bar{p_1}\neq p_2$.
Moreover, using (\ref{100}) and (\ref{pbar}), we obtain
\begin{equation*}
 \frac{1}{p}=\frac{1-\bar{\theta}}{\bar{p_1}}+\frac{\bar{\theta}}{p_2},\quad \frac{1}{q}=\frac{1-\bar{\theta}}{\bar{q_1}}+\frac{\bar{\theta}}{q_2}.
\end{equation*}
Thus, by our inductive assumption
\begin{equation}\label{r-1}
\|f\|_{p,q}\le c \|\mathcal{D}^{r-1}f\|_{\bar{p_1},\bar{q_1}}^{1-\bar{\theta}}\|f\|_{\dot{B}^s_{p_2,q_2}}^{\bar{\theta}}
\end{equation}

We have $\theta'=1/(1-s'),$ where $s'=s+1-r<0.$ Thus, as it was
already proved, for any function $g\in W_{p_1,q_1}^1(\R^n)\cap
\dot{B}^{s+1-r}_{p_2,q_2}(\R^n)$,
\begin{equation*}
\|g\|_{\bar{p_1},\bar{q_1}}\le c \|Dg \|_{p_1,q_1}^{1-\theta'}\|g\|_{\dot{B}_{p_2,q_2}^{s+1-r}}^{\theta'}
\end{equation*}
We apply this inequality to each of the derivatives $D^{\alpha}f$ of
order $|\alpha|=r-1.$  Taking into account that
$$
\|D^{\alpha}f\|_{\dot{B}_{p_2,q_2}^{s+1-r}}\le c
\|f\|_{\dot{B}_{p_2,q_2}^{s}}
$$
(see Lemma \ref{lifting_termico}), we obtain
\begin{equation}\label{r=1}
\|\mathcal{D}^{r-1}f\|_{\bar{p_1},\bar{q_1}}\le c \|\mathcal{D}^rf \|_{p_1,q_1}^{1-\theta'}\|f\|_{\dot{B}_{p_2,q_2}^{s}}^{\theta'}
\end{equation}
We have $(1-\bar{\theta})(1-\theta')=1-\theta$ and
$\bar{\theta}+(1-\bar{\theta})\theta'=\theta$.  Hence, (\ref{r-1})
and (\ref{r=1}) imply (\ref{1}).

\end{proof}

\begin{rem}\label{constant} The explicit value of the constant $c$ in (\ref{1}) is rather complicated. From (\ref{3}), we can see that this constant   remains bounded if
$p_1$ and $q_1$ tend to 1 in such a way that $p_1'/q_1'$ is bounded
(for example, if $1<q_1\le p_1$.) However, if $q_1>1$ is fixed and
$p_1\to 1+,$ then $c\to\infty$.

Also, $c$ blows up if $1/p_1-1/p_2\to 0$ (see (\ref{delta}),
(\ref{12}), and (\ref{15})).

\end{rem}

A special case of Theorem \ref{Sobolev1} is the following theorem.
\begin{teo}\label{Sobolev2} Let $r\in\NN,\,\,1\le r<n, \,\, 1\le p<n/r,$ and let $p^*=np/(n-rp).$ Then for any function $f\in W_p^r(\R^n)$
\begin{equation}\label{sobolev}
\|f\|_{p^*,p}\le c\|\mathcal {D}^rf\|_p^{1-pr/n} \|f\|_{\dot{B}^{r-n/p}_{\infty,p}}^{pr/n}.
\end{equation}
\end{teo}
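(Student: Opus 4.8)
The plan is to derive Theorem~\ref{Sobolev2} as the special case of Theorem~\ref{Sobolev1} obtained by setting $p_1=p$, $q_1=p$, $p_2=\infty$, $q_2=\infty$, and $s=r-n/p$. First I would check that this choice of parameters is admissible for Theorem~\ref{Sobolev1}: we need $p_1\neq p_2$, which holds since $p<n/r<\infty$; we need $q_1=1$ when $p_1=1$, and indeed if $p=1$ then $q_1=p=1$; and we need $q_i=\infty$ when $p_i=\infty$, which holds for $i=2$ since $q_2=\infty$. Also $s=r-n/p<0$ precisely because $p<n/r$, so the hypothesis $s<0$ is satisfied. Thus Theorem~\ref{Sobolev1} applies with these parameters.

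Next I would compute the resulting exponents. With $s=r-n/p$ we get $r-s=n/p$, so $\theta=r/(r-s)=rp/n$, which is exactly the exponent $pr/n$ appearing in (\ref{sobolev}), and $1-\theta=1-pr/n$. For the primary Lebesgue exponent, (\ref{100}) gives
\begin{equation*}
\frac{1}{p}=\frac{1-\theta}{p_1}+\frac{\theta}{p_2}=\frac{1-rp/n}{p}+\frac{rp/n}{\infty}=\frac{1}{p}-\frac{r}{n},
\end{equation*}
wait---one must be careful: the left side should be $1/p_{\mathrm{new}}$, not $1/p$. Writing $p_{\ast}$ for the target exponent, $1/p_{\ast}=(1-rp/n)/p=(n-rp)/(np)$, hence $p_{\ast}=np/(n-rp)=p^{\ast}$, matching the statement. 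For the secondary index, (\ref{100}) gives $1/q=(1-\theta)/q_1+\theta/q_2=(1-rp/n)/p+0=(n-rp)/(np)=1/p^{\ast}$; but this is the secondary index $q$ of the Lorentz quasinorm on the left of (\ref{1}). Here I should double-check against the claimed $\|f\|_{p^{\ast},p}$: we want the secondary index to equal $p$, not $p^{\ast}$. So in fact the correct substitution uses $q_1=p$ but one must recompute---actually with $q_1=p$ and $q_2=\infty$ one gets $1/q=(1-\theta)/p$, giving $q=p/(1-\theta)=p/(1-rp/n)=np/(n-rp)=p^{\ast}$, so Theorem~\ref{Sobolev1} directly yields $\|f\|_{p^{\ast},p^{\ast}}=\|f\|_{p^{\ast}}$ on the left. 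To obtain the stronger Lorentz quasinorm $\|f\|_{p^{\ast},p}$ claimed in (\ref{sobolev}), the right choice is $q_1=q_2$ chosen so that $1/q=1/p$; since $q_2=\infty$ is forced by $p_2=\infty$, this is impossible directly, so the sharp Lorentz version must instead be extracted from the intermediate estimate (\ref{23})--(\ref{20}) in the proof of Theorem~\ref{Sobolev1}, where summing $2^{-jq/p}f^{\ast\ast}(2^{-j})^q$ with the exponents $p_1=p$, $s=r-n/p$ in fact controls the quasinorm with secondary index $q_1=p$.

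Accordingly, the cleanest route is: invoke Theorem~\ref{Sobolev1} with $p_1=q_1=p$, $p_2=q_2=\infty$, $s=r-n/p$, $\theta=rp/n$, to get the inequality (\ref{1}) with left-hand side $\|f\|_{p^{\ast},q}$ where $q$ is computed from (\ref{100}); then observe that the summation bound (\ref{20}) actually delivers the finer Lorentz norm because $\|f\|_{p^{\ast},p}^{p}\asymp\sum_{j}2^{-jp/p_1^{\ast}}\cdots$ matches the left side of the discretized estimate when $q_1=p\le p^{\ast}$, using the discrete characterization of Lorentz norms via $f^{\ast\ast}$ on dyadic points together with the fact that $L^{p^{\ast},p}$ embeds appropriately. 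In practice the authors will likely just state that Theorem~\ref{Sobolev2} "is a special case" and let the reader verify the parameter substitution; the only genuine content is the arithmetic of the exponents $\theta=rp/n$, $p^{\ast}=np/(n-rp)$, $s=r-n/p$, plus noting that $\dot B^{s}_{p_2,q_2}=\dot B^{r-n/p}_{\infty,p}$ requires $q_2=p$ rather than $q_2=\infty$---so strictly one applies Theorem~\ref{Sobolev1} with $q_2=p$, which is permitted since $p_2=\infty$ forces $q_2=\infty$ only in the hypothesis, and in that borderline case one instead appeals to the monotonicity (\ref{second_index}) of Besov spaces in the second index to replace $\dot B^{r-n/p}_{\infty,\infty}$ by the larger $\dot B^{r-n/p}_{\infty,p}$ on the right-hand side, which only strengthens the estimate. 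The main obstacle, then, is purely bookkeeping: matching the Lorentz secondary index $p$ on the left and the Besov secondary index $p$ on the right with the rigid constraint $q_2=\infty$ when $p_2=\infty$; this is resolved by the embedding $\dot B^{r-n/p}_{\infty,\infty}\supset\dot B^{r-n/p}_{\infty,p}$ and the observation that the proof of Theorem~\ref{Sobolev1} in fact produces the $L^{p^{\ast},p}$-quasinorm when $q_1=p_1=p$.
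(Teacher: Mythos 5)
You correctly identify that the theorem is meant to be the specialization of Theorem \ref{Sobolev1} to $p_1=q_1=p$, $p_2=\infty$, $s=r-n/p$, and your exponent arithmetic ($\theta=rp/n$, $p^{*}=np/(n-rp)$) agrees with the paper, whose entire ``proof'' is the remark that this is a special case. The genuine issue you isolate --- how to get the secondary index $p$ on the left while the hypothesis of Theorem \ref{Sobolev1} demands $q_2=\infty$ when $p_2=\infty$ --- is real, but both mechanisms you offer to resolve it fail. First, applying Theorem \ref{Sobolev1} with $q_2=\infty$ and then invoking (\ref{second_index}) to replace $\|f\|_{\dot{B}^{r-n/p}_{\infty,\infty}}$ by $\|f\|_{\dot{B}^{r-n/p}_{\infty,p}}$ only enlarges the right-hand side; the left-hand side remains $\|f\|_{p^{*},p^{*}}=\|f\|_{p^{*}}$, which is strictly weaker than the claimed $\|f\|_{p^{*},p}$ (recall $L^{p^{*},p}\subsetneq L^{p^{*}}$ since $p<p^{*}$), so this route recovers only a Ledoux-type inequality, not (\ref{sobolev}). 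Second, the assertion that the proof of Theorem \ref{Sobolev1} ``in fact produces the $L^{p^{*},p}$-quasinorm when $q_1=p_1=p$'' is false: the H\"older step (\ref{20}) ties the output index to both inputs through $1/q=(1-\theta)/q_1+\theta/q_2$, so with $q_2=\infty$ the discretized sum in (\ref{23}) is taken with $q=q_1/(1-\theta)=p^{*}$, not with $q=p$.

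The correct (and intended) resolution is simply to take $q_2=p$, i.e.\ exactly the parameter choice you first wrote down and then abandoned. The restriction ``$q_i=\infty$ if $p_i=\infty$'' is substantive only for $i=1$, where $L^{\infty,q_1}$ with $q_1<\infty$ is trivial and $W^r_{\infty,q_1}$ would be meaningless; for $i=2$ the space $\dot{B}^{s}_{\infty,q_2}$ with $q_2<\infty$ is a perfectly good space, and every step of the proof of Theorem \ref{Sobolev1} involving the second index --- in particular the bound $\|\{\a_k\}\|_{\ell^{q_2}}\le c\|f\|_{\dot{B}^s_{p_2,q_2}}$ and the application of Lemma \ref{KP} to $\{\a_k\}$ --- goes through verbatim for $p_2=\infty$, $q_2=p$. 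With $p_1=q_1=q_2=p$ and $p_2=\infty$, formula (\ref{100}) gives $1/q=(1-\theta)/p+\theta/p=1/p$, so the left-hand side is the desired $\|f\|_{p^{*},p}$. (One should also note, via (\ref{embed2}), that $f\in W^r_p$ indeed belongs to $\dot{B}^{r-n/p}_{\infty,p}$, so the intersection hypothesis of Theorem \ref{Sobolev1} is satisfied.)
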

By virtue of (\ref{embed2}), this result gives a refinement of
Sobolev type inequality (\ref{Sobolev_Lorentz}).

\section{Auxiliary propositions for Triebel-Lizorkin and Besov inequalities}

The following lemma presents a modification of Lemma 2.1 in
\cite{KyP}. It can be interpreted as a continuous counterpart of
Lemma \ref{KP}.
\begin{lem}\label{masmenosdelta}
Let $\gamma>0$. Let $\phi\in L^{q}(\R_+,dt/t)$ be a non-negative function such that
$\phi(t)t^\gamma$ increases or $\phi(t)t^{-\gamma}$ decreases. Then,
for any $\delta>0$, there exists a continuously differentiable
function $\widetilde{\phi}$ on $\R_+$ such that:
\begin{enumerate}
\item[(i)] $\phi(t)\le \widetilde{\phi}(t)$, $t\in\R_+$;
\item[(ii)]  $\widetilde{\phi}(t)t^{\delta}$ increases and $\widetilde{\phi}(t)t^{-\delta}$ decreases on $\R_+$.
\item[(iii)] $\|\widetilde{\phi}\|_{L^q(\R_+,dt/t)}\le c \|\phi\|_{L^q(\R_+,dt/t)}$,
\end{enumerate}
where $c=\left(2(1+\gamma/\delta)\right)^{1/q}$ if $q<\infty$ and
$c=1$ if $q=\infty$.
\end{lem}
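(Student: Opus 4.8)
\emph{Proof plan.} The plan is to imitate, in continuous form, the two‑step construction behind Lemma~\ref{KP}: first replace $\phi$ by a majorant $\phi_1$ for which $\phi_1(t)t^{\delta}$ is nondecreasing, then replace $\phi_1$ by $\widetilde\phi$ for which $\widetilde\phi(t)t^{-\delta}$ is nonincreasing, all the while keeping exact account of the $L^{q}(\R_+,dt/t)$ quasinorm. A preliminary reduction disposes of the ``or'' in the hypothesis: the substitution $t\mapsto 1/t$ preserves $dt/t$, turns ``$\phi(t)t^{\gamma}$ nondecreasing'' into ``$\psi(t)t^{-\gamma}$ nonincreasing'' for $\psi(t)=\phi(1/t)$, and swaps the two requirements in~(ii); so it suffices to treat $\phi$ with $\phi(t)t^{-\gamma}$ nonincreasing and then take $\widetilde\phi(t)=\widetilde\psi(1/t)$. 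In this case the hypothesis gives the pointwise inequality $\phi(s)\ge\phi(t)(s/t)^{\gamma}$ for $0<s\le t$; for $q<\infty$, raising it to the $q$-th power and integrating in $s$ over $(0,t)$ against $ds/s$ yields $\phi(t)^{q}\le q\gamma\,\|\phi\|_{L^{q}(\R_+,dt/t)}^{q}$, so $\phi$ is bounded (for $q=\infty$ this is the hypothesis), which guarantees convergence of the integrals below.

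For $q<\infty$ I would set
\[
\phi_1(t)^{q}=q(\gamma+\delta)\,t^{-q\delta}\int_0^{t}s^{q\delta-1}\phi(s)^{q}\,ds,\qquad
\widetilde\phi(t)^{q}=2q\delta\,t^{q\delta}\int_{t}^{\infty}s^{-q\delta-1}\phi_1(s)^{q}\,ds .
\]
(Composing the two operations, one checks that $\widetilde\phi(t)^{q}=q(\gamma+\delta)\int_0^{\infty}\min(t/s,s/t)^{q\delta}\phi(s)^{q}\,ds/s$, i.e.\ $\widetilde\phi^{q}$ is, up to the factor $2(1+\gamma/\delta)$, the average of $\phi^{q}$ against a probability density in $ds/s$.) Now $\phi_1(t)^{q}t^{q\delta}$ is an integral over a growing interval, hence nondecreasing, so $\phi_1(t)t^{\delta}$ is nondecreasing; and $\widetilde\phi(t)^{q}t^{-q\delta}$ is an integral over a shrinking interval, so $\widetilde\phi(t)t^{-\delta}$ is nonincreasing. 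For (i): using $\phi(s)^{q}\ge\phi(t)^{q}(s/t)^{q\gamma}$ on $(0,t)$ one computes $q(\gamma+\delta)t^{-q\delta}\int_0^{t}s^{q\delta-1}\phi(t)^{q}(s/t)^{q\gamma}\,ds=\phi(t)^{q}$, hence $\phi_1\ge\phi$; and $\phi_1(t)t^{\delta}$ nondecreasing gives $\phi_1(s)^{q}\ge\phi_1(t)^{q}(t/s)^{q\delta}$ for $s\ge t$, so $2q\delta\,t^{q\delta}\int_{t}^{\infty}s^{-q\delta-1}\phi_1(t)^{q}(t/s)^{q\delta}\,ds=\phi_1(t)^{q}$ gives $\widetilde\phi\ge\phi_1\ge\phi$. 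The remaining half of (ii), that $\widetilde\phi(t)t^{\delta}$ is still nondecreasing, I would verify by differentiating: $\tfrac{d}{dt}\bigl(\widetilde\phi(t)^{q}t^{q\delta}\bigr)$ has the sign of $2q\delta\int_{t}^{\infty}s^{-q\delta-1}\phi_1(s)^{q}\,ds-t^{-q\delta}\phi_1(t)^{q}$, which is $\ge0$ by the estimate just used.

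For (iii) I would interchange the order of integration twice, in each case performing the $t$-integral first via $\int_{s}^{\infty}t^{-q\delta}\,dt/t=s^{-q\delta}/(q\delta)$ and $\int_{0}^{s}t^{q\delta}\,dt/t=s^{q\delta}/(q\delta)$; this gives the identities $\|\phi_1\|_{L^{q}(\R_+,dt/t)}^{q}=(1+\gamma/\delta)\,\|\phi\|_{L^{q}(\R_+,dt/t)}^{q}$ and $\|\widetilde\phi\|_{L^{q}(\R_+,dt/t)}^{q}=2\,\|\phi_1\|_{L^{q}(\R_+,dt/t)}^{q}$, whence $\|\widetilde\phi\|_{L^{q}(\R_+,dt/t)}=\bigl(2(1+\gamma/\delta)\bigr)^{1/q}\|\phi\|_{L^{q}(\R_+,dt/t)}$, the asserted constant. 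Continuous differentiability is then easy: $s\mapsto s^{q\delta-1}\phi(s)^{q}$ is locally integrable, so $\phi_1(t)^{q}$ is $t^{-q\delta}$ times an absolutely continuous function, in particular continuous; hence $\widetilde\phi(t)^{q}$ equals the smooth factor $t^{q\delta}$ times $\int_{t}^{\infty}s^{-q\delta-1}\phi_1(s)^{q}\,ds$, which is $C^{1}$ because its integrand is continuous; and since $\phi_1>0$ on $\R_+$ unless $\phi\equiv0$ (a case in which $\widetilde\phi\equiv0$ does the job), raising to the power $1/q$ leaves $\widetilde\phi$ in $C^{1}(\R_+)$.

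The case $q=\infty$ is handled by the same two steps with essential suprema in place of the $L^{q}$ averages, namely $\phi_1(t)=\esup_{0<s\le t}(s/t)^{\delta}\phi(s)$ and $\widetilde\phi(t)=\esup_{s\ge t}(t/s)^{\delta}\phi_1(s)$: monotonicity in (ii) is then built in, $\phi_1\ge\phi$ and $\widetilde\phi\ge\phi_1$ a.e.\ follow from Lebesgue differentiation, and $\|\phi_1\|_\infty\le\|\phi\|_\infty$, $\|\widetilde\phi\|_\infty\le\|\phi_1\|_\infty$ give $c=1$; these functions are continuous (running suprema of functions that, by the hypothesis, can only jump downward), and the $C^{1}$ conclusion is then obtained by a standard smoothing. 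I expect the only genuine point of care to be the choice of the normalizing constants $q(\gamma+\delta)$ and $2q\delta$: they must be just large enough for the majorizations $\phi_1\ge\phi$ and $\widetilde\phi\ge\phi_1$ to hold, and it is exactly this choice that makes the Tonelli computation of the norm close at $\bigl(2(1+\gamma/\delta)\bigr)^{1/q}$. Apart from that, the hypothesis enters only through the single inequality $\phi(s)\ge\phi(t)(s/t)^{\gamma}$ for $s\le t$, and everything else is routine verification.
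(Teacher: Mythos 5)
Your proof is correct and follows essentially the same two-step majorization as the paper's: you reduce via $t\mapsto 1/t$ to one monotonicity case and then apply two successive weighted averages whose normalizing constants make the Fubini computation close at exactly $\bigl(2(1+\gamma/\delta)\bigr)^{1/q}$; the only differences are cosmetic (the paper reduces to the case $\phi(t)t^{\gamma}$ increasing rather than $\phi(t)t^{-\gamma}$ decreasing, and checks the surviving monotonicity by a change of variables where you differentiate). The one place you overcomplicate things is $q=\infty$: the paper simply takes $\widetilde{\phi}\equiv\|\phi\|_{\infty}$, which is constant, satisfies (i)--(iii) with $c=1$ outright, and needs no concluding ``standard smoothing'' step.
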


\begin{proof}
If $q=\infty$, we define the constant function
$\widetilde{\phi}(t)=\|\phi\|_\infty$, and the lemma follows
immediately. Assume that $q<\infty$. Suppose first that
$\phi(t)t^\gamma$ increases. Set
\begin{equation*}
\phi_1(t)=\left((\delta+\gamma)q\right)^{1/q} t^\delta \left(\int_t^\infty \phi(u)^q u^{-\delta q}\frac{du}{u}\right)^{1/q}.
\end{equation*}
Then $\phi_1(t)t^{-\delta}$ decreases and
\begin{equation*}
\phi_1(t)\ge \left((\delta+\gamma)q\right)^{1/q} t^{\delta+\gamma}\phi(t) \left(\int_t^\infty  u^{-(\delta+\gamma)q}\frac{du}{u}\right)^{1/q}= \phi(t).
\end{equation*}
Furthermore, applying Fubini's theorem, we easily get that
\begin{equation}\label{hardy1}
\|\phi_1\|_{L^q(\R_+,dt/t)}\le\left(1+\frac{\gamma}{\delta}\right)^{1/q}\|\phi\|_{L^q(\R_+,dt/t)}.
\end{equation}
Set now
\begin{equation}\label{varphi}
\widetilde{\phi}(t)=(2\delta q)^{1/q} t^{-\delta}\left(\int_0^t \phi_1(u)^q u^{\delta q}\frac{du}{u}\right)^{1/q}.
\end{equation}
Then $\widetilde{\phi}(t)t^{\delta}$ increases on $\R_+$ and
\begin{equation*}
\widetilde{\phi}(t)\ge \phi_1(t)\ge \phi(t),\quad t\in\R_+.
\end{equation*}
Furthermore, the change of variable $v=u^{2\delta q}$ in the
right-hand side of (\ref{varphi}) gives that
\begin{equation*}
t^{-\delta}\widetilde{\phi}(t)=\left( t^{-2\delta q}\int_0^{t^{2\delta q}} \eta(v^{1/(2\delta q)})
dv\right)^{1/q},
\end{equation*}
where $\eta(u)=(\phi_1(u)u^{-\delta})^q$ is a decreasing function on
$\R_+$. Thus, $t^{-\delta}\widetilde{\phi}(t)$ decreases. Finally,
using Fubini's theorem and $(\ref{hardy1})$, we get (iii).

Let us consider the case when $\phi(t)t^{-\gamma}$ decreases on
$\R_+$. Setting $h(t)=\phi(1/t),$ we have that $h(t)t^\gamma$
increases. As above, we obtain that there exists $\widetilde{h}$
satisfying (i), (ii) and (iii) with respect to $h$. We set
$\widetilde{\phi}(t)=\widetilde{h}(1/t)$. It is easy to see that
$\widetilde{\phi}$ satisfies (i), (ii) and (iii) respect to $\phi$.
The lemma is proved.
\end{proof}

\begin{lem}\label{zetadet}
Let $\alpha, \beta>0$. Let $\varphi$, $\psi:
(0,\infty)\longrightarrow(0,\infty)$ be differentiable functions.
Assume that either $\varphi(t)t^{\alpha}$  decreases or
$\varphi(t)t^{-\alpha}$ increases. Further, assume   that $\psi$ is
bijective, $\psi'(t)>0$ for all $t>0$, and $\psi(t)t^{-\beta}$
decreases. Then the function $z(t)=\psi^{-1}(\varphi(t))$ is
monotone and bijective on $\R_+$ and satisfies inequality
\begin{equation*}
\frac{\alpha}{\beta t}\le \frac{|z'(t)|}{z(t)}\qquad \text{ for any }t>0.
\end{equation*}
\end{lem}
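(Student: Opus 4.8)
The plan is to verify the two claimed properties of $z(t)=\psi^{-1}(\varphi(t))$ directly. First I would check that $z$ is well-defined, monotone and bijective: since $\psi$ is bijective from $(0,\infty)$ to $(0,\infty)$ with $\psi'>0$, it is a strictly increasing diffeomorphism of $\R_+$, and so is $\psi^{-1}$. Under the hypothesis that $\varphi(t)t^{-\alpha}$ increases (the case $\varphi(t)t^{\alpha}$ decreasing being entirely analogous, or reducible to it by replacing $t$ with $1/t$), $\varphi$ is itself strictly increasing and, being differentiable and positive, maps $\R_+$ onto $\R_+$ (one should note that $\varphi(t)\ge t^\alpha \varphi(1)$ forces $\varphi\to\infty$, and $\varphi(t)\le t^\alpha\varphi(1)$ for $t\le 1$ — wait, rather one uses that $\varphi(t)t^{-\alpha}$ is increasing so $\varphi(t)\le \varphi(1)$ for $t\le1$ is false; instead $\varphi(t)/t^\alpha \le \varphi(1)$ gives $\varphi(t)\to 0$ as $t\to 0^+$). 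Composing the increasing bijection $\varphi$ with the increasing bijection $\psi^{-1}$ yields that $z$ is an increasing bijection of $\R_+$; in particular $z'(t)>0$ where it exists, and $z$ is differentiable as a composition of differentiable functions with $\psi'\ne 0$.

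The heart of the matter is the differential inequality. Differentiating $\psi(z(t))=\varphi(t)$ gives $\psi'(z(t))\,z'(t)=\varphi'(t)$, so
\begin{equation*}
\frac{|z'(t)|}{z(t)}=\frac{|\varphi'(t)|}{z(t)\,\psi'(z(t))}.
\end{equation*}
Now I use the two monotonicity hypotheses in logarithmic-derivative form. From $\varphi(t)t^{-\alpha}$ increasing we get $\frac{d}{dt}\log(\varphi(t)t^{-\alpha})\ge 0$, i.e. $\frac{\varphi'(t)}{\varphi(t)}\ge \frac{\alpha}{t}$, hence $|\varphi'(t)|=\varphi'(t)\ge \frac{\alpha}{t}\varphi(t)$. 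From $\psi(s)s^{-\beta}$ decreasing we get $\frac{\psi'(s)}{\psi(s)}\le \frac{\beta}{s}$, i.e. $s\,\psi'(s)\le \beta\,\psi(s)$; applying this at $s=z(t)$ gives $z(t)\,\psi'(z(t))\le \beta\,\psi(z(t))=\beta\,\varphi(t)$. Combining,
\begin{equation*}
\frac{|z'(t)|}{z(t)}=\frac{|\varphi'(t)|}{z(t)\psi'(z(t))}\ge \frac{(\alpha/t)\varphi(t)}{\beta\,\varphi(t)}=\frac{\alpha}{\beta t},
\end{equation*}
which is the asserted bound.

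The only real obstacle is making the surjectivity/monotonicity claims rigorous under the bare hypothesis "differentiable," since monotonicity of $\varphi(t)t^{\pm\alpha}$ forces the correct limits at $0$ and $\infty$ only through the positivity of $\varphi$; this is a routine but slightly fussy argument with the inequalities $\varphi(s)s^{-\alpha}\le \varphi(t)t^{-\alpha}$ for $s\le t$. In the case $\varphi(t)t^{\alpha}$ decreasing one instead substitutes $h(t)=\varphi(1/t)$, observes $h(t)t^{-\alpha}$ increases, applies the argument just given to obtain $z_h(t)=\psi^{-1}(h(t))$ with $|z_h'(t)|/z_h(t)\ge \alpha/(\beta t)$, and then notes $z(t)=z_h(1/t)$, for which a direct computation of the logarithmic derivative preserves the same inequality. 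Everything else is the elementary chain-rule manipulation above, so the write-up is short.
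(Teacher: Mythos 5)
Your proof is correct and follows essentially the same route as the paper: a logarithmic-derivative bound for $\varphi$ from its monotonicity hypothesis, a corresponding bound for $\psi$ from the decrease of $\psi(s)s^{-\beta}$, and the chain rule applied to $\psi(z(t))=\varphi(t)$. The only cosmetic difference is that the paper transfers the hypothesis to $\psi^{-1}$ (noting $\psi^{-1}(u)u^{-1/\beta}$ increases) and differentiates $z=\psi^{-1}\circ\varphi$ directly, whereas you keep $\psi'$ via implicit differentiation; the two computations are identical.
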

\begin{proof}
If $\varphi(t)t^{\alpha}$ decreases, then $\varphi$ is bijective and
strictly decreasing. The derivative of $\varphi(t)t^{\alpha}$ is
smaller or equal than zero and thus
\begin{equation*}
\frac{\alpha}{t}\le -\frac{\varphi'(t)}{\varphi(t)}.
\end{equation*}
If $\varphi(t)t^{-\alpha}$ increases, then $\varphi$ is bijective
and strictly increasing. In this case, taking the derivative of
$\varphi(t)t^{-\alpha}$ we have
\begin{equation*}
\frac{\alpha}{t}\le \frac{\varphi'(t)}{\varphi(t)}.
\end{equation*}
In any case
\begin{equation}\label{zeta}
\frac{\alpha}{t}\le \frac{|\varphi'(t)|}{\varphi(t)}.
\end{equation}
Now we consider the function $\psi$. Since $\psi(t)t^{-\beta}$
decreases, then $\psi^{-1}(t)t^{-1/\beta}$ increases. Proceeding as
before, we obtain
\begin{equation}\label{zeta2}
\frac{1}{\beta t}\le \frac{(\psi^{-1})'(t)}{\psi^{-1}(t)}.
\end{equation}
Finally, using (\ref{zeta2}) and (\ref{zeta}), we have
\begin{equation*}
\frac{|z'(t)|}{z(t)}=\frac{(\psi^{-1})'(\varphi(t))\,|\varphi'(t)|}{\psi^{-1}(\varphi(t))}\ge\frac{|\varphi'(t)|}{\beta \varphi(t)}\ge \frac{\alpha}{\beta t}.
\end{equation*}
\end{proof}

\begin{lem}\label{equilibrio} Assume that $0< p_1,p_2,q_1,q_2\le \infty$, $p_1\neq p_2$. Let $\rho >0$ and $\s<0$. We
set
\begin{equation}\label{notacion}
\theta=\frac{\rho}{\rho-\s},\qquad\frac{1}{p}=\frac{1-\theta}{p_1}+\frac{\theta}{p_2},\qquad \frac{1}{q}=\frac{1-\theta}{q_1}+\frac{\theta}{q_2}.
\end{equation}
Let $\phi_1\in L^{q_1}(\R_+,dt/t)$ and $\phi_2\in
L^{q_2}(\R_+,dt/t)$ be  non-negative functions. We consider three
following cases, defining a function $\Phi(z,t)$ for $z,t>0$ in each
of them:

(i) let $t^{\rho}\phi_1(t)$ increase and $t^{\s}\phi_2(t)$ decrease
on $\R_+$, and set
$$
\Phi(z,t)= t^{-1/p_1}z^{\rho}\phi_1(z)+t^{-1/p_2}z^{\s}\phi_2(z);
$$

(ii) let $t^{-1/p_1}\phi_1(t)$  and $t^{\s}\phi_2(t)$ decrease on
$\R_+$, and set
$$
\Phi(z,t)= t^{-1/p_1}\phi_1(t)z^{\rho}+t^{-1/p_2}z^{\s}\phi_2(z);
$$

(iii) let $t^{\rho}\phi_1(t)$ increase
 and $t^{-1/p_2}\phi_2(t)$ decrease on $\R_+$,
and set
$$
\Phi(z,t)=t^{-1/p_1}z^{\rho}\phi_1(z)+t^{-1/p_2}\phi_2(t)z^{\s}.
$$
Let $f(t)=\inf_{z>0}\Phi(z,t)\,\, (t>0).$ Then in each of the cases
(i)-(iii)
\begin{equation}\label{lem_equilibrio}
\|f\|_{p,q}\le c\|\phi_1\|_{L^{q_1}(\R_+,dt/t)}^{1-\theta}\|\phi_2\|_{L^{q_2}(\R_+,dt/t)}^{\theta},
\end{equation}
where $c$ is a constant that does not depend on $\phi_1$, $\phi_2$.
\end{lem}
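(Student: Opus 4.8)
The strategy is to insert, for each $t>0$, a cleverly chosen competitor $z=z(t)$ into the infimum defining $f(t)$, arranged so that the two summands of $\Phi(z(t),t)$ become equal; the resulting single expression collapses, via the identities $\frac{1-\theta}{p_1}+\frac{\theta}{p_2}=\frac1p$ and $(1-\theta)\rho+\theta\s=\rho-\theta(\rho-\s)=0$, to $2\,t^{-1/p}$ times a geometric mean of values of $\phi_1$ and $\phi_2$, after which Hölder's inequality and a change of variable $t\mapsto z(t)$ give (\ref{lem_equilibrio}). Some preliminaries. If $\phi_1\equiv0$ or $\phi_2\equiv0$, then $f\equiv0$ and there is nothing to prove, so assume both are nonzero. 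In each of the three cases the stated monotonicity hypotheses make $t\mapsto\Phi(z,t)$ nonincreasing on $\R_+$ for every fixed $z$; hence $f$ is nonincreasing and
$$
\|f\|_{p,q}=\Big(\int_0^\infty \big(t^{1/p}f(t)\big)^q\,\frac{dt}{t}\Big)^{1/q}
$$
(usual modification if $q=\infty$). Those same hypotheses place $\phi_1,\phi_2$ in the scope of Lemma~\ref{masmenosdelta}: fixing $\delta\in\big(0,\min(\rho,|\s|,|d|)\big)$ with $d=1/p_1-1/p_2\neq0$, let $\widetilde\phi_1,\widetilde\phi_2$ be the corresponding $C^1$ majorants, so that $\widetilde\phi_i>0$ on $\R_+$, $|\widetilde\phi_i'(t)|\le\delta\,\widetilde\phi_i(t)/t$, and $\|\widetilde\phi_i\|_{L^{q_i}(\R_+,dt/t)}\le c\,\|\phi_i\|_{L^{q_i}(\R_+,dt/t)}$. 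Denoting by $\widetilde\Phi$ the function $\Phi$ with $\widetilde\phi_i$ in place of $\phi_i$, we have $f(t)\le\widetilde\Phi(z,t)$ for all $z,t>0$.

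Now I build the balance point, illustrating with case (i). Put $a(z)=z^{\rho}\widetilde\phi_1(z)$ and $b(z)=z^{\s}\widetilde\phi_2(z)$. The bound $|\widetilde\phi_i'|\le\delta\widetilde\phi_i/t$ with $\delta<\min(\rho,|\s|)$ forces $a$ to be a $C^1$ strictly increasing bijection of $\R_+$ and $b$ a $C^1$ strictly decreasing bijection of $\R_+$; consequently $\psi:=a/b$ is a $C^1$ strictly increasing bijection of $\R_+$ and $\psi(z)z^{-(\rho-\s+2\delta)}$ decreases. Define $z(t)$ by $a(z(t))=t^{d}b(z(t))$, that is, $z(t)=\psi^{-1}(\varphi(t))$ with $\varphi(t)=t^{d}$; since for any $\alpha\in(0,|d|)$ the function $\varphi(t)t^{-\alpha}$ (when $d>0$) or $\varphi(t)t^{\alpha}$ (when $d<0$) is strictly monotone, Lemma~\ref{zetadet} applies with $\beta=\rho-\s+2\delta$ and shows that $z$ is a monotone bijection of $\R_+$ satisfying $t\,|z'(t)|/z(t)\ge\alpha/\beta$, hence $dt/t\le(\beta/\alpha)\,dz/z$ under the substitution $z=z(t)$. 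By the defining relation the two terms of $\widetilde\Phi(z(t),t)$ coincide, so
$$
f(t)\le\widetilde\Phi(z(t),t)=2\big[t^{-1/p_1}a(z(t))\big]^{1-\theta}\big[t^{-1/p_2}b(z(t))\big]^{\theta}=2\,t^{-1/p}\,\widetilde\phi_1(z(t))^{1-\theta}\widetilde\phi_2(z(t))^{\theta}.
$$
Cases (ii) and (iii) proceed identically: the balancing relation reads $\widetilde\phi_2(z)z^{\s-\rho}=t^{-d}\widetilde\phi_1(t)$, respectively $\widetilde\phi_1(z)z^{\rho-\s}=t^{d}\widetilde\phi_2(t)$, whose $z$-side is (after passing to its reciprocal if needed, to get a positive derivative) a $C^1$ monotone bijection $\psi$ of $\R_+$ with a suitable power of $z$ times $\psi$ monotone, while its $t$-side satisfies the condition on $\varphi$ in Lemma~\ref{zetadet} once $\delta<|d|$ (here one treats $d>0$ and $d<0$ separately). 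One then obtains $t^{1/p}f(t)\le2\,\widetilde\phi_1(t)^{1-\theta}\widetilde\phi_2(z(t))^{\theta}$ in case (ii) and $t^{1/p}f(t)\le2\,\widetilde\phi_1(z(t))^{1-\theta}\widetilde\phi_2(t)^{\theta}$ in case (iii), with $z$ once more a $C^1$ diffeomorphism of $\R_+$ obeying $dt/t\le c\,dz/z$.

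It remains to integrate. In case (i), raising the pointwise bound to the power $q$ and integrating in $dt/t$,
$$
\|f\|_{p,q}^{q}\le 2^{q}\int_0^\infty\widetilde\phi_1(z(t))^{(1-\theta)q}\widetilde\phi_2(z(t))^{\theta q}\,\frac{dt}{t}\le 2^{q}\frac{\beta}{\alpha}\int_0^\infty\widetilde\phi_1(z)^{(1-\theta)q}\widetilde\phi_2(z)^{\theta q}\,\frac{dz}{z};
$$
Hölder's inequality with the exponents $q_1/((1-\theta)q)$ and $q_2/(\theta q)$ — conjugate precisely because $\frac{1-\theta}{q_1}+\frac{\theta}{q_2}=\frac1q$ — bounds the last integral by $\|\widetilde\phi_1\|_{L^{q_1}(\R_+,dz/z)}^{(1-\theta)q}\|\widetilde\phi_2\|_{L^{q_2}(\R_+,dz/z)}^{\theta q}$, and Lemma~\ref{masmenosdelta}(iii) then replaces the $\widetilde\phi_i$ by $\phi_i$, yielding (\ref{lem_equilibrio}). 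In cases (ii) and (iii) one applies Hölder first to $\int_0^\infty\widetilde\phi_1(t)^{(1-\theta)q}\widetilde\phi_2(z(t))^{\theta q}\,dt/t$ (or its mirror image) and then changes variable only in the factor containing $z(t)$; the conventions $p_i=\infty$ and $q_i=\infty$ (with $q=\infty$ only when $q_1=q_2=\infty$) call for nothing beyond the standard trivial adjustments. The one place I expect real work is the construction of the balance map $z(t)$ — establishing that it is a genuine $C^1$ diffeomorphism of $\R_+$ with two-sided logarithmic Jacobian control — and this is exactly where choosing $\delta$ strictly below $\rho$, $|\s|$ and $|d|$ is indispensable; everything else is bookkeeping with the exponent identities.
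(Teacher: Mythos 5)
Your proposal is correct and follows essentially the same route as the paper's proof: regularize $\phi_1,\phi_2$ via Lemma \ref{masmenosdelta}, construct the balance map $z(t)$ from Lemma \ref{zetadet} so that the two addends of $\Phi$ coincide, collapse to $2t^{-1/p}$ times a geometric mean using the exponent identities, and finish with H\"older and the logarithmic Jacobian bound $dt/t\le c\,dz/z$. The only deviations (a single $\delta<\min(\rho,|\s|,|d|)$ instead of the paper's separate choices $\rho/2$, $|\s|/2$, $|d|/2$, and performing the change of variable before rather than after H\"older in case (i)) are cosmetic.
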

\begin{proof} We first consider  the case (i).
We apply Lemma \ref{masmenosdelta} to $\phi_1$  (with $\g_1=\rho, \,
\d_1=\rho/2),$ and to $\phi_2$ (with $\g_2=|\s|,\, \d_2=|\s|/2).$
 We obtain strictly positive and differentiable
functions $\widetilde{\phi}_1\ge \phi_1$ and $\widetilde{\phi}_2\ge
\phi_2$ such that
\begin{equation}\label{crecedecr1}
\widetilde{\phi}_i(t)t^{\d_i}\quad\text{increase and}\quad \widetilde{\phi}_i(t)t^{-\d_i}\quad\mbox{decrease}
\end{equation}
for $i=1,2.$ Besides
\begin{equation}\label{normaphi}
\|\widetilde{\phi}_i\|_{L^{q_i}(\R_+,dt/t)}\le c\|\phi_i\|_{L^{q_i}(\R_+,dt/t)} \quad (i=1,2).
\end{equation}

Then, for any $t>0$, we have the inequality
\begin{equation}\label{est_suma}
f(t)\le \inf_{z>0}\left[ t^{-1/p_1}z^{\rho}\widetilde{\phi}_1(z)+t^{-1/p_2}
z^{\s}\widetilde{\phi}_2(z)\right].
\end{equation}
Fix $t>0$ and set
$\psi(z)=z^{\rho-\s}\widetilde{\phi_1}(z)/\widetilde{\phi_2}(z).$ By
(\ref{crecedecr1}), the function $\psi(z)z^{-(\rho-\s)/2}$
increases. Thus, $\psi(z)$ is a bijective strictly increasing
function with strictly positive derivative. Furthermore,
(\ref{crecedecr1})  imply also that $\psi(z)z^{-3(\rho-\s)/2}$
decreases. Denote $d=1/p_1-1/p_2.$ We apply Lemma \ref{zetadet} with
$\varphi(t)=t^d, \,\, \a=|d|,$
 and $\beta=3(\rho-\s)/2$. Then,
$z(t)=\psi^{-1}(t^d)$ is a bijective and differentiable function
from $(0,\infty)$ onto $(0,\infty)$ such that
\begin{equation}\label{zetaprimadet}
\frac{1}{t}\le c\frac{|z'(t)|}{z(t)}.
\end{equation}
Choosing $z\equiv z(t)$ in (\ref{est_suma}), the two addends are
equal. In consequence,
\begin{equation*}
f(t)\le 2\left(t^{-1/p_1}z(t)^{\rho}\widetilde{\phi}_1(z(t))\right)^{1-\theta}
\left(t^{-1/p_2}
z(t)^{\s}\widetilde{\phi}_2(z(t))\right)^\theta
\end{equation*}
\begin{equation*}
=2t^{-1/p} \left(\widetilde{\phi}_1(z(t))\right)^{1-\theta}
\left(\widetilde{\phi}_2(z(t))\right)^\theta.
\end{equation*}
Now we apply H\"older's inequality with conjugate exponents
$q_1/(q(1-\theta))$ and $q_2/(q\theta)$.  Using also
  (\ref{zetaprimadet}), we obtain
\begin{equation*}
\|f\|_{p,q}\le 2\left(\int_0^\infty\left(\widetilde{\phi}_1(z(t))\right)^{q(1-\theta)}
\left(\widetilde{\phi}_2(z(t))\right)^{q\theta}\frac{dt}{t}\right)^{1/q}
\end{equation*}
\begin{equation*}
\le 2\left(\int_0^\infty\widetilde{\phi}_1(z(t))^{q_1}\frac{dt}{t}\right)^{\frac{1-\theta}{q_1}}
\left(\int_0^\infty\widetilde{\phi}_2(z(t))^{q_2}\frac{dt}{t}\right)^{\frac{\theta}{q_2}}
\end{equation*}
\begin{equation*}
\le c
\|\widetilde{\phi}_1\|_{L^{q_1}(\R_+,dt/t)}
^{1-\theta}
\|\widetilde{\phi}_2\|_{L^{q_2}(\R_+,dt/t)}^\theta.
\end{equation*}
This inequality and (\ref{normaphi}) imply (\ref{lem_equilibrio}) in
the case (i).

Next, we consider the case (ii). We apply Lemma \ref{masmenosdelta}
to $\phi_2$ as above and to $\phi_1$ with $\g_1=1/p_1$ and
$\d_1=|d|/2$ ($d=1/p_1-1/p_2)$. We obtain strictly positive and
differentiable functions $\widetilde{\phi}_1\ge \phi_1$ and
$\widetilde{\phi}_2\ge \phi_2$ satisfying (\ref{crecedecr1}) and
(\ref{normaphi}). Then, we set
$$
\varphi(t)=\frac{t^d}{\widetilde{\phi}_1(t)}\quad\mbox{and}\quad
\psi(z)=\frac{z^{\rho-\s}}{\widetilde{\phi_2}(z)}.
$$
By (\ref{crecedecr1}),  $\varphi(t)t^{|d|/2}$ decreases if $d<0$ and
$\varphi(t)t^{-|d|/2}$ increases if $d>0.$ As above,
$\psi(z)z^{-(\rho-\s)/2}$ increases and $\psi(z)z^{-3(\rho-\s)/2}$
decreases. Thus, we can apply Lemma \ref{zetadet} with $\a=|d|/2$
 and $\beta=3(\rho-\s)/2$. Then,
$z(t)=\psi^{-1}(\varphi(t))$ is a bijective and differentiable
function from $(0,\infty)$ onto $(0,\infty)$ satisfying
(\ref{zetaprimadet}). We have that for any $t>0$ and any $z>0$
$$
f(t)\le t^{-1/p_1}\widetilde{\phi}_1(t)z^{\rho}+t^{-1/p_2}z^{\s}\widetilde{\phi}_2(z).
$$
Choosing $z=z(t),$ we get
$$
f(t)\le 2t^{-1/p} \left(\widetilde{\phi}_1(t)\right)^{1-\theta}
\left(\widetilde{\phi}_2(z(t))\right)^\theta.
$$
Proceeding as above, we obtain inequality (\ref{lem_equilibrio}) in
the case (ii).

Finally, the case (iii) is treated by similar arguments. Moreover,
it can also be derived from the case (ii) by exchanging $p_1$ for
$p_2$, $q_1$ for $q_2$, $\rho$ for $|\sigma|$ and $z$ for $1/z$.
\end{proof}

\begin{lem}\label{lem_termico}
Let $f\in S_0(\mathbb{R}^n)$. Assume also that $f\in
L^1(\mathbb{R}^n)+L^\infty(\mathbb{R}^n)$. Let $m\in\mathbb{N}$.
Then, for almost all $x\in\mathbb{R}^n$,
\begin{equation}\label{eq_termica}
f(x)=\frac{(-1)^m}{(m-1)!}\int_0^\infty h^{m-1}\frac{\partial^m}{\partial h^m}P_hf(x) dh.
\end{equation}
\end{lem}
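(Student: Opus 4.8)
The plan is to view the right-hand side of (\ref{eq_termica}) as an improper integral and to evaluate it by iterated integration by parts in $h$; after that, everything reduces to the elementary limits $P_hf(x)\to f(x)$ as $h\to0^+$, $P_hf(x)\to 0$ as $h\to\infty$, and to the vanishing of all intermediate boundary terms $h^\ell\,\partial_h^\ell P_hf(x)$, $1\le\ell\le m-1$, at both ends of $(0,\infty)$.

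First I would collect the structural facts. Since $f\in L^1(\R^n)+L^\infty(\R^n)\subset\mathcal{S}'(\R^n)$, the function $P_hf=p_h*f$ is smooth, is a tempered distribution, and $\partial_h^k P_hf(x)=\int_{\R^n}(\partial_h^k p_h)(y)\,f(x-y)\,dy$ for every integer $k\ge0$ and a.e.\ $x$, differentiation under the integral being justified by dominated convergence (on a compact subinterval of $(0,\infty)$ the functions $\partial_h^k p_h$ have a common majorant in $L^1\cap L^\infty$, which is integrable against $f$ once $f$ is split as $f_1+f_\infty$, $f_1\in L^1$, $f_\infty\in L^\infty$). From the scaling $p_h(y)=h^{-n/2}P_1(y/\sqrt h)$, $P_1(z)=(4\pi)^{-n/2}e^{-|z|^2/4}$, repeated $h$-differentiation gives
\[
\partial_h^k p_h(y)=h^{-n/2-k}\,Q_k(y/\sqrt h),\qquad Q_k\in\mathcal{S}(\R^n),
\]
and, since $\int_{\R^n}Q_k=h^k\,\partial_h^k\!\int_{\R^n}p_h=h^k\,\partial_h^k 1$, we have $\int Q_0=1$ and $\int Q_k=0$ for $k\ge1$. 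Writing $\eta_\delta(y)=\delta^{-n}\eta(y/\delta)$ and substituting $y=\sqrt h\,z$ yields the convenient identity
\[
h^k\,\partial_h^k P_hf(x)=\int_{\R^n}Q_k(z)\,f(x-\sqrt h\,z)\,dz=\big((Q_k)_{\sqrt h}*f\big)(x),\qquad k\ge0 .
\]

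Next I would establish the endpoint behaviour. For $h\to0^+$: since $Q_k$ is Schwartz, $|Q_k|\le\psi$ with $\psi(z)=C(1+|z|)^{-n-1}$ a radially non-increasing integrable function, so, using $\int Q_0=1$ and $\int Q_k=0$ ($k\ge1$),
\[
\Big|h^k\,\partial_h^k P_hf(x)-f(x)\!\int_{\R^n}\!Q_k\Big|=\Big|\int_{\R^n}Q_k(z)\,[f(x-\sqrt h\,z)-f(x)]\,dz\Big|\le\int_{\R^n}\psi_{\sqrt h}(y)\,|f(x-y)-f(x)|\,dy,
\]
and, because $\psi_{\sqrt h}*|f|$ is finite everywhere (again split $f=f_1+f_\infty$), the right-hand side tends to $0$ as $h\to0^+$ at every Lebesgue point of $f$, by the standard convergence property of approximate identities with a radially decreasing integrable majorant. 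Hence, for a.e.\ $x$, $P_hf(x)\to f(x)$ and $h^\ell\,\partial_h^\ell P_hf(x)\to0$ $(\ell\ge1)$ as $h\to0^+$. For $h\to\infty$: by the Hardy--Littlewood inequality and $(|f(x-\cdot)|)^*=f^*$ we have $|P_hf(x)|\le\int_0^\infty p_h^*(s)f^*(s)\,ds$; since $f\in S_0(\R^n)$ gives $f^*(s)\to0$ as $s\to\infty$ while $f\in L^1+L^\infty$ gives $\int_0^R f^*<\infty$ for each $R$, splitting the integral at $R$ and using $\|p_h^*\|_\infty=(4\pi h)^{-n/2}$, $\|p_h^*\|_1=1$ shows $\|P_hf\|_\infty\to0$. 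Finally, from $p_h=p_{h/2}*p_{h/2}$ and $\partial_h^k p_h=\Delta^k p_h$ we get $\partial_h^k P_hf=(\Delta^k p_{h/2})*P_{h/2}f$ with $\|\Delta^k p_{h/2}\|_1=(h/2)^{-k}\|Q_k\|_1$, hence $h^k\|\partial_h^k P_hf\|_\infty\le 2^k\|Q_k\|_1\,\|P_{h/2}f\|_\infty\to0$ as $h\to\infty$; in particular $P_hf(x)\to0$ and $h^\ell\,\partial_h^\ell P_hf(x)\to0$ $(\ell\ge1)$ as $h\to\infty$, uniformly in $x$.

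It then remains to integrate by parts. Fix $x$ for which the $h\to0^+$ limits above hold (a.e.\ $x$) and set $\phi(h)=P_hf(x)\in C^\infty(0,\infty)$. Performing $m-1$ integrations by parts gives, for $0<\e<R<\infty$,
\[
\int_\e^R h^{m-1}\phi^{(m)}(h)\,dh=\sum_{\ell=1}^{m-1}(-1)^{\,m-1-\ell}\,\frac{(m-1)!}{\ell!}\,\Big[h^\ell\phi^{(\ell)}(h)\Big]_\e^R+(-1)^{\,m-1}(m-1)!\,\big[\phi(h)\big]_\e^R .
\]
Letting $R\to\infty$ (all bracketed terms at $R$ vanish, including $\phi(R)$) and then $\e\to0^+$ (the brackets with $\ell\ge1$ vanish at $0$, while $\phi(\e)\to f(x)$) shows that the improper integral converges and equals $(-1)^{\,m-1}(m-1)!\,(0-f(x))=(-1)^m(m-1)!\,f(x)$, which is exactly (\ref{eq_termica}). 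I expect the only genuine obstacle to be the vanishing of the small-$h$ boundary terms $h^\ell\,\partial_h^\ell P_hf(x)$: the crude estimate $h^\ell\|\partial_h^\ell P_hf\|_\infty\le c\,\|P_{h/2}f\|_\infty$ is merely $O(1)$ as $h\to0^+$, so one must instead use the cancellation $\int Q_\ell=0$ together with the Lebesgue-point structure of $f$ — and this is precisely where the hypothesis $f\in L^1+L^\infty$ enters.
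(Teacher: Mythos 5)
Your argument is correct, and it reaches the paper's identity by a route that is mechanically different but structurally equivalent. The paper computes $\partial_h^m p_h$ explicitly through generalized Laguerre polynomials, uses a Laguerre derivative identity to exhibit a closed-form primitive $F_m(h,x)=-\int p_h(y)L_{m-1}^{n/2}(|y|^2/(4h))f(x-y)\,dy$ of the integrand, and then evaluates $\lim_{h\to0^+}F_m=-f(x)$ and $\lim_{h\to\infty}F_m=0$ in one stroke, using orthogonality of the Laguerre polynomials to verify that this single kernel is an approximate identity of total mass one. You instead integrate by parts $m-1$ times and dispose of the $m$ boundary terms $h^{\ell}\partial_h^{\ell}P_hf(x)$ one at a time, via the scaling form $\partial_h^{\ell}p_h(y)=h^{-n/2-\ell}Q_{\ell}(y/\sqrt h)$ and the vanishing moments $\int Q_{\ell}=0$ ($\ell\ge1$), $\int Q_0=1$ at $h\to0^+$, and via the semigroup factorization together with $\|P_hf\|_\infty\to0$ at $h\to\infty$. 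In fact your implicit antiderivative $-\sum_{\ell=0}^{m-1}\frac{(-1)^{\ell}}{\ell!}h^{\ell}\partial_h^{\ell}P_hf(x)$ coincides with the paper's $F_m(h,x)$, because $L_{m-1}^{n/2}=\sum_{i=0}^{m-1}L_i^{n/2-1}$ and $p_h(y)L_i^{n/2-1}(|y|^2/(4h))=\frac{(-1)^ih^i}{i!}\partial_h^ip_h(y)$; the only real difference is whether the endpoint limits of this primitive are taken term by term (your way, which needs the cancellation $\int Q_{\ell}=0$ for each $\ell\ge1$) or for the combined kernel at once (the paper's way, which needs the Laguerre identities but only one approximate-identity limit). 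Both proofs rest on the same underlying ingredients: differentiation under the integral sign justified by uniform Schwartz majorants on compact $h$-intervals, the Stein--Weiss approximate-identity theorem at Lebesgue points, and the splitting of $f$ into an $L^1$ part and an $L^\infty$ part. Your version buys independence from special-function formulas; the paper's buys a single boundary evaluation instead of $m$ of them. You correctly identified that the delicate point is the small-$h$ behaviour of the intermediate boundary terms, where the crude bound $h^{\ell}\|\partial_h^{\ell}P_hf\|_\infty\le c\|P_{h/2}f\|_\infty$ is only $O(1)$ and the moment cancellation is genuinely needed.
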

\begin{proof}
First, let us compute the derivatives of the heat kernel
$p_h(y)=(4\pi h)^{-n/2}e^{-|y|^2/(4h)}$. It can be seen that
\begin{equation}\label{laguerre}
\frac{\partial^m}{\partial h^m}p_h(y)=p_h(y)\frac{m!(-1)^m}{h^m}L_m^{n/2-1}\left(\frac{|y|^2}{4h}\right),
\end{equation}
where $L_m^{n/2-1}$ is the generalized Laguerre polynomial. To prove
(\ref{laguerre}), for instance, use \cite[p.190(26)]{EH} if $y\neq
0$ and \cite[p.189(13)]{EH} if $y=0$. Besides, by
\cite[p.190(28)]{EH}, it follows that
\begin{equation}\label{derivada_integrando}
\frac{\partial}{\partial h}\left(p_h(y)L_{m-1}^{n/2}\left(\frac{|y|^2}{4h}\right)\right)=-m\frac{p_h(y)}{h}L_m^{n/2-1}\left(\frac{|y|^2}{4h}\right).
\end{equation}

Now we will show that for any $h>0$
\begin{equation}\label{passderivative}
\frac{\partial^m}{\partial h^m}P_hf(x)=\int_{\R^n}\frac{\partial^m}{\partial h^m}p_h(y)f(x-y)dy.
\end{equation}
First we can assume that $1/M<h<M$. Then, by substituting in
$(\ref{laguerre})$ $h$ for $M$ or $1/M$ when convenient, we can
bound $|\frac{\partial^m}{\partial h^m}p_h(y)|$ by a function
independent of $h$ which is in the Schwartz class. And the same can
be done with the derivatives of order $i$, $i=0,1,\ldots,m$. Thus,
we can pass the derivative through the integral sign (see, for
instance, \cite[Corollary 5.9]{Bar}) and (\ref{passderivative}) is
true.

We will compute explicitly the integral in (\ref{eq_termica}).
Define for $m\in\mathbb{N}$,
\begin{equation}\label{efesubeme}
F_m(h,x)=-\int_{\mathbb{R}^n} p_h(y)L_{m-1}^{n/2}\left(\frac{|y|^2}{4h}\right)f(x-y)dy,
\end{equation}
Using the same arguments as before, the partial derivative of $F_m$
with respect to $h$ can be calculated passing through the integral
sign. Thus, (\ref{efesubeme}),  (\ref{derivada_integrando}),
(\ref{laguerre}), and (\ref{passderivative}) lead to
\begin{equation*}
\frac{\partial F_m}{\partial h}(h,x)=\frac{(-1)^m}{(m-1)!} h^{m-1}\frac{\partial^m}{\partial h^m}P_hf(x).
\end{equation*}
In other words, $F_m$ is a primitive for the right hand side of the
last equation. It only remains to prove that for any  $m\in
\mathbb{N}$,
\begin{equation}\label{LIMITS1}
\lim_{h\to +\infty}F_m(h,x)=0 \quad\mbox{for almost all}\quad x\in\mathbb{R}^n
\end{equation}
and
\begin{equation}\label{LIMITS2}
\lim_{h\to 0^+}F_m(h,x)=-f(x) \quad\mbox{for almost all}\quad x\in\mathbb{R}^n,
\end{equation}
 and (\ref{eq_termica}) follows.

Now, since $L_{m-1}^{n/2}=\sum_{i=0}^{m-1}L_i^{n/2-1}$ (cf.
\cite[p.192(38)]{EH}), and Laguerre polynomials $L_{i}^{n/2-1}$ are
orthogonal with respect to the weight $e^{-t}t^{n/2-1}$,
\begin{equation*}
\int_0^\infty e^{-u}L_{m-1}^{n/2}(u)u^{n/2-1}du=\sum_{i=0}^{m-1}\int_0^\infty e^{-u}L_{i}^{n/2-1}(u)u^{n/2-1}du
\end{equation*}
\begin{equation*}
=\int_0^\infty e^{-u}L_{0}^{n/2-1}(u)u^{n/2-1}du=\int_0^\infty e^{-u}u^{n/2-1}du=\Gamma(n/2).
\end{equation*}
Furthermore, changing to spherical coordinates, applying the change
of variable $t^2=u$, and the last equality, we obtain
\begin{equation*}
\frac{1}{\pi^{n/2}}\int_{\mathbb{R}^n}e^{-|z|^2}L_{m-1}^{n/2}(|z|^2)dz=\frac{|\mathbb{S}^{n-1}|}{\pi^{n/2}}\int_0^\infty e^{-t^2}L_{m-1}^{n/2}(t^2)t^{n-1}dt
\end{equation*}
\begin{equation*}
=\frac{|\mathbb{S}^{n-1}|}{2\pi^{n/2}}\int_0^\infty e^{-u}L_{m-1}^{n/2}(u)u^{n/2-1}du=\frac{|\mathbb{S}^{n-1}|}{2\pi^{n/2}}\Gamma(n/2)=1.
\end{equation*}
In conclusion, $\pi^{-n/2}e^{-|z|^2}L_{m-1}^{n/2}(|z|^2)$ is an
integrable function with the integral equal to $1$. Then
$g_h(y)=p_h(y)L_{m-1}^{n/2}(|y|^2/(4h))$ can be used to construct by
convolution an approximation of the identity when $h\to 0^+$.
Finally, given any $\varepsilon>0$, as $f\in S_0(\mathbb{R}^n)$, it
holds that $|\{f>\varepsilon\}|<\infty$. Thus, we can split
$f=f_1^\varepsilon+f_2^\varepsilon$, where
$f_1^\varepsilon=f\chi_{\{f>\varepsilon\}}$ and
$f_2^\varepsilon=f\chi_{\{f\le\varepsilon\}}$. Then
$\|f_2^\varepsilon\|_\infty\le \varepsilon$. Furthermore, since
$f\in L^1(\mathbb{R}^n)+L^\infty(\mathbb{R}^n)$ and
$|\{f>\varepsilon\}|<\infty$, we have that $f_1^\varepsilon\in
L^1(\mathbb{R}^n).$ It is clear that the function
$$
\psi(x)=\sup_{|z|\ge|x|} e^{-|z|^2}|L_{m-1}^{n/2}(|z|^2)|\quad (x\in \R^n)
$$
is
integrable on $\R^n$ and therefore
\begin{equation*}
\lim_{h\to 0^+}g_h*f_1^\varepsilon(x)=f_1^\varepsilon(x)\quad\mbox{for almost all}\quad x\in\mathbb{R}^n
\end{equation*}
(see \cite[Ch. 1, Theorem 1.25]{SW}).
 Thus, by (\ref{efesubeme}), for  almost all $x\in\mathbb{R}^n$,
\begin{equation*}
\limsup_{h\to 0^+}|F_m(h,x)+f(x)|= \limsup_{h\to 0^+}|-g_h*f_1^\varepsilon(x)-g_h*f_2^\varepsilon(x)+f_1^\varepsilon(x)+f_2^\varepsilon(x)|
\end{equation*}
\begin{equation*}
\le\lim_{h\to 0^+}|g_h*f_1^\varepsilon(x)-f_1^\varepsilon(x)|+ \limsup_{h\to 0^+}|g_h*f_2^\varepsilon(x)-f_2^\varepsilon(x)|
\end{equation*}
\begin{equation*}
 \le \limsup_{h\to 0^+}\|f_2^\varepsilon\|_\infty (1+\|g_h\|_1)\le (1+\|g_1\|_1)\varepsilon.
\end{equation*}
This implies (\ref{LIMITS2}). On the other hand
\begin{equation*}
\limsup_{h\to +\infty}\| F_m(h,\cdot)\|_\infty \le \limsup_{h\to +\infty}\| g_h*f_1^\varepsilon\|_\infty+\| g_h*f_2^\varepsilon\|_\infty
\end{equation*}
\begin{equation*}
\le\limsup_{h\to +\infty}\| g_h\|_\infty \|f_1^\varepsilon\|_1+\| g_h\|_1\|f_2^\varepsilon\|_\infty= 0+ \|g_1\|_1\|f_2^\varepsilon\|_\infty\le \|g_1\|_1\cdot\varepsilon.
\end{equation*}
This yields (\ref{LIMITS1}). The proof is completed.
\end{proof}

\section{Inequalities with Triebel-Lizorkin and Besov norms}

\begin{teo}\label{teoFF}
Let $0<p_1, p_2<\infty$, $0<q_1,q_2\le \infty$, $r>0$, $s<0$. Let
\begin{equation}\label{new_notation}
\theta=\frac{r}{r-s},\qquad\frac{1}{p}=\frac{1-\theta}{p_1}+\frac{\theta}{p_2},\qquad \frac{1}{q}=\frac{1-\theta}{q_1}+\frac{\theta}{q_2}.
\end{equation}
 Then, for any function $f\in
S_0(\mathbb{R}^n)\cap\left(L^1(\mathbb{R}^n)+L^\infty(\mathbb{R}^n)\right)$
it holds that
\begin{equation}\label{est_teoFF}
\|f\|_{p,q}\le c\|f\|_{\dot{F}^{r}_{p_1,q_1;\infty}}^{1-\theta} \|f\|_{\dot{F}^{s}_{p_2,q_2;\infty}}^{\theta}
\end{equation}
where 
$c$ does not depend on $f$.
\end{teo}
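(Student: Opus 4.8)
The plan is to combine the thermic (Gauss--Weierstrass) representation of $f$ supplied by Lemma~\ref{lem_termico} with the pointwise information carried by the two Triebel--Lizorkin--Lorentz quasinorms on the right of (\ref{est_teoFF}), and then to conclude via H\"older's inequality for Lorentz spaces. Since here the last (thermic) index equals $\infty$, the whole argument can be run pointwise in $x$, and no analogue of Lemma~\ref{equilibrio} is needed; moreover --- in contrast with Theorem~\ref{Sobolev1} --- one does not have to check separately that $f\in L^{p,q}$, because the method produces a pointwise majorant of $|f|$ directly.

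First I would fix an integer $m$ with $2m>r$; then also $2m>r>0>s$, so both quasinorms on the right of (\ref{est_teoFF}) may be computed with this single $m$. Put
$$
\Psi_r(x)=\sup_{h>0}h^{m-r/2}\Bigl|\frac{\partial^m P_hf}{\partial h^m}(x)\Bigr|,\qquad
\Psi_s(x)=\sup_{h>0}h^{m-s/2}\Bigl|\frac{\partial^m P_hf}{\partial h^m}(x)\Bigr|,
$$
so that $\|\Psi_r\|_{p_1,q_1}=\|f\|_{\dot F^{r}_{p_1,q_1;\infty}}$ and $\|\Psi_s\|_{p_2,q_2}=\|f\|_{\dot F^{s}_{p_2,q_2;\infty}}$; I may assume both are finite, the inequality being trivial otherwise. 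Because $f\in S_0(\R^n)\cap\bigl(L^1(\R^n)+L^\infty(\R^n)\bigr)$, Lemma~\ref{lem_termico} applies and gives, for a.e.\ $x$,
$$
|f(x)|\le\frac{1}{(m-1)!}\int_0^\infty h^{m-1}\Bigl|\frac{\partial^m P_hf}{\partial h^m}(x)\Bigr|\,dh .
$$

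The heart of the argument is a splitting at an arbitrary level $z>0$: on $(0,z)$ I bound the integrand by $h^{r/2-m}\Psi_r(x)$, on $(z,\infty)$ by $h^{s/2-m}\Psi_s(x)$. Since $r>0$ the first integral converges at $0$ and since $s<0$ the second converges at $\infty$, and the two elementary integrations give, for a.e.\ $x$ and all $z>0$,
$$
|f(x)|\le\frac{2}{(m-1)!}\Bigl(\frac{z^{r/2}}{r}\,\Psi_r(x)+\frac{z^{s/2}}{|s|}\,\Psi_s(x)\Bigr).
$$
Optimizing in $z$ pointwise --- balance the two terms by taking $z^{(r-s)/2}=r\Psi_s(x)/(|s|\Psi_r(x))$ when both values are nonzero, and note that $\Psi_r(x)=0$ is equivalent to $\Psi_s(x)=0$ and to $\partial_h^m P_hf(x)\equiv0$, whence $f(x)=0$ by Lemma~\ref{lem_termico} --- and recalling $\theta=r/(r-s)$, one arrives at the pointwise estimate
$$
|f(x)|\le c_0\,\Psi_r(x)^{1-\theta}\,\Psi_s(x)^{\theta}\qquad\text{for a.e. }x\in\R^n,
$$
where $c_0$ depends only on $m,r,s$.

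It remains to pass to the Lorentz quasinorm of $f$. Since $0<\theta<1$, I would use the homogeneity $\|g^{\alpha}\|_{a,b}=\|g\|_{\alpha a,\alpha b}^{\alpha}$ together with H\"older's inequality for Lorentz spaces (which rests on the elementary bound $(uv)^{*}(2t)\le u^{*}(t)v^{*}(t)$; cf.\ \cite[Ch.~4]{BSh}, \cite{Graf}), applied to the exponent pairs $\bigl(p_1/(1-\theta),\,q_1/(1-\theta)\bigr)$ and $\bigl(p_2/\theta,\,q_2/\theta\bigr)$, whose reciprocals add up to $(1/p,1/q)$ by (\ref{new_notation}):
$$
\|f\|_{p,q}\le c_0\bigl\|\Psi_r^{1-\theta}\Psi_s^{\theta}\bigr\|_{p,q}
\le c\,\bigl\|\Psi_r^{1-\theta}\bigr\|_{\frac{p_1}{1-\theta},\frac{q_1}{1-\theta}}\,\bigl\|\Psi_s^{\theta}\bigr\|_{\frac{p_2}{\theta},\frac{q_2}{\theta}}
= c\,\|\Psi_r\|_{p_1,q_1}^{1-\theta}\,\|\Psi_s\|_{p_2,q_2}^{\theta},
$$
which is precisely (\ref{est_teoFF}). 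I do not foresee a real obstacle; the only points needing care are the applicability of Lemma~\ref{lem_termico} --- secured by the hypothesis $f\in S_0\cap(L^1+L^\infty)$ --- and the (standard) independence of the thermic Triebel--Lizorkin quasinorm from the admissible parameter $m$, which permits using one $m$ for both factors.
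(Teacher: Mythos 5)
Your proposal is correct and follows essentially the same route as the paper: the thermic representation from Lemma~\ref{lem_termico}, the split of the $h$-integral at a level $z$ against the two maximal functions $H=\Psi_r$ and $G=\Psi_s$, balancing the two terms, and H\"older at the end. The only (harmless) variation is that you optimize $z$ pointwise in $x$ to get the pointwise majorant $|f|\le c\,\Psi_r^{1-\theta}\Psi_s^{\theta}$ and then invoke the product inequality $(uv)^*(2t)\le u^*(t)v^*(t)$, whereas the paper first passes to rearrangements with $z$ fixed (using subadditivity of $(\cdot)^*$ for sums) and then chooses $z=z(t)$.
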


\begin{proof}
First we assume that the quasinorms in the right hand side of
(\ref{est_teoFF}) are finite. Otherwise, the result is trivial.

Choose a natural $m$ such that $m>r/2.$ By Lemma \ref{lem_termico}
we have
\begin{equation}\label{repres_int_f}
f(x)=\frac{(-1)^m}{(m-1)!}\int_0^\infty h^{m-1}\frac{\partial^m}{\partial h^m}P_hf(x)dh.
\end{equation}
Then, for any $z>0$ we obtain
$$
|f(x)|\le 
\frac{1}{(m-1)!}\int_0^z h^{m-1}\left|\frac{\partial^m}{\partial h^m}P_hf(x)\right|dh
$$
\begin{equation}\label{est}
+\frac{1}{(m-1)!} \int_z^\infty h^{m-1}\left|\frac{\partial^m}{\partial
h^m}P_hf(x)\right|dh.
\end{equation}
Now, we set
\begin{equation}\label{HyG}
H(x)=\sup_{h>0}h^{m-r/2}\left|\frac{\partial^m}{\partial h^m}P_hf(x)\right|,\,\,
G(x)=\sup_{h>0}h^{m-s/2}\left|\frac{\partial^m}{\partial h^m}P_hf(x)\right|.
\end{equation}
Note that $\|H\|_{p_1,q_1}= \|f\|_{\dot{F}^{r}_{p_1,q_1;\infty}}$
and $\|G\|_{p_2,q_2}= \|f\|_{\dot{F}^{s}_{p_2,q_2;\infty}}$.

Besides, by (\ref{est})
\begin{equation*}
|f(x)|\le 
\frac{1}{(m-1)!}\left(H(x)\int_0^z h^{r/2}\frac{dh}{h}+ G(x)\int_z^\infty h^{s/2}\frac{dh}{h}\right),
\end{equation*}
and, taking non-increasing rearrangements, we get
\begin{equation}\label{estHG}
f^*(2t)\le  
\frac{2}{(m-1)!}
\left(\frac{ z^{r/2}}{r}H^*(t)+ \frac{z^{s/2}}{|s|}G^*(t)\right)
\end{equation}
Now fix $t>0$. If $H^*(t)\neq 0$, we choose in (\ref{estHG})
$$z\equiv z(t)=\left(\frac{G^*(t)r}{H^*(t)|s|}\right)^{2/(r+|s|)}$$.
Then,
\begin{equation}\label{estprod}
f^*(2t)\le  
\frac{4}{(m-1)!r^{1-\theta}|s|^\theta}
H^*(t)^{1-\theta}G^*(t)^\theta.
\end{equation}
Note that if $H^*(t)= 0$, (\ref{estHG}) implies that $f^*(2t)=0$ and
(\ref{estprod}) is also true. This inequality and H\"older's
inequality lead to (\ref{est_teoFF}).
\end{proof}

\begin{rem}\label{mejora_F}
Note that $\|g\|_{p,q}=\|f\|_{\dot{F}^{0}_{p,q;l}}$,
 where $g$ denotes the function
\begin{equation}\label{funcion_g}
g(x)=\left(\int_0^\infty h^{ml}\left|\frac{\partial^m}{\partial h^m}P_hf(x)\right|^l \frac{dh}{h} \right)^{1/l},\quad 0<l<\infty
\end{equation}
Then, using this expression instead of (\ref{repres_int_f}) and following the same reasonings we have that
\begin{equation*}
\|f\|_{\dot{F}^{0}_{p,q;l}}\le 2^{1/p}\left(\frac{4}{r^{1-\theta}|s|^\theta l}\right)^{1/l}\|f\|_{\dot{F}^{r}_{p_1,q_1;\infty}}^{1-\theta} \|f\|_{\dot{F}^{s}_{p_2,q_2;\infty}}^{\theta}.
\end{equation*}
This kind of Gagliardo-Nirenberg inequality was essentially proved
by Oru \cite[p. 395]{ByM}. His approach used the representation of
the  Triebel-Lizorkin norms in terms of Littlewood-Paley
decompositions.
\end{rem}

\begin{teo}\label{teoBB}
Let $1\le p_1,p_2\le \infty$. $1\le q_1,q_2\le \infty$, $r>0$,
$s<0$. Assume the previous notation in (\ref{new_notation}) together
with $p_1\neq p_2$. Then, for any function $f\in
S_0(\mathbb{R}^n)\cap\left(L^1(\mathbb{R}^n)+L^\infty(\mathbb{R}^n)\right)$,
it holds that
\begin{equation}
\label{teobesovtermico}
\|f\|_{p,q}\le c \|f\|_{\dot{B}^{r}_{p_1,q_1}}^{1-\theta}\|f\|_{\dot{B}^{s}_{p_2,q_2}}^\theta,
\end{equation}
where
$c$ does not depend on $f$.
\end{teo}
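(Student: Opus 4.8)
The plan is to adapt the proof of Theorem \ref{teoFF}, replacing the pointwise maximal functions there by $L^{p_i}$-norms of the thermic derivatives $\partial_h^m P_hf:=\frac{\partial^m}{\partial h^m}P_hf$, and replacing the explicit optimization in the scaling parameter by an application of Lemma \ref{equilibrio}, case (i).

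We may assume both quasinorms on the right of (\ref{teobesovtermico}) are finite. Fix $m\in\NN$ with $2m>r$; since $s<0$, this $m$ is admissible for $\dot{B}^{s}_{p_2,q_2}$ as well. By Lemma \ref{lem_termico},
\begin{equation*}
f(x)=\frac{(-1)^m}{(m-1)!}\int_0^\infty h^{m-1}\,\partial_h^m P_hf(x)\,dh\qquad\text{for a.e. }x\in\R^n .
\end{equation*}
Hence, for any measurable $E\subset\R^n$ with $|E|=t$ and any $z>0$, Tonelli's theorem, (\ref{sup}) and H\"older's inequality (in the form $\int_E|g|\le t^{1-1/p_i}\|g\|_{p_i}$, read as $t\|g\|_\infty$ when $p_i=\infty$) give $\int_E|f|\le t^{1-1/p_1}\Psi_1(z)+t^{1-1/p_2}\Psi_2(z)$, where
\begin{equation*}
\Psi_1(z)=\frac{1}{(m-1)!}\int_0^z h^{m-1}\|\partial_h^m P_hf\|_{p_1}\,dh,\qquad
\Psi_2(z)=\frac{1}{(m-1)!}\int_z^\infty h^{m-1}\|\partial_h^m P_hf\|_{p_2}\,dh
\end{equation*}
(both finite for every $z>0$, by H\"older's inequality on $(0,z)$, resp. $(z,\infty)$, using $r>0>s$ and the finiteness of the Besov quasinorms). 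Taking the supremum over such $E$ yields the basic estimate
\begin{equation*}
f^{**}(t)\le t^{-1/p_1}\Psi_1(z)+t^{-1/p_2}\Psi_2(z)\qquad(z,t>0).
\end{equation*}

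Next I would put $\phi_1(z)=z^{-r/2}\Psi_1(z)$, $\phi_2(z)=z^{-s/2}\Psi_2(z)$, $\rho=r/2$, $\sigma=s/2$; then $\theta=\rho/(\rho-\sigma)=r/(r-s)$, and the exponents $p,q$ of Lemma \ref{equilibrio} coincide with those in (\ref{new_notation}). Since $\Psi_1$ is nondecreasing and $\Psi_2$ nonincreasing, $z^{\rho}\phi_1(z)=\Psi_1(z)$ increases and $z^{\sigma}\phi_2(z)=\Psi_2(z)$ decreases, so the hypotheses of case (i) of Lemma \ref{equilibrio} hold, and the basic estimate reads exactly $f^{**}(t)\le\inf_{z>0}\Phi(z,t)$ for the function $\Phi$ of that case. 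To estimate $\|\phi_1\|_{L^{q_1}(\R_+,dt/t)}$, set $\psi_1(h)=h^{m-r/2}\|\partial_h^m P_hf\|_{p_1}$, so that $\|\psi_1\|_{L^{q_1}(\R_+,dh/h)}=\|f\|_{\dot{B}^{r}_{p_1,q_1}}$ and $h^{m-1}\|\partial_h^m P_hf\|_{p_1}=h^{r/2-1}\psi_1(h)$; the substitution $h=zu$ gives $\phi_1(z)=\frac{1}{(m-1)!}\int_0^1 u^{r/2}\psi_1(zu)\,\frac{du}{u}$, and Minkowski's integral inequality on $L^{q_1}(\R_+,dt/t)$ (legitimate since $q_1\ge1$) together with the dilation invariance of $dt/t$ yields $\|\phi_1\|_{L^{q_1}(\R_+,dt/t)}\le\frac{2}{r(m-1)!}\|f\|_{\dot{B}^{r}_{p_1,q_1}}$. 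In the same way, with $\psi_2(h)=h^{m-s/2}\|\partial_h^m P_hf\|_{p_2}$ and the substitution $h=z/u$ (the integral over $(0,1)$ converging because $-s/2>0$), one gets $\|\phi_2\|_{L^{q_2}(\R_+,dt/t)}\le\frac{2}{|s|(m-1)!}\|f\|_{\dot{B}^{s}_{p_2,q_2}}$.

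Finally, Lemma \ref{equilibrio}(i) applied to $\phi_1,\phi_2$ (with $\rho=r/2$, $\sigma=s/2$) gives
\begin{equation*}
\Big\|\inf_{z>0}\Phi(z,\cdot)\Big\|_{p,q}\le c\,\|\phi_1\|_{L^{q_1}(\R_+,dt/t)}^{1-\theta}\,\|\phi_2\|_{L^{q_2}(\R_+,dt/t)}^{\theta}.
\end{equation*}
Since $f^{**}$ and $\inf_{z>0}\Phi(z,\cdot)$ are nonincreasing on $\R_+$ and $f^*\le f^{**}\le\inf_{z>0}\Phi(z,\cdot)$, we conclude $\|f\|_{p,q}=\|f^*\|_{p,q}\le\|\inf_{z>0}\Phi(z,\cdot)\|_{p,q}$; combined with the two estimates above, this proves (\ref{teobesovtermico}) (and in particular $f\in L^{p,q}(\R^n)$). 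The one delicate step is the passage from the basic estimate to a form in which Lemma \ref{equilibrio} applies directly: the raw integrands $h^{m-r/2}\|\partial_h^m P_hf\|_{p_1}$ and $h^{m-s/2}\|\partial_h^m P_hf\|_{p_2}$ need not be monotone, so one must pass to the averaged functions $\phi_1,\phi_2$, whose required monotonicity is automatic, and recover the Besov quasinorms through the Hardy (Minkowski) inequality on $L^{q_i}(\R_+,dt/t)$ — which is precisely where the hypothesis $q_i\ge1$ is used. Everything else is routine bookkeeping.
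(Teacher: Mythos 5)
Your proof is correct and follows essentially the same route as the paper: the integral representation from Lemma \ref{lem_termico}, the split at $z$, the same averaged functions $\phi_1,\phi_2$ bounded via Hardy's inequality, and Lemma \ref{equilibrio}(i). The only cosmetic difference is that you bound $f^{**}(t)$ directly through the sup-over-sets characterization and H\"older, where the paper bounds $f^*(2t)$ via subadditivity of the rearrangement, weak-type estimates and Minkowski's integral inequality — the two steps are interchangeable here.
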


\begin{proof}
As in the previous theorem, we assume that the quasinorms in the
right hand side of (\ref{teobesovtermico}) are finite, we choose a
natural $m$ such that $m>r/2$, and apply Lemma
\ref{lem_termico}. Then, for any $z>0$, we obtain estimate
(\ref{est}). Thus, taking rearrangements we get
\begin{equation*}
(m-1)!f^*(2t)\le
\end{equation*}
\begin{equation}\label{estreor}
\left(\int_0^z h^{m-1}\frac{\partial^m}{\partial h^m}P_hf(\cdot) dh\right)^*\left(t\right)+
\left(\int_z^\infty h^{m-1}\frac{\partial^m}{\partial h^m}P_hf(\cdot) dh\right)^*\left(t\right).
\end{equation}
Now, applying weak inequalities and Minkowski integral inequality,
we get
\begin{equation*}
(m-1)!f^*(2t)\le
\end{equation*}
\begin{equation*}
 t^{-1/p_1}\left\|\int_0^z h^{m-1}\frac{\partial^m}{\partial h^m}P_hf(\cdot) dh\right\|_{p_1}+ t^{-1/p_2}\left\|\int_z^\infty h^{m-1}\frac{\partial^m}{\partial h^m}P_hf(\cdot) dh\right\|_{p_2}
\end{equation*}
\begin{equation*}
\le t^{-1/p_1}\int_0^z h^{m}\left\|\frac{\partial^m}{\partial h^m}P_hf\right\|_{p_1} \frac{dh}{h}+t^{-1/p_2}\int_z^\infty h^{m}\left\|\frac{\partial^m}{\partial h^m}P_hf\right\|_{p_2}\frac{dh}{h}
\end{equation*}
\begin{equation}\label{estBB}
=t^{-1/p_1}z^{r/2}\phi_1(z)+t^{-1/p_2}z^{-|s|/2}\phi_2(z),
\end{equation}
where
\begin{equation}\label{def_of_phi_1(z)}
\phi_1(z)=z^{-r/2}\int_0^z h^{m}\left\|\frac{\partial^m}{\partial h^m}P_hf\right\|_{p_1} \frac{dh}{h}
\end{equation}
and
\begin{equation}\label{def_of_phi_2(z)}
\phi_2(z)=z^{|s|/2} \int_z^\infty h^{m}\left\|\frac{\partial^m}{\partial h^m}P_hf\right\|_{p_2}\frac{dh}{h}.
\end{equation}
Note that $\phi_1(z)z^{r/2}$ is increasing and $\phi_2(z)z^{-|s|/2}$
is decreasing on $\R_+$. By Hardy's inequality \cite[p.124]{BSh} it
holds that
\begin{equation}\label{J}
\left(\int_0^\infty \phi_1(z)^{q_1}\frac{dz}{z}\right)^{1/q_1}\le \frac{2}{r}\|f\|_{\dot{B}^{r}_{p_1,q_1}},\quad 1\le q_1<\infty
\end{equation}
and
\begin{equation}\label{I}
\left(\int_0^\infty \phi_2(z)^{q_2}\frac{dz}{z}\right)^{1/q_2}\le \frac{2}{|s|}\|f\|_{\dot{B}^{s}_{p_2,q_2}},\quad 1\le q_2<\infty.
\end{equation}
Note that the last two inequalities are still valid, with the usual
mo\-di\-fi\-ca\-tions, if $q_1=\infty$ or $q_2=\infty$. We can apply
Lemma \ref{equilibrio} (\textit{i}) to (\ref{estBB}). We use also
(\ref{J}) and (\ref{I}) to get (\ref{teobesovtermico}).
\end{proof}

\begin{rem}
Note that, since in the proof we use weak inequalities, the
$L^{p_1}$, $L^{p_2}-$norms taken in the  Besov norms in
(\ref{teobesovtermico}) can be  replaced by the smaller
Marcinkiewicz norms $L(p_1,\infty)$, $L(p_2,\infty)$.
\end{rem}

\begin{rem}
Let $r>0$, $1\le q\le \infty$, $1\le p<n/r$. Let $f\in
B_{p,q}^r(\mathbb{R}^n)$ and $p^*=np/(n-r p)$. As it was mentioned
in Introduction,  Theorem \ref{teoBB} implies inequality
\begin{equation*}
\|f\|_{p^*,q}\le c\|f\|_{\dot{B}^r_{p,q}}^{1-r p/n}\|f\|_{\dot{B}^{r-n/p}_{\infty,q}}^{r p/n}
\end{equation*}
(proved in \cite{BC} for $p=q$). By (\ref{embed1}), this gives a
refinement of the inequality (\ref{Herz-Peetre}).
\end{rem}

\begin{rem}
Applying H\"older inequality, it is easy to see
$$\|f\|_{\dot{B}^0_{p,q}}\le\|f\|_{\dot{B}^{r}_{p_1,q_1}}^{1-\theta}\|f\|_{\dot{B}^{s}_{p_2,q_2}}^\theta.$$
If $q=p>2$, this inequality is weaker than (\ref{teobesovtermico}),
because it can be proved that in this case $L^{p}\subset
\dot{B}^0_{p,p}$ with proper inclusion \cite[p. 47, Proposition 2,
iii) and p.242, Theorem 1, ii)]{Tri}.
\end{rem}

\begin{rem}
Theorem \ref{teoBB} also admits a counterpart of Remark
\ref{mejora_F}. Thus, if $0<l\le 1$ we can follow the steps of the
proof of Theorem \ref{teoBB} with the function $g$ defined at
(\ref{funcion_g}), obtaining
\begin{equation*}
\|f\|_{\dot{F}^0_{p,q;l}}\le c \|f\|_{\dot{B}^{r}_{p_1,q_1}}^{1-\theta}\|f\|_{\dot{B}^{s}_{p_2,q_2}}^\theta.
\end{equation*}
\end{rem}

\begin{teo}\label{tFB}
Let $0<p_1<\infty$ and $1\le p_2\le \infty$. Let $0< q_1\le
\infty$, $1\le q_2\le \infty$, $r>0$, $s<0$. Assume also the
notation in (\ref{new_notation}) and $p_1\neq p_2$. Then, for any $f\in
S_0(\mathbb{R}^n)\cap\left(L^1(\mathbb{R}^n)+L^\infty(\mathbb{R}^n)\right)$,
it holds that
\begin{equation}
\label{teoFB}
\|f\|_{p,q}\le c \|f\|_{\dot{F}^{r}_{p_1,q_1;\infty}}^{1-\theta}\|f\|_{\dot{B}^{s}_{p_2,q_2}}^\theta,
\end{equation}
where
$c$ does not depend on $f$.
\end{teo}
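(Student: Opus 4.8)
The plan is to run the scheme of Theorems \ref{teoFF} and \ref{teoBB} simultaneously: treat the Triebel--Lizorkin factor by a pointwise (in $x$) estimate, treat the Besov factor by an $L^{p_2}$-mean estimate after taking rearrangements, and then recognize the resulting two-parameter bound as an instance of case (ii) of Lemma \ref{equilibrio}. As in those theorems, we may assume both quasinorms on the right of (\ref{teoFB}) are finite (otherwise there is nothing to prove).

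Fix a natural number $m>r/2$ and use the thermic representation of Lemma \ref{lem_termico}, $f(x)=\frac{(-1)^m}{(m-1)!}\int_0^\infty h^{m-1}\frac{\partial^m}{\partial h^m}P_hf(x)\,dh$. For an arbitrary $z>0$ split the integral at $h=z$. On $(0,z)$ I estimate pointwise exactly as in Theorem \ref{teoFF}: with $H(x)=\sup_{h>0}h^{m-r/2}\big|\frac{\partial^m}{\partial h^m}P_hf(x)\big|$ one gets $\int_0^z h^{m-1}\big|\frac{\partial^m}{\partial h^m}P_hf(x)\big|\,dh\le \frac{2}{r}z^{r/2}H(x)$, and by the definition of the Triebel--Lizorkin--Lorentz quasinorm $\|H\|_{p_1,q_1}=\|f\|_{\dot F^{r}_{p_1,q_1;\infty}}$. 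On $(z,\infty)$ I proceed as in Theorem \ref{teoBB}: after taking non-increasing rearrangements, the weak-type inequality $g^*(t)\le t^{-1/p_2}\|g\|_{p_2}$ together with Minkowski's integral inequality (here $p_2\ge1$ is used) gives
\begin{equation*}
\Big(\int_z^\infty h^{m-1}\tfrac{\partial^m}{\partial h^m}P_hf(\cdot)\,dh\Big)^*(t)\le t^{-1/p_2}\int_z^\infty h^{m}\Big\|\tfrac{\partial^m}{\partial h^m}P_hf\Big\|_{p_2}\tfrac{dh}{h}=t^{-1/p_2}z^{s/2}\phi_2(z),
\end{equation*}
where $\phi_2$ is precisely the function from the proof of Theorem \ref{teoBB}; Hardy's inequality yields $\|\phi_2\|_{L^{q_2}(\R_+,dt/t)}\le\frac{2}{|s|}\|f\|_{\dot B^{s}_{p_2,q_2}}$ (valid for $1\le q_2\le\infty$), and $t^{s/2}\phi_2(t)$ is non-increasing on $\R_+$. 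Using subadditivity of rearrangements, I obtain, for all $z,t>0$,
\begin{equation*}
f^*(2t)\le c_m\Big[t^{-1/p_1}\phi_1(t)\,z^{r/2}+t^{-1/p_2}z^{s/2}\phi_2(z)\Big],\qquad \phi_1(t):=t^{1/p_1}H^*(t).
\end{equation*}

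The decisive point is that, after taking $\inf_{z>0}$, the right-hand side is exactly the function $\Phi$ of case (ii) of Lemma \ref{equilibrio} with $\rho=r/2>0$, $\sigma=s/2<0$, the given indices $p_1,p_2,q_1,q_2$, and $\theta=r/(r-s)$. Indeed $\phi_1\in L^{q_1}(\R_+,dt/t)$ with $\|\phi_1\|_{L^{q_1}(\R_+,dt/t)}=\|H\|_{p_1,q_1}=\|f\|_{\dot F^{r}_{p_1,q_1;\infty}}$, and the two monotonicity hypotheses of that lemma hold automatically: $t^{-1/p_1}\phi_1(t)=H^*(t)$ is non-increasing and $t^{\sigma}\phi_2(t)=t^{s/2}\phi_2(t)$ is non-increasing. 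No smoothness of $H^*$ is required, since the regularization (via Lemma \ref{masmenosdelta}) is performed inside the proof of Lemma \ref{equilibrio}. Applying Lemma \ref{equilibrio}(ii) and the scaling identity $\|f^*(2\cdot)\|_{p,q}=2^{-1/p}\|f\|_{p,q}$ gives
\begin{equation*}
\|f\|_{p,q}\le c\,\|\phi_1\|_{L^{q_1}(\R_+,dt/t)}^{1-\theta}\|\phi_2\|_{L^{q_2}(\R_+,dt/t)}^{\theta}\le c\,\|f\|_{\dot F^{r}_{p_1,q_1;\infty}}^{1-\theta}\|f\|_{\dot B^{s}_{p_2,q_2}}^{\theta},
\end{equation*}
which is (\ref{teoFB}).

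I expect the only genuinely delicate part to be the bookkeeping in the combination step: one must check that the pointwise bound on $(0,z)$ and the $L^{p_2}$-mean bound on $(z,\infty)$ really assemble, after rearrangement, into a single expression of the precise algebraic shape required by Lemma \ref{equilibrio}(ii) (in particular that the non-trivial factor on the $\dot F$-side ends up evaluated at the outer variable $t$ through $\phi_1(t)=t^{1/p_1}H^*(t)$, while the non-trivial factor on the $\dot B$-side is evaluated at the infimized variable $z$), and that the two monotonicity conditions listed above indeed hold. The degenerate situations ($H^*\equiv0$, which forces $f\equiv0$ through the thermic representation, or $\phi_1$ vanishing on an interval) are either trivial or already absorbed by Lemma \ref{equilibrio}; the numerical constants $(m-1)!$, $2/r$, $2/|s|$ are harmless.
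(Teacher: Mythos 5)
Your proof is correct and follows essentially the same route as the paper: the paper likewise estimates the $(0,z)$ piece pointwise via the sup-function $H$ (as in Theorem \ref{teoFF}), the $(z,\infty)$ piece via the weak-type bound and Minkowski's inequality (as in Theorem \ref{teoBB}), and then invokes Lemma \ref{equilibrio}(ii) with $\phi_1(t)=\tfrac{2}{r}t^{1/p_1}H^*(t)$ and the same $\phi_2$. The bookkeeping you flag (which factor is evaluated at $t$ and which at $z$, and the two monotonicity conditions) is exactly as in the paper's argument.
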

\begin{proof}
The proof is a mix of the proofs of Theorems \ref{teoFF} and
\ref{teoBB}. We obtain again (\ref{estreor}). The first addend is
estimated like in (\ref{estHG}) and the second one like
(\ref{estBB}). Then we obtain for any $z>0$
\begin{equation*}
(m-1)!f^*(2t)\le t^{-1/p_1}z^{r/2}\phi_1(t)+t^{-1/p_2}z^{-|s|/2}\phi_2(z).
\end{equation*}
Remember that $\phi_2(z)$, defined in (\ref{def_of_phi_2(z)}),
satisfies (\ref{I}) and $\phi_2(z)z^{-|s|/2}$ decreases in $z$.
Besides $\phi_1(t)=2t^{1/p_1}H^*(t)/r\in L^{q_1}(\R_+,dt/t)$, where
$H$ is defined at (\ref{HyG}) and thus,
$\|\phi_1\|_{L^{q_1}(\R_+,dt/t)}=\frac{2}{r}\|f\|_{\dot{F}^{r}_{p_1,q_1;\infty}}$.
It only remains to apply Lemma \ref{equilibrio} (\textit{ii}) and
(\ref{teoFB}) follows.
\end{proof}

\begin{rem}

It is well known that if $1<p<\infty$, then the Triebel-Lizorkin
norms are equivalent to the Sobolev norms. That is:
$\|\cdot\|_{p}\sim \|\cdot\|_{\dot{F}^0_{p;2}}$,
$\|\cdot\|_{\dot{W}_p^r}\sim \|\cdot\|_{\dot{F}^r_{p;2}}$ (see
\cite[p.242, Theorem 1]{Tri}). Therefore, our main result (Theorem
\ref{Sobolev1}) could be seen as a consequence of Theorem \ref{tFB}
in some particular cases. However, it is impossible in the important
case $p=1.$ Besides, we  consider the more general Lorentz
quasinorms rather that $L^p$ norms. This is motivated since in some
contexts in the study of Sobolev inequalities the $L^p$ norms of the
derivatives have revealed not to be a enough precise scale.
Frequently they are substituted for the more precise Lorentz scale
$L^{p,q}$.

The Triebel-Lizorkin-Lorentz spaces are described in terms of
Littlewood-Paley decompositions. It can be proved also \cite[Theorem
5]{YCP} that $\|\cdot\|_{p,q}\sim \|\cdot\|_{\dot{F}^0_{p,q;2}}$
($1<p<\infty$, $0\le q\le \infty$) and using a Bernstein type
inequality $\|\cdot\|_{\dot{W}_{p,q}^r}\sim
\|\cdot\|_{\dot{F}^r_{p,q;2}}$ (similar to \cite[Lemma 2.3]{ZWT}).
But we have to take into account that in our Theorem \ref{tFB} we
are using a thermic definition of the Triebel-Lizorkin-Lorentz
spaces. Although it is reasonable to figure out that the thermic and
Littlewood-Paley definitions are equivalent (following the methods
in \cite{Tri2}), this equivalence is not proved in the literature
and is out of the objectives of this paper.

On the other hand, note that the constant in
$\|\cdot\|_{\dot{F}^{r}_{p_1,p_1;2}}\le c
\|\cdot\|_{\dot{W}^{r}_{p_1,p_1}}$ explodes when $p_1\to 1$, and
therefore Theorem \ref{Sobolev1} can not be derived as a consequence
of Theorem \ref{tFB}.
\end{rem}

\begin{teo}\label{last}
Let $1\le p_1\le \infty$ and $0<p_2< \infty$. Let $1\le q_1\le
\infty$, $0< q_2\le \infty$, $r>0$, $s<0$. Assume also the notation
in (\ref{new_notation}) and $p_1\neq p_2$. Then, for any $f\in
S_0(\mathbb{R}^n)\cap\left(L^1(\mathbb{R}^n)+L^\infty(\mathbb{R}^n)\right)$,
it holds that
\begin{equation}
\label{teoBF}
\|f\|_{p,q}\le c \|f\|_{\dot{B}^{r}_{p_1,q_1}}^{1-\theta}\|f\|_{\dot{F}^{s}_{p_2,q_2;\infty}}^\theta,
\end{equation}
where
$c$ does not depend on $f$.
\end{teo}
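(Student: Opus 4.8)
The plan is to mirror the proof of Theorem \ref{tFB}, simply interchanging the roles of the two factors: here the ``smooth'' factor is a Besov quasinorm and the ``negative-smoothness'' factor is a Triebel-Lizorkin-Lorentz quasinorm, which is the opposite assignment to Theorem \ref{tFB}. First I would choose a natural number $m$ with $m>r/2$ (so also $2m>r>s$, which is all that is needed for both quasinorms), and apply Lemma \ref{lem_termico} to get the representation
$$
f(x)=\frac{(-1)^m}{(m-1)!}\int_0^\infty h^{m-1}\frac{\partial^m}{\partial h^m}P_hf(x)\,dh .
$$
Splitting the integral at an arbitrary $z>0$ into $\int_0^z$ and $\int_z^\infty$, taking non-increasing rearrangements, and using subadditivity of $g\mapsto g^*(2\cdot)$ gives, exactly as in \eqref{estreor},
$$
(m-1)!\,f^*(2t)\le\left(\int_0^z h^{m-1}\tfrac{\partial^m}{\partial h^m}P_hf(\cdot)\,dh\right)^*(t)+\left(\int_z^\infty h^{m-1}\tfrac{\partial^m}{\partial h^m}P_hf(\cdot)\,dh\right)^*(t).
$$

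Next I would estimate the two addends differently. For the first (the ``$\int_0^z$'' piece, which carries the Besov norm with positive smoothness $r$), I would apply the weak-type bound $g^*(t)\le t^{-1/p_1}\|g\|_{p_1}$ and Minkowski's integral inequality, precisely as in the derivation of \eqref{estBB}, to get the bound $t^{-1/p_1}z^{r/2}\phi_1(z)$ with
$$
\phi_1(z)=z^{-r/2}\int_0^z h^{m}\left\|\tfrac{\partial^m}{\partial h^m}P_hf\right\|_{p_1}\frac{dh}{h},
$$
so that $\phi_1(z)z^{r/2}$ increases and, by Hardy's inequality, $\|\phi_1\|_{L^{q_1}(\R_+,dt/t)}\le\frac{2}{r}\|f\|_{\dot B^r_{p_1,q_1}}$. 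For the second (the ``$\int_z^\infty$'' piece, which should produce the Triebel-Lizorkin-Lorentz factor $\dot F^{s}_{p_2,q_2;\infty}$), I would introduce, as in \eqref{HyG},
$$
G(x)=\sup_{h>0}h^{m-s/2}\left|\tfrac{\partial^m}{\partial h^m}P_hf(x)\right|,
$$
so that $h^{m-1}|\partial^m_h P_hf(x)|\le h^{s/2-1}G(x)$ and hence $\int_z^\infty h^{m-1}|\partial^m_h P_hf(x)|\,dh\le \frac{z^{s/2}}{|s|}G(x)$; taking rearrangements bounds this addend by $\frac{1}{|s|}z^{s/2}G^*(t)$, and $\|G\|_{p_2,q_2}=\|f\|_{\dot F^{s}_{p_2,q_2;\infty}}$. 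Writing $\phi_2(z)=z^{s/2}G^*(z)/|s|$ up to the rearrangement variable, one has $\phi_2(z)z^{-|s|/2}$ decreasing. Combining,
$$
(m-1)!\,f^*(2t)\le c\left(t^{-1/p_1}z^{r/2}\phi_1(z)+t^{-1/p_2}z^{-|s|/2}\tilde\phi_2(t)\right)\quad\text{for all }z>0,
$$
where now it is the {\em second} factor that is frozen at the rearrangement variable $t$ rather than optimized in $z$. This is exactly the configuration of case (iii) of Lemma \ref{equilibrio} (with $\rho=r/2$, $\sigma=s/2$ up to the trivial rescaling of the smoothness parameters, $t^{\rho}\phi_1$ increasing and $t^{-1/p_2}$-type factor decreasing), so applying Lemma \ref{equilibrio}(iii) together with the two norm bounds yields \eqref{teoBF}. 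One should note that $p_1\ne p_2$ is what makes $d=1/p_1-1/p_2\ne 0$ and lets Lemma \ref{zetadet} (inside Lemma \ref{equilibrio}) run, and that the hypotheses $1\le p_1\le\infty$, $1\le q_1\le\infty$ are exactly what is needed for Minkowski and Hardy on the Besov side, while $0<p_2<\infty$, $0<q_2\le\infty$ suffice on the $\dot F$ side since there we only use the pointwise sup and the rearrangement inequality.

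The routine adaptations aside, the only place demanding care is checking that the split-and-estimate procedure produces precisely the monotonicity pattern of case (iii) of Lemma \ref{equilibrio} — i.e.\ that after the two estimates the $z$-dependence sits as $z^{r/2}\phi_1(z)$ with $z^{r/2}\phi_1(z)$ increasing on the Besov side and as a {\em plain} power $z^{-|s|/2}$ multiplying a $t$-dependent (hence $z$-independent) factor on the $\dot F$ side, and that the coefficients $t^{-1/p_i}$ appear with the right sign of monotonicity. Once that bookkeeping is in place the conclusion is immediate from Lemma \ref{equilibrio}(iii); the proof is otherwise a direct transcription of the proof of Theorem \ref{tFB} with the two factors swapped, so it can be written quite briefly.

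\begin{proof}
The proof is a mix of the proofs of Theorems \ref{teoFF} and \ref{teoBB}, analogous to that of Theorem \ref{tFB}, with the roles of the two quasinorms interchanged. As before we may assume the quasinorms on the right-hand side of \eqref{teoBF} are finite. Choose a natural number $m$ with $m>r/2$ and apply Lemma \ref{lem_termico}. Splitting the resulting integral at $z>0$ and taking rearrangements gives \eqref{estreor}. We estimate the first addend as in the proof of Theorem \ref{teoBB}, obtaining the bound $t^{-1/p_1}z^{r/2}\phi_1(z)$ with $\phi_1$ as in \eqref{def_of_phi_1(z)}; then $\phi_1(z)z^{r/2}$ increases and, by Hardy's inequality \eqref{J}, $\|\phi_1\|_{L^{q_1}(\R_+,dt/t)}\le\frac{2}{r}\|f\|_{\dot B^r_{p_1,q_1}}$. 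We estimate the second addend as in the proof of Theorem \ref{teoFF}: setting
$$
G(x)=\sup_{h>0}h^{m-s/2}\left|\frac{\partial^m}{\partial h^m}P_hf(x)\right|,
$$
we have $\|G\|_{p_2,q_2}=\|f\|_{\dot F^{s}_{p_2,q_2;\infty}}$ and, by the weak-type inequality,
$$
\left(\int_z^\infty h^{m-1}\frac{\partial^m}{\partial h^m}P_hf(\cdot)\,dh\right)^*(t)\le\int_z^\infty h^{m-1-s/2}\,dh\;G^*(t)=\frac{z^{s/2}}{|s|}G^*(t).
$$
Hence, for any $z>0$,
$$
(m-1)!\,f^*(2t)\le t^{-1/p_1}z^{r/2}\phi_1(z)+\frac{1}{|s|}t^{-1/p_2}z^{-|s|/2}\psi(t),
$$
where $\psi(t)=t^{1/p_2}G^*(t)$, so $\psi\in L^{q_2}(\R_+,dt/t)$ with $\|\psi\|_{L^{q_2}(\R_+,dt/t)}=\|G\|_{p_2,q_2}=\|f\|_{\dot F^{s}_{p_2,q_2;\infty}}$, and $t^{-1/p_2}\psi(t)$ need not be monotone but enters as in case (iii) of Lemma \ref{equilibrio}. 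Since $\phi_1(z)z^{r/2}$ increases and $p_1\ne p_2$, an application of Lemma \ref{equilibrio} (iii) (with $\rho=r$, $\sigma=s$, the powers $z^{r/2}$, $z^{s/2}$ absorbed by rescaling) together with the two norm estimates yields \eqref{teoBF}.
\end{proof}
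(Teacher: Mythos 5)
Your proposal is correct and follows exactly the paper's argument: estimate the $\int_0^z$ addend of \eqref{estreor} as in Theorem \ref{teoBB} to get $t^{-1/p_1}z^{r/2}\phi_1(z)$ with $\phi_1$ from \eqref{def_of_phi_1(z)}, estimate the $\int_z^\infty$ addend as in Theorem \ref{teoFF} via $G$ from \eqref{HyG}, and conclude with Lemma \ref{equilibrio}(iii). One small correction: your remark that ``$t^{-1/p_2}\psi(t)$ need not be monotone'' is off --- case (iii) of Lemma \ref{equilibrio} actually requires $t^{-1/p_2}\phi_2(t)$ to decrease, and this does hold here since $t^{-1/p_2}\psi(t)=G^*(t)$ is non-increasing by definition of the rearrangement.
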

\begin{proof}
For proving this theorem we estimate the first addend in
(\ref{estreor}) as in Theorem \ref{teoBB} and the second as in
Theorem \ref{teoFF}. That is,
\begin{equation*}
(m-1)!f^*(2t)\le t^{-1/p_1}z^{r/2}\phi_1(z)+t^{-1/p_2}z^{-|s|/2}\phi_2(t),
\end{equation*}
where $\phi_1(z)$ is defined at (\ref{def_of_phi_1(z)}),
$\phi_1(z)z^{r/2}$ increases and (\ref{J}) holds.
$\phi_2(t)=2t^{1/p_2}G^*(t)/|s|\in L^{q_2}(\R_+,dt/t)$, where $G$ is
defined in (\ref{HyG}). (\ref{teoBF}) follows from Lemma
\ref{equilibrio} (\textit{iii}).
\end{proof}

\begin{rem}
Let us note that the ``constants'' $c$ appearing in
(\ref{est_teoFF}), (\ref{teobesovtermico}), (\ref{teoFB}) and (\ref{teoBF})
depend on the integer number $m$ chosen in the definition of the
Triebel-Lizorkin and Besov quasinorms. The constants can be computed
explicitly, but the expressions are not friendly. Here we only
remark that they explode when $r$ or $s$ tend to zero. Theorems
\ref{teoBB}, \ref{tFB} and \ref{last} use Lemma \ref{equilibrio},
hence the constants also explode when $1/p_1-1/p_2\to 0$.
\end{rem}

\begin{rem}\label{remark_wadade} As it was mentioned in the Introduction, as a consequence of theorems \ref{last} and \ref{teoFF} can be obtained limiting cases of Gagliardo-Nirenberg inequalities similar to those in \cite{Wa}.
To be more concrete, let $1<p<q<\infty$, $0<r,\rho <\infty$. Choose
$\max\{1,q-p,r,\rho\}< p_1<\infty$. From Theorem \ref{last} we have
\begin{equation*}
\|f\|_q\le c \|f\|_{\dot{B}^{n/p_1}_{p_1,p_1}}^{1-p/q}\|f\|_{\dot{F}^{n(1-q/p)/p_1}_{pp_1/(p_1+p-q),\infty}}^{p/q}.
\end{equation*}
Now, by (\ref{embed1}) and (\ref{second_index}),
\begin{equation*}
\|f\|_{\dot{B}^{n/p_1}_{p_1,p_1}}\le c \|f\|_{\dot{B}^{n/r}_{r,p_1}}\le c \|f\|_{\dot{B}^{n/r}_{r,\rho}}.
\end{equation*}
Besides, using well known embeddings (cf. \cite[2.7.1, p.47
Proposition 2. i), p.242 Theorem 1]{Tri})
\begin{equation}\label{embedding_last_remark}
\|f\|_{\dot{F}^{n(1-q/p)/p_1}_{pp_1/(p_1+p-q),\infty}}\le c \|f\|_{\dot{F}^{0}_{p,\infty}}\le c \|f\|_{\dot{F}^{0}_{p,2}}\le c\|f\|_p.
\end{equation}
Putting together the three last inequalities, (\ref{wadade1})
immediately follows.

To prove (\ref{wadade2}) we proceed in the same way, now using
Theorem \ref{teoFF}:
\begin{equation*}
\|f\|_q\le c \|f\|_{\dot{F}^{n/p_1}_{p_1,\infty}}^{1-p/q}\|f\|_{\dot{F}^{n(1-q/p)/p_1}_{pp_1/(p_1+p-q),\infty}}^{p/q}.
\end{equation*}
It also holds that (see \cite[2.7.1]{Tri})
\begin{equation*}
\|f\|_{\dot{F}^{n/p_1}_{p_1,\infty}}\le c \|f\|_{\dot{F}^{n/r}_{r,\infty}}
\end{equation*}
Finally, the two last inequalities and again
(\ref{embedding_last_remark}) imply (\ref{wadade2}).

Let us note that in this remark we are using equivalences of the quasinorms defined in terms of the Gauss-Weierstrass semigroup and of those defined in terms of Littlewood-Paley decompositions. It is known that these equivalences hold modulo polynomials (cf. \cite{Tri2}). However, it is not necessary to consider (\ref{wadade1}) and (\ref{wadade2}) modulo polynomials, since we assume that $f\in L^p(\mathbb{R}^n)$.
\end{rem}

\end{document}